\documentclass[a4paper,11pt]{article} 				

\usepackage{amsmath,amsfonts,amsthm,amssymb} 			
\usepackage{geometry}							
\usepackage{fullpage}
\usepackage{comment}
\usepackage{color}


\theoremstyle{plain}										
\newtheorem{thm}{Theorem}[section]
\newtheorem{lem}[thm]{Lemma}
\newtheorem{prop}[thm]{Proposition}

\theoremstyle{definition}
\newtheorem{defn}{Definition}[section]

\newtheorem*{app}{Application}
\newtheorem{assumption}{Assumption}[section]

\theoremstyle{remark}
\newtheorem{rem}{Remark}[section]

\usepackage{graphicx}									
\usepackage{caption}
\usepackage{subcaption}
\usepackage{array}
													
\numberwithin{equation}{section} 							
\numberwithin{figure}{section} 								
\numberwithin{table}{section} 								

%
%
\usepackage{todonotes}
\newcommand\W[1]{{\color{black}{#1}}} 


\title{\normalfont \Large Two-Stroke Relaxation Oscillators}
\author{\large Samuel Jelbart \& Martin Wechselberger\\ School of Mathematics \& Statistics, University of Sydney,\\ Camperdown, NSW 2006, Australia}

\begin{document}
\maketitle


\begin{abstract}
{\em Two-stroke relaxation oscillations} consist of two distinct phases per cycle -- one slow and one fast -- which distinguishes them from the well-known van der Pol-type `four-stroke' relaxation oscillations. \W{This type of oscillation} can be found in singular perturbation problems in {\em non-standard form}, where the slow-fast timescale splitting is not necessarily reflected in a slow-fast variable splitting. \textcolor{black}{The existing literature on such non-standard problems has developed primarily through applications -- we compliment this by providing a general} framework for the application of geometric singular perturbation theory \W{in this non-standard setting} \textcolor{black}{and illustrate its applicability by proving existence and uniqueness results on a general class of two-stroke relaxation oscillators. We apply this non-standard geometric singular perturbation toolbox to a collection of examples arising in the dynamics of nonlinear transistors and models for mechanical oscillators with friction.}
\end{abstract}


\section{Introduction}\label{IntroSec}

The term {\em relaxation oscillation} was coined by Balthasar van der Pol in the 1920s \cite{vdP1920,vdP1926} to distinguish nonlinear  from harmonic oscillations observed in electronic circuits. He also derived a \textcolor{black}{prototypical} mathematical model, the now well-known {\em van der Pol (vdP) oscillator model},
\begin{equation}\label{VdPType}
\ddot{x}+r_{vdp}(x)\dot{x}+x=0
\end{equation}
with nonlinear differential `resistance' 
\begin{equation}
r_{vdp}(x)= \mu(x^2-1)\,,
\end{equation}
where $\mu\ge 0$ denotes the main system parameter that measures the ratio of the two characteristic timescales of the electronic circuit model under study. The overdot denotes differentiation with respect to time $\tau$. This oscillator model can be recast as a dynamical system (in Li\'{e}nard form),
\begin{align}\label{VdPDyn}
\begin{array}{lcl}
\dot{\bar y}=-x, \\
\dot{x}=\bar y-R_{vdp}(x),
\end{array}
\end{align}
with 
\begin{equation}\label{VdPChar}
R_{vdp}(x)=\mu\bigg(\frac{x^3}{3}-x\bigg)\,,
\end{equation}
i.e.~$R'_{vdp}(x)=r_{vdp}(x)$. The function $R_{vdp}(x)$ denotes the (non-linear) {\em characteristic} of the oscillator model which takes the form of a cubic. 
\begin{rem}\label{EMCharRem} 
The \textcolor{black}{(dimensionless)} {\em current-voltage (I-V) characteristic} \eqref{VdPChar} derives from the presence of a tunnel diode (an active, non-linear element). The requirement that the characteristic has negative slope $R'_{vdp}(x)=r_{vdp}(x)<0$ in parts of the phase space, here for $|x|<1$, gives rise to an effective negative resistance $r_{vdp}(x)$ that allows energy to be pumped back (relaxed) into the system which is necessary for oscillatory behaviour.
\end{rem}

We are interested in the relaxation case $\mu\gg1$ of the vdP oscillator model \eqref{VdPDyn} and define a new variable $y= \bar y/\mu$, change to a new (fast) timescale $dt=\mu d\tau$, and set $\epsilon:=1/\mu^2\ll 1$ to obtain 
\begin{align}\label{VdPDyn2}
\begin{array}{lcl}
x'=y+x-\frac{x^3}{3}, \\
y'=-\epsilon x,
\end{array}
\end{align}

where the dash notation denotes differentiation with respect to the new (fast) time $t$.
\begin{figure}[t]
\captionsetup{format=plain}
\centering
\begin{subfigure}{.5\textwidth}
  \centering
  \includegraphics[trim={2.5cm 8.5cm 0 8.5cm},scale=0.5]{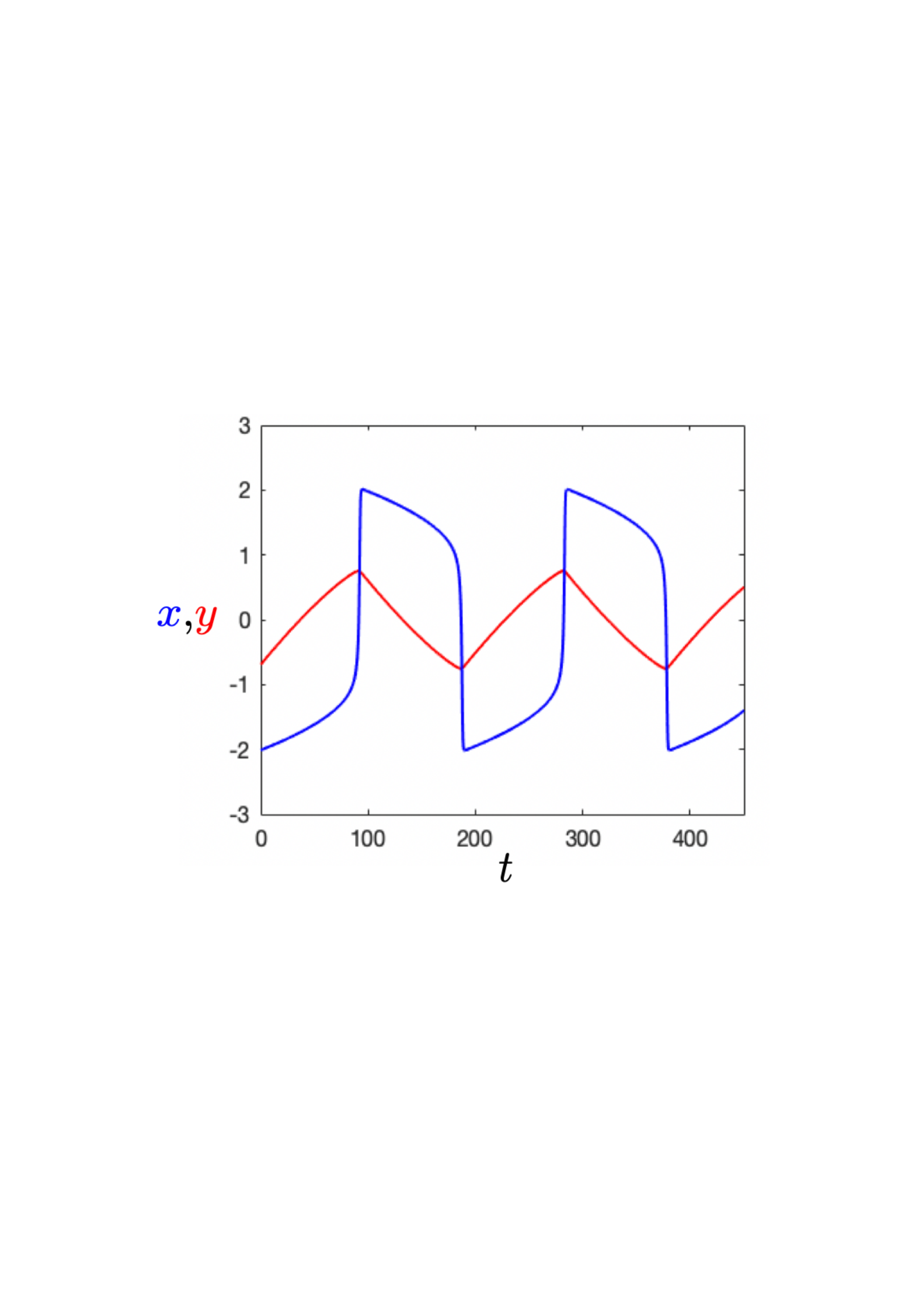}
  \caption{Time traces.}
  \label{vdPTraceFig}
\end{subfigure}%
\begin{subfigure}{.5\textwidth}
  \centering
  \includegraphics[trim={8cm 4.2cm 0 4.2cm},scale=0.5]{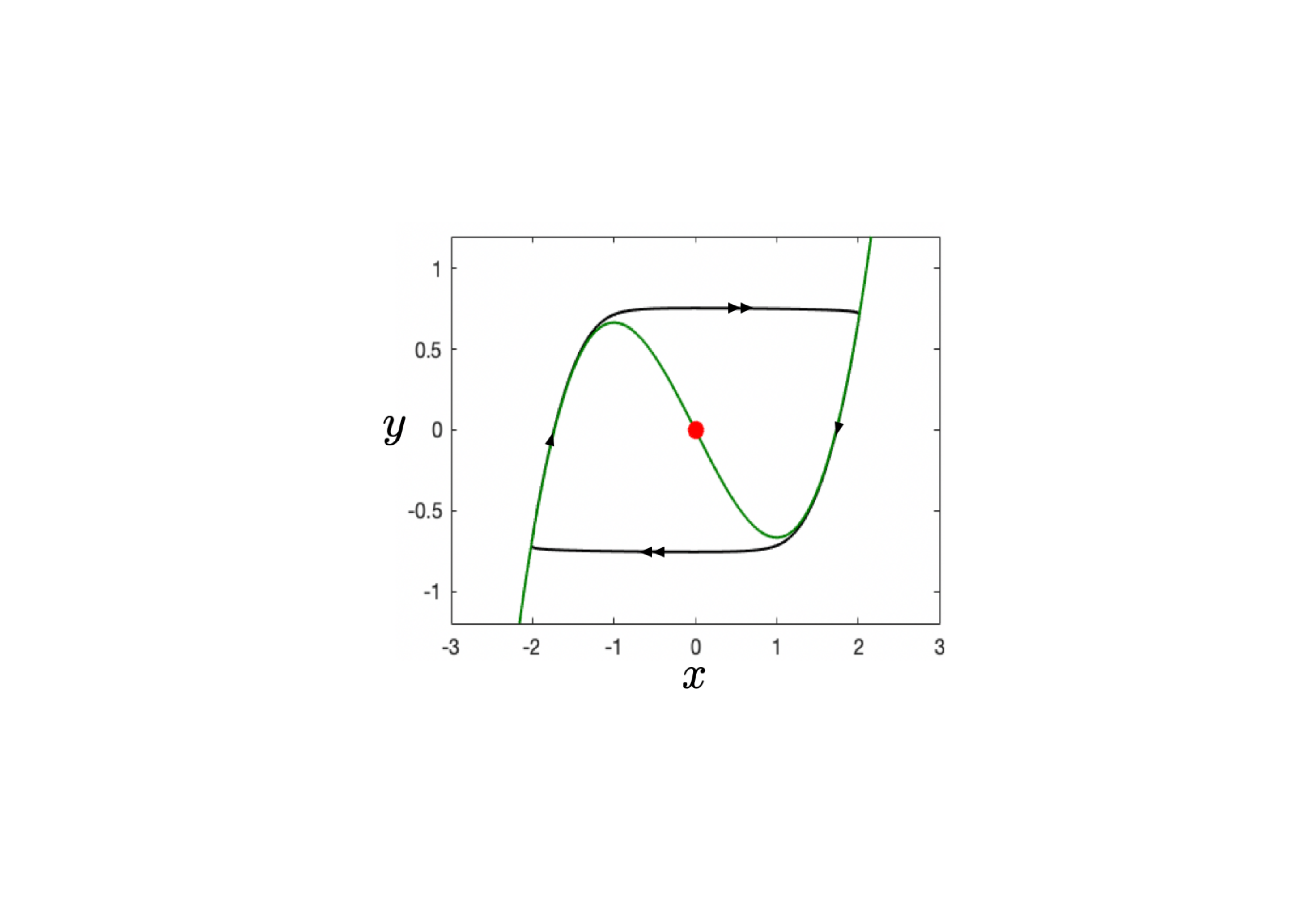}
  \caption{Phase space and limit cycle.}
  \label{vdPPhaseFig}
\end{subfigure}
\caption{Four-stroke relaxation oscillation in the vdP oscillator model \eqref{VdPDyn2} with $\epsilon=10^{-2}$. \textcolor{black}{In \ref{vdPPhaseFig} the characteristic \W{\eqref{VdPChar} which represents the $x$-nullcline of system \eqref{VdPDyn} is shown in green, and the unique equilibrium of system \eqref{VdPDyn} is shown} in red. The same colour convention applies without reference in subsequent figures.}}
\label{vdPFig}
\end{figure}
Figure \ref{vdPFig} shows the typical vdP relaxation oscillator time traces of system \eqref{VdPDyn2} as well as the associated limit cycle and \W{cubic}-shaped characteristic in phase space. In particular, we notice that the time trace $x(t)$ is characterised by alternations between two slow and two fast motions over the course of a single relaxation cycle (`slow-fast-slow-fast') while the time trace $y(t)$ is uniformly slow. This is the hallmark of vdP-type relaxation oscillations, and the phase space representation shown in Figure~\ref{vdPPhaseFig} reveals the four distinct phases of the limit cycle.

\begin{rem}
	\W{All oscillators considered in this work will be recast as dynamical systems in such a way that the corresponding `characteristic' can be identified with a nonlinear nullcline. The specific form of the characteristic depends on the manner in which the problem is recast as a dynamical system. Typically, this will be clear from the context.}
\end{rem}

From a dynamical systems point of view, system \eqref{VdPDyn2} represents a singular perturbation problem in standard form
\begin{align}\label{SF}
\begin{array}{lcl}
x'=f(x,y,\epsilon), \\
y'=\epsilon g(x,y,\epsilon),
\end{array}
\end{align}
where the variable $y$ is considered slow relative to the fast variable $x$, which is a consequence of the order of magnitude difference in the right hand sides of \eqref{SF} caused by the singular perturbation parameter $\epsilon\ll 1$. The mathematical theory for such singular perturbation problems \W{in standard form} is well established and a wealth of results for such standard (vdP-type) relaxation oscillators have been derived with a variety of mathematical tools; we refer the reader to the book by Kuehn \cite{Kuehn2015} and the many references therein. 

\begin{rem}
	\label{rem:nonstandard}
\textcolor{black}{We want to emphasise that the mathematical theory developed for the study of singular perturbation problems \textit{does not depend} on the special (standard) form \eqref{SF}.  Significant advances have been made in the study of non-standard relaxation oscillators models; see e.g. \cite{Huber2005,Gucwa2009,KuehnSzmolyan2015,Kosiuk2016, Wechselberger2019}.}
\end{rem}

In the 1960s Le Corbeiller \cite{LeCorbeiller1960}, motivated by the study of electronic oscillators, termed the vdP-type oscillations {\em `four-stroke'} by reference to the four distinct phases undergone in the relaxation cycle, in order to distinguish these oscillations from {\em `two-stroke'} oscillations, which consist of only two distinct phases per cycle. Such two-stroke oscillations arise not only in the context of electronic oscillators, but also in mechanical oscillators with friction \cite{Berger2002,Bossolini2017,Pomeau2011,Popp1990,Thomsen2003,Won2016}, models of the trade cycle in economics \cite{Puu2006}, aircraft-ground dynamics \cite{Kristiansen2017,Rankin2011}, \textcolor{black}{climate models \cite{Omta2016},} discontinuous plastic deformation in metals \cite{Brons2005} and cell-signalling models \cite{Goldbeter1997}. We refer to Section~\ref{ExmpSec} were we review some of these two-stroke oscillator models.

\

Firstly, let us introduce a representative two-stroke oscillator model
\begin{equation}\label{SS1EQ}
x''+\bigg(\frac{\epsilon}{1-x'}-x'\bigg)+x=0, \qquad 0<\epsilon\ll1,
\end{equation}
which we recast as a dynamical system
\begin{align}\label{SS1}
\begin{array}{lcl}
x'=1-y, \\
y'=x-1+y+\frac{\epsilon}{y}\,.
\end{array}
\end{align}

\begin{figure}[t]
\captionsetup{format=plain}
\centering
\begin{subfigure}{.5\textwidth}
\centering
\includegraphics[trim={2.5cm 8.5cm 0 8.5cm},scale=0.5]{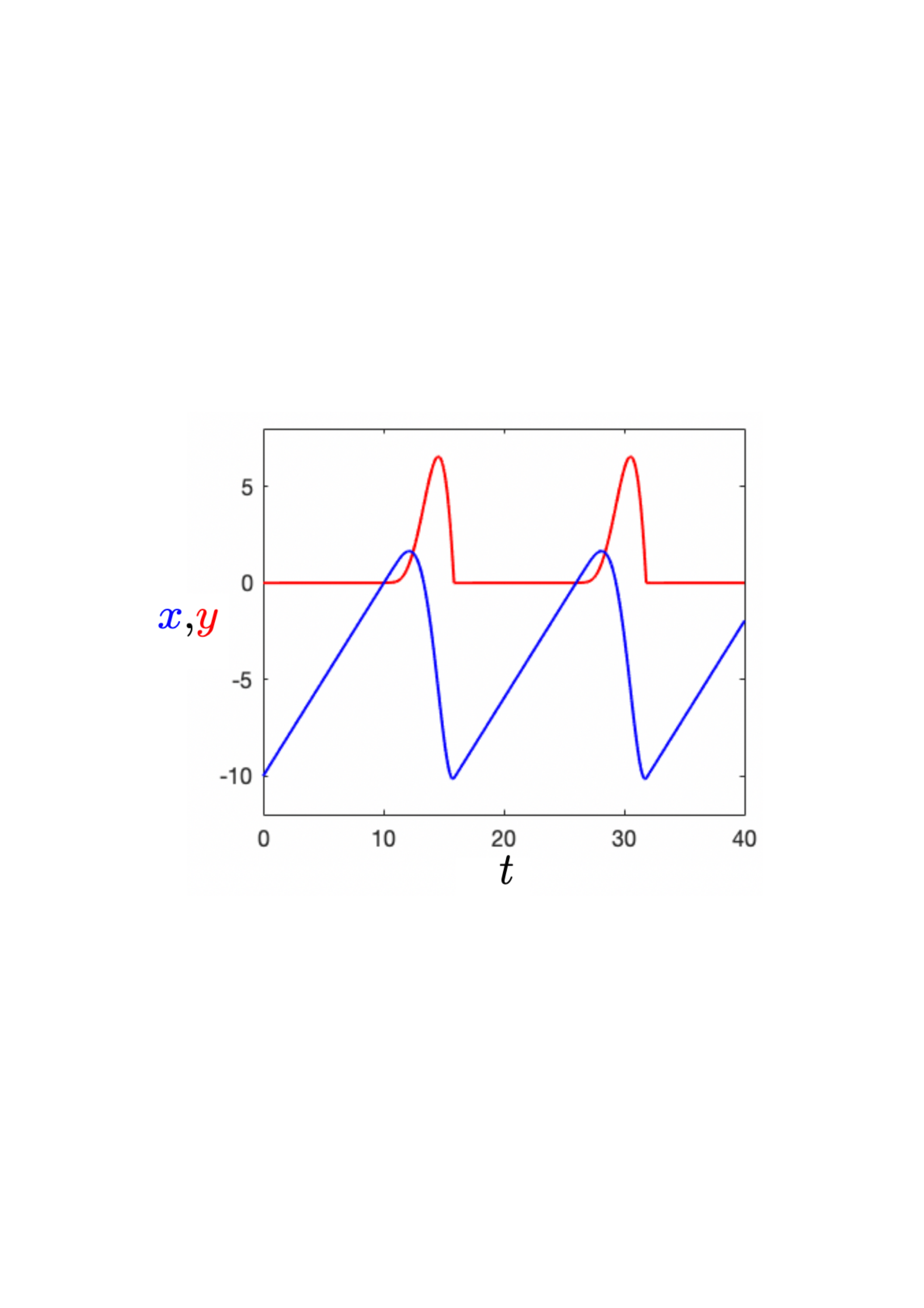}
\caption{Time trace.}\label{RatTraceFig}
\end{subfigure}%
\begin{subfigure}{.5\textwidth}
\centering
\includegraphics[trim={2.5cm 8.5cm 0 8.5cm},scale=0.5]{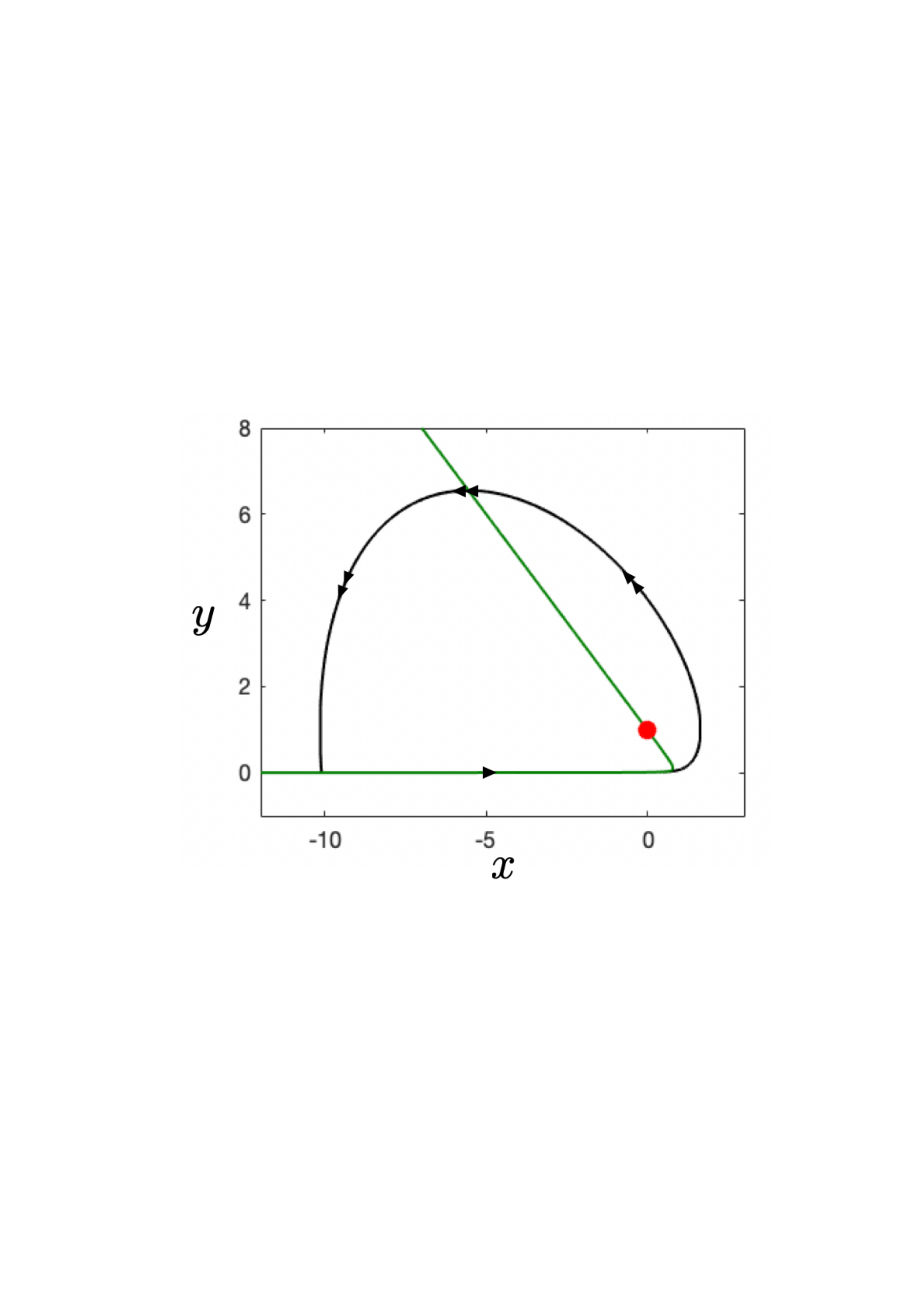}
\caption{Phase space.}\label{figure1_2b}
\end{subfigure}
\caption{Two-stroke oscillation in \eqref{SS1} with $\epsilon=10^{-2}$.}\label{RatPhaseFig}
\end{figure}

\noindent Figure \ref{RatTraceFig} shows the observed two-stroke oscillations in the time trace of $y(t)$, i.e. the limit cycle consist of two distinct phases: a `static' and a `dynamic' phase. The distinct phases can be recognised as segments in phase space relative to the characteristic
\begin{equation}\label{Rts-char}
R_{ts}(y):=1-y-\frac{\epsilon}{y}\,,
\end{equation}
 with the static phase comprising the part of the limit cycle which follows closely the part of the characteristic which asymptotes along $y=0$, and the dynamic phase comprising the part of the limit cycle which is `off' the characteristic (or crosses it); see Figure \ref{RatPhaseFig}. It is the $\epsilon$-dependent rational term $\epsilon/y$ that causes this distinct two-stroke behaviour. This term does not represent a uniformly small perturbation throughout phase space, 
 which distinguishes this model from standard perturbation problems such as the vdP-type four-stroke oscillator models.\\

While Figure~\ref{RatPhaseFig} mimics a slow-fast problem in phase space by following the characteristic for a distinct part of the two-stroke oscillations, the corresponding time trace shown in Figure~\ref{RatTraceFig} does not resemble an appreciable separation of timescales, i.e.~the static and dynamic phase evolve on comparable timescales. 
The underlying `relaxation' structure in this two-stroke model \eqref{SS1} is revealed via a \textcolor{black}{state-}dependent time transformation, a {\em desingularisation}, 
\begin{equation}\label{desing1}
dt = yd\bar t
\end{equation}
which gives
\begin{align}\label{SS2}
\begin{array}{lcl}
x'=(1-y)y, \\
y'=(x-1+y)y+\epsilon,
\end{array}
\end{align}
where with a slight abuse of notation the dash refers now to differentiation with respect to the new time $\bar t$.  System \eqref{SS2} is equivalent to system \eqref{SS1} for $y>0$ (and up to a change of orientation for $y<0$).\footnote{Equivalence is a basic topological concept in dynamical systems (see, e.g.,  \W{\cite{Guckenheimer1983,Kuznetsov2013}}) that is very useful to resolve dynamics near certain types of singularities such as poles of rational functions.}
Importantly, we have obtained a polynomial vector field in \eqref{SS2} including a uniformly small perturbation term, and this system produces now {\em relaxation-type} two-stroke oscillations as shown in Figure \ref{RatTraceFig2}, i.e.~the static and dynamic phase of the two-stroke oscillator can now be clearly identified as slow and fast segments in the corresponding time traces. Hence, the desingularisation \eqref{desing1} has allowed us to extract a singular perturbation problem in the form of a slow-fast system which preserves the dynamical properties of the original two-stroke oscillator \eqref{SS1EQ}. 
\begin{figure}[t]
\hspace{0em}\centerline{\includegraphics[trim={0 8.5cm 0 8.5cm},scale=.5]{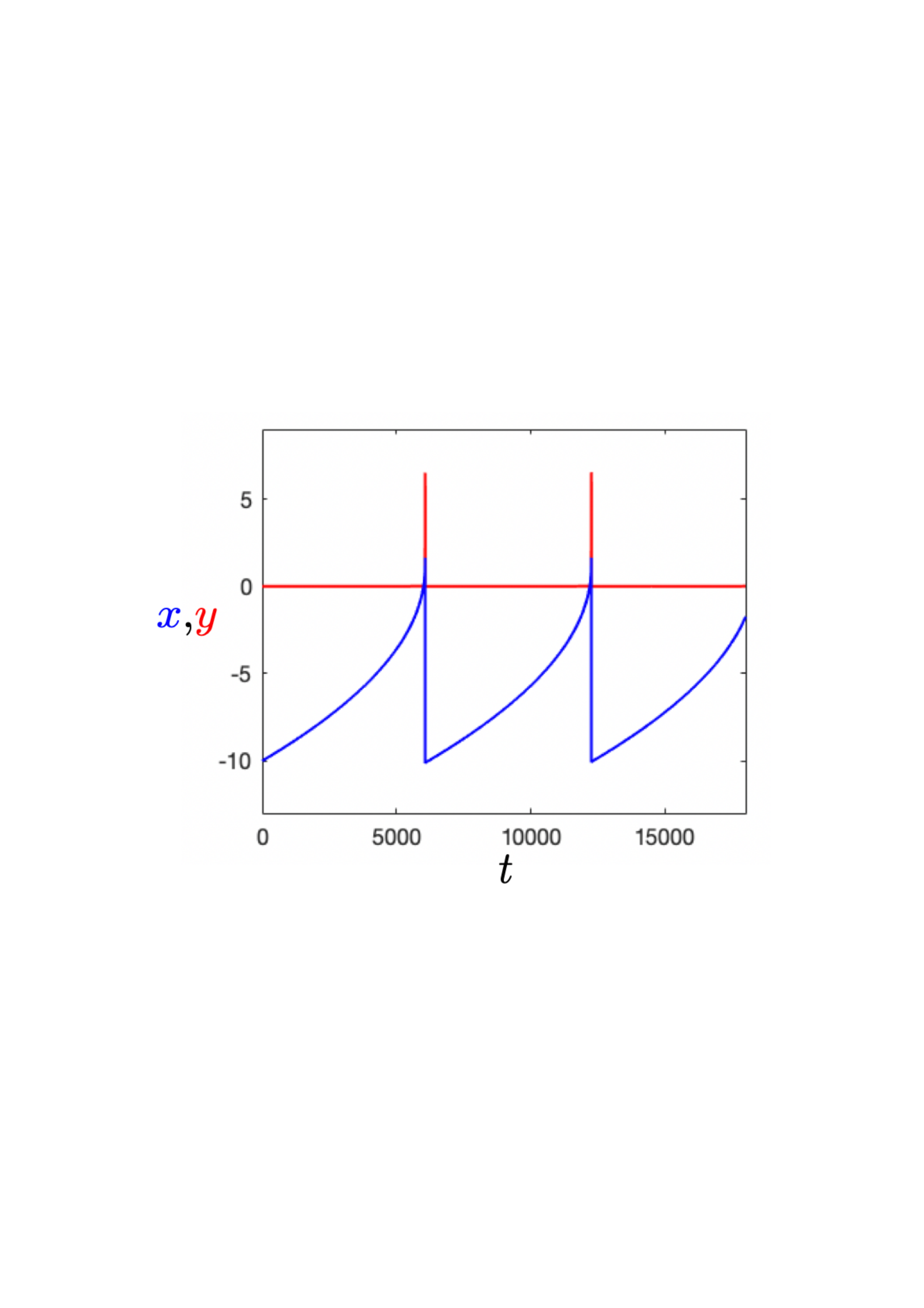}}
\caption{Two-stroke relaxation oscillation in \eqref{SS2}; compare with Figure \ref{RatTraceFig}.} 
\label{RatTraceFig2}
\end{figure}

We emphasise that the singular perturbation problem \eqref{SS2} is {\em not} given in the standard form \eqref{SF}.   While we can clearly distinguish slow and fast motions, there is no distinction between slow and fast variables in this model, i.e.~both time traces shown in Figure~\ref{RatTraceFig2} consist of slow and fast segments. System \eqref{SS2} is part of a more general class of perturbation problems,
\begin{equation}\label{generalSP}
\begin{pmatrix}
{x'} \\
{y'}
\end{pmatrix}
=
\begin{pmatrix}
N_1(x,y) \\
N_2(x,y)
\end{pmatrix}
f(x,y) +\epsilon 
\begin{pmatrix}
G_1(x,y,\epsilon) \\
G_2(x,y,\epsilon)
\end{pmatrix}
\,,
\end{equation}
with the specific choice of
\begin{equation}\label{NFG1}
\begin{pmatrix}
N_1(x,y) \\
N_2(x,y)
\end{pmatrix}
=
\begin{pmatrix}
1-y \\
x-1+y
\end{pmatrix}\,,
\quad f(x,y)=y\,,
\quad
\begin{pmatrix}
G_1(x,y,\epsilon) \\
G_2(x,y,\epsilon)
\end{pmatrix}
=\begin{pmatrix}
0 \\
1
\end{pmatrix},
\end{equation}
for system \eqref{SS2}. \W{Systems of the form \eqref{generalSP}  are identified as singular perturbation problems whenever the set
	\begin{equation}
	\label{S_0}
	S_0=\{z\in\mathbb R^2:N(z)f(z)=0\}
	\end{equation}
contains a smooth one-dimensional submanifold of $\mathbb R^2$, which reflects the geometric definition of a singular perturbation problem.\footnote{\W{A general treatment of singular perturbation theory beyond the standard form in arbitrary dimensions can be found in \cite{Wechselberger2019,Goeke2014}.}}} The main aim of this article is to consider this more general class of singular perturbation problems \eqref{generalSP} and provide a \textcolor{black}{framework for the application of geometric singular perturbation theory (GSPT)} for which a (global) separation of slow and fast variables is not required. This will allow us to prove existence, uniqueness and stability results for a class of two-stroke relaxation oscillations, in a manner conceptually analogous to the derivation of similar results in vdP-type oscillators; see, e.g., \cite{Krupa2001b}. 

\W{In general,  a standard form \eqref{SF} of a singular perturbation problem where the variables reflect the scale separation can only be achieved locally but not globally. Thus from an application point-of-view it is desirable to provide tools to analyse  singularly perturbation problems of the more general form \eqref{generalSP} in a coordinate-independent manner.} As pointed out by Fenichel in his seminal work on GSPT \cite{Fenichel1979}, a global standard {\em `form is not natural, however, because it depends on the choice of special coordinates'}, and he clearly explains how to deal with a more general form. The general GSPT framework we present here goes beyond Fenichel's work and deals also with {\em loss of normal hyperbolicity}, a necessary ingredient for two-stroke (or any relaxation-type) oscillations.

\begin{rem}
In the context of quasi steady-state reduction (QSSR) techniques commonly used in chemical reaction systems, 
Goeke \& Walcher \cite{Goeke2014} provide a general framework that applies to systems \eqref{generalSP} in the normally hyperbolic case as outlined by Fenichel. 
Kaleda \cite{Kaleda2011} shows the first results on two-stroke oscillations with a focus on the bifurcation of a slow-fast separatrix loop. 
 \textcolor{black}{\W{We would like to point out that our results on relaxation oscillations (Section 4) can be derived from a more general result found in \cite{Maesschalck2011}, Theorem 2.} We present an alternative argument in this work in order to illustrate the scope, simplicity and applicability of the methods developed herein.}
\end{rem}

The manuscript is structured as follows. In Section \ref{ExmpSec} we motivate our study by introducing examples of two-stroke oscillations arising in models for transistor oscillations and mechanical oscillations with friction, and show how they can be appropriately phrased as singular perturbation problems in the general form \eqref{generalSP}.
In Section \ref{FrameworkSec} we develop a framework for the application of GSPT to this more general class of singular perturbation problems \eqref{generalSP}\textcolor{black}{, with an emphasis on the presentation of \W{global, relaxation oscillatory}  results from classic GSPT in a coordinate-independent setting}. 
In Section \ref{ROSec} we define minimal assumptions on singular perturbation problems in the general form \eqref{generalSP} that enable two-stroke relaxation oscillations, and prove existence, uniqueness and stability results. Section \ref{DiscussionSec} collects and contrasts dynamic features associated with two- and four-stroke relaxation oscillations, and we discuss possible transitions between two and four-stroke relaxation oscillations. We also discuss the onset of two-stroke oscillations in a mechanical oscillator. Finally, in Section \ref{ConcSec} we conclude and outline future work.

\section{Two-stroke oscillators in applications}\label{ExmpSec}

We motivate our work by providing examples of two-stroke oscillators which we draw from the study of nonlinear transistor oscillators and mechanical oscillators with friction.

\subsection{An electronic  two-stroke oscillator model}

In \cite{Hester1968}, the author uses the {\em Ebers-Moll} large-signal approximation to show that a class of nonlinear transistor oscillators including tuned-collector, tuned-base, and Hartley transistor oscillators, can be described by a Lord-Rayleigh type equation
\begin{equation}\label{LRType}
\ddot{x}+r_{em}(\dot{x})+x=0,
\end{equation}
where $x$ denotes (dimensionless) `current'\textcolor{black}{,
\[
r_{em}(\dot x) = -\mu \left(e^{a \dot x} - \kappa e^{(a+b) \dot x} \right) ,
\]
$\mu,\kappa,a,b>0$ are positive constants, and we assume 
\begin{equation}\label{ass-kappa}
\kappa<\frac{a}{a+b}<1
\end{equation}
is sufficiently small.} Equation \eqref{LRType} can be recast as a dynamical system 
\begin{align}\label{LRDyn}
\begin{array}{lcl}
\dot{x}=-\bar{y}, \\
\dot{\bar y}=x-\bar{R}_{em}(\bar{y}),
\end{array}
\end{align}
with nonlinear characteristic
\begin{equation}\label{EMChar}
\bar{R}_{em}(\bar{y})=-r_{em}(-\bar{y})=\mu e^{-a\bar{y}}(1-\kappa e^{-b\bar{y}}),
\end{equation}
This characteristic $\bar{R}_{em}(\bar y)$ has a  (unique) turning point at
\begin{equation}\label{EMFold}
(x_{\ast},{y}_\ast)=\left(\mu \bigg(\frac{a}{(a+b)\kappa}\bigg)^{a/b}\bigg(\frac{b}{a+b}\bigg),-\frac{1}{b}\ln\bigg(\frac{a}{(a+b)\kappa}\bigg) \right)\,,
\end{equation}
where ${y}_\ast<0$ due to \eqref{ass-kappa}. The characteristic approaches zero as $\bar y\to\infty$ and grows exponentially towards $-\infty$ for $\bar y<{y}_\ast$. For later convenience, we make a coordinate change $\bar y=y+y_\ast$ which shifts the position of the turning point of the characteristic to the $x$-axis, i.e.~we obtain 
\begin{align}\label{EM2}
\begin{array}{lcl}
\dot{x}=-y- {y}_\ast, \\
\dot{y}=x-R_{em}(y).
\end{array}
\end{align}
with characteristic
\begin{equation}\label{EMChar2}
R_{em}(y)=x_*\bigg( \frac{a+b}{b}\,e^{-ay} -\frac{a}{b}\,e^{-(a+b)y}\bigg)\,.
\end{equation}
Figure \ref{EMTraceFig} shows the observed two-stroke oscillations in system \eqref{EM2}, i.e.~the time trace $y(t)$ consists of a static and a dynamic phase. The corresponding phase portrait, Figure~\ref{ExpTTraceFig1}, shows that the limit cycle follows closely the characteristic in the static phase while it is off the characteristic in the dynamic phase, i.e.~it shows the same qualitative features of the two-stroke oscillator shown in Figure~\ref{RatPhaseFig}. The main (mathematical) difference between the part of the two characteristics \eqref{EMChar2} and \eqref{Rts-char} that determine the static phase of the limit cycle, is exponential growth versus unlimited growth due to a pole of a rational function. 
\begin{figure}[t]
\captionsetup{format=plain}
\centering
\begin{subfigure}{.5\textwidth}
  \centering
  \includegraphics[trim={2.5cm 8.5cm 0 8.5cm},scale=0.5]{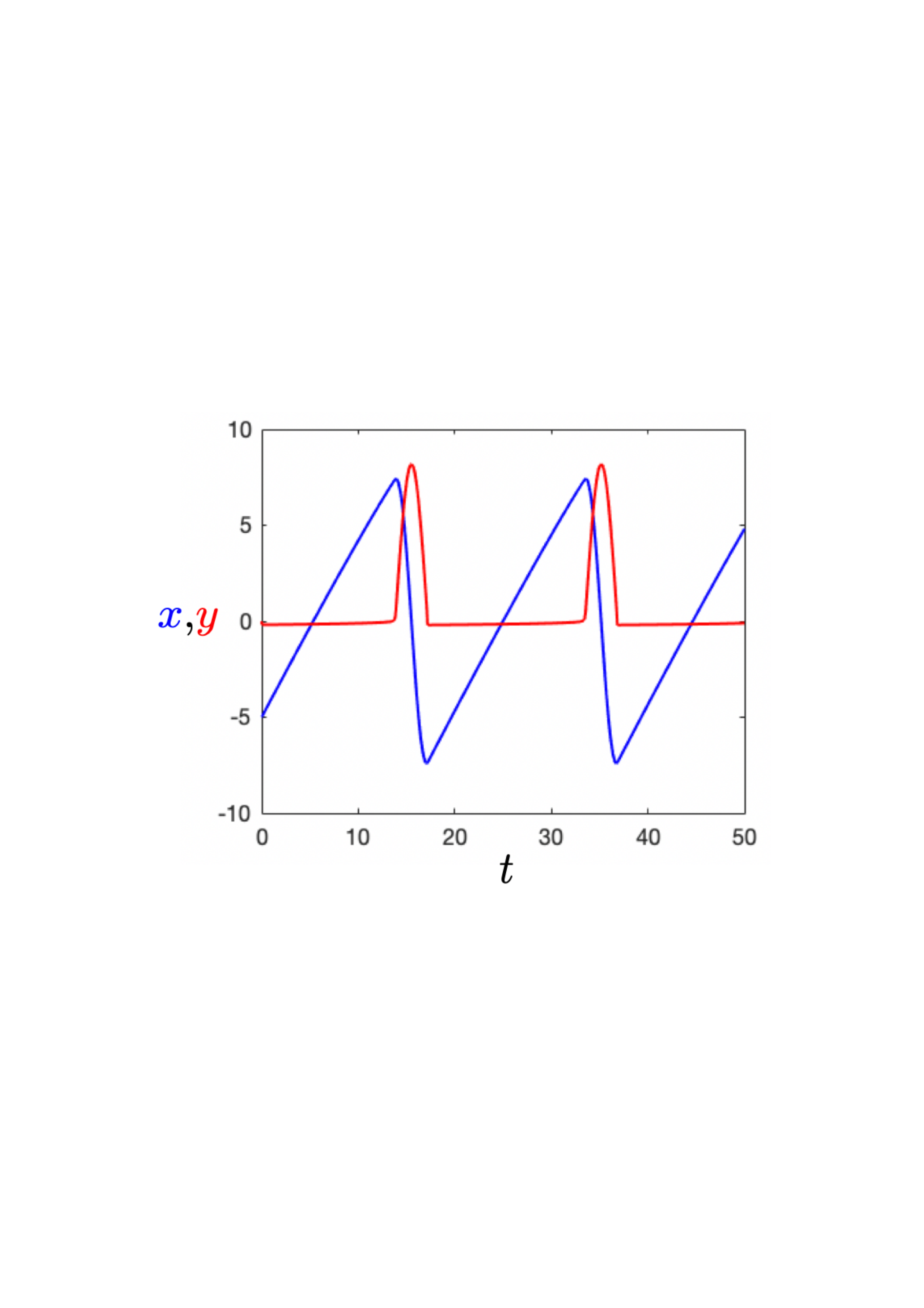}
  \caption{Time trace.}\label{EMTraceFig}
  \label{fig:sub1}
\end{subfigure}%
\begin{subfigure}{.5\textwidth}
  \centering
  \includegraphics[trim={2.5cm 8.5cm 0 8.5cm},scale=0.5]{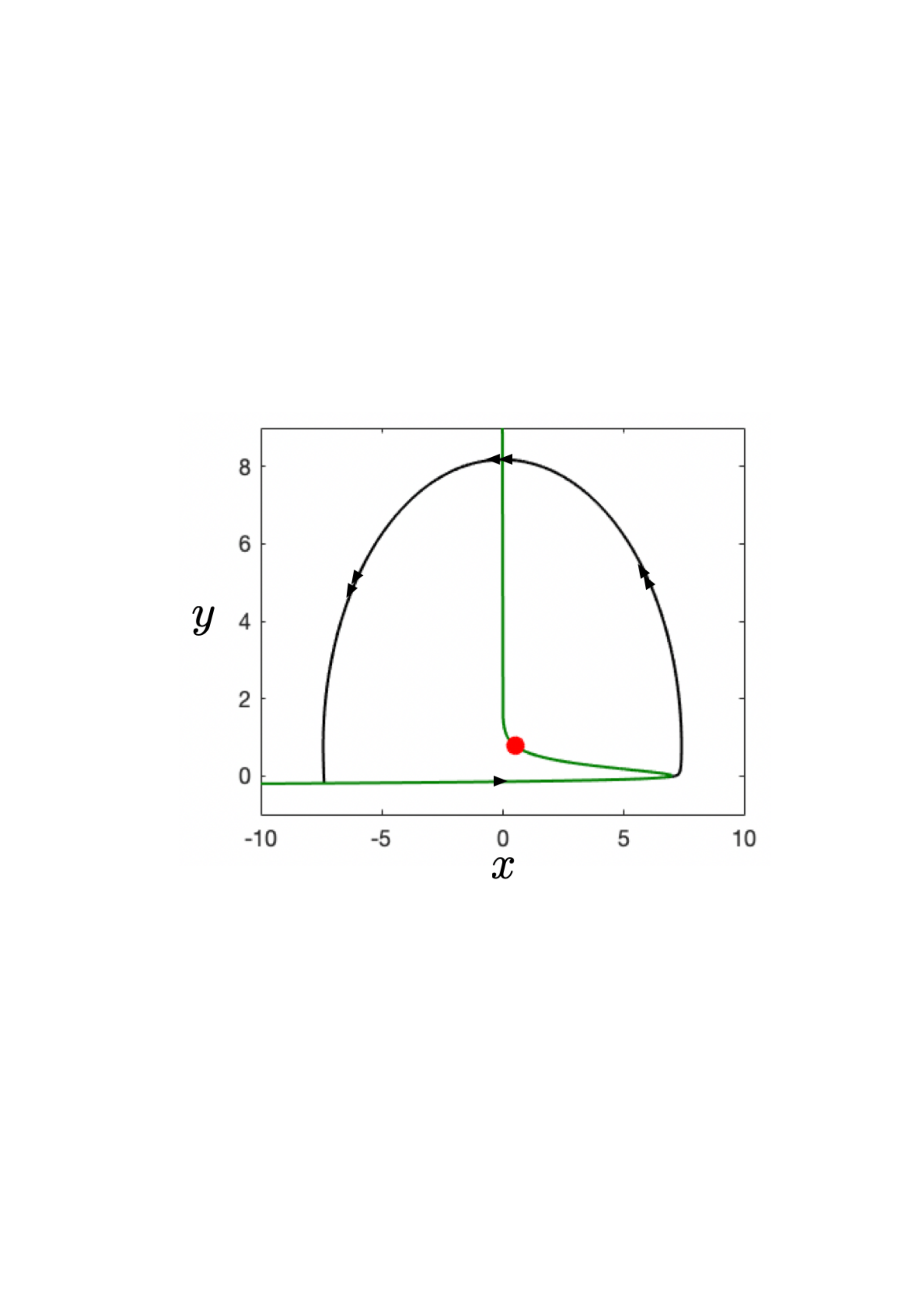}
  \caption{Phase space.}\label{ExpTTraceFig1}
  \label{fig:sub5}
\end{subfigure}
\caption{Two-stroke oscillation in an Ebers-Moll model for a nonlinear transistor \eqref{EM2}, with parameter values $(\mu,\kappa,a,b)=(1,10^{-2},4,6)$.}\label{EMFig}
\label{fig:test}
\end{figure}
\begin{figure}[t]
\hspace{0em}\centerline{\includegraphics[trim={0 8cm 0 8cm},scale=.4]{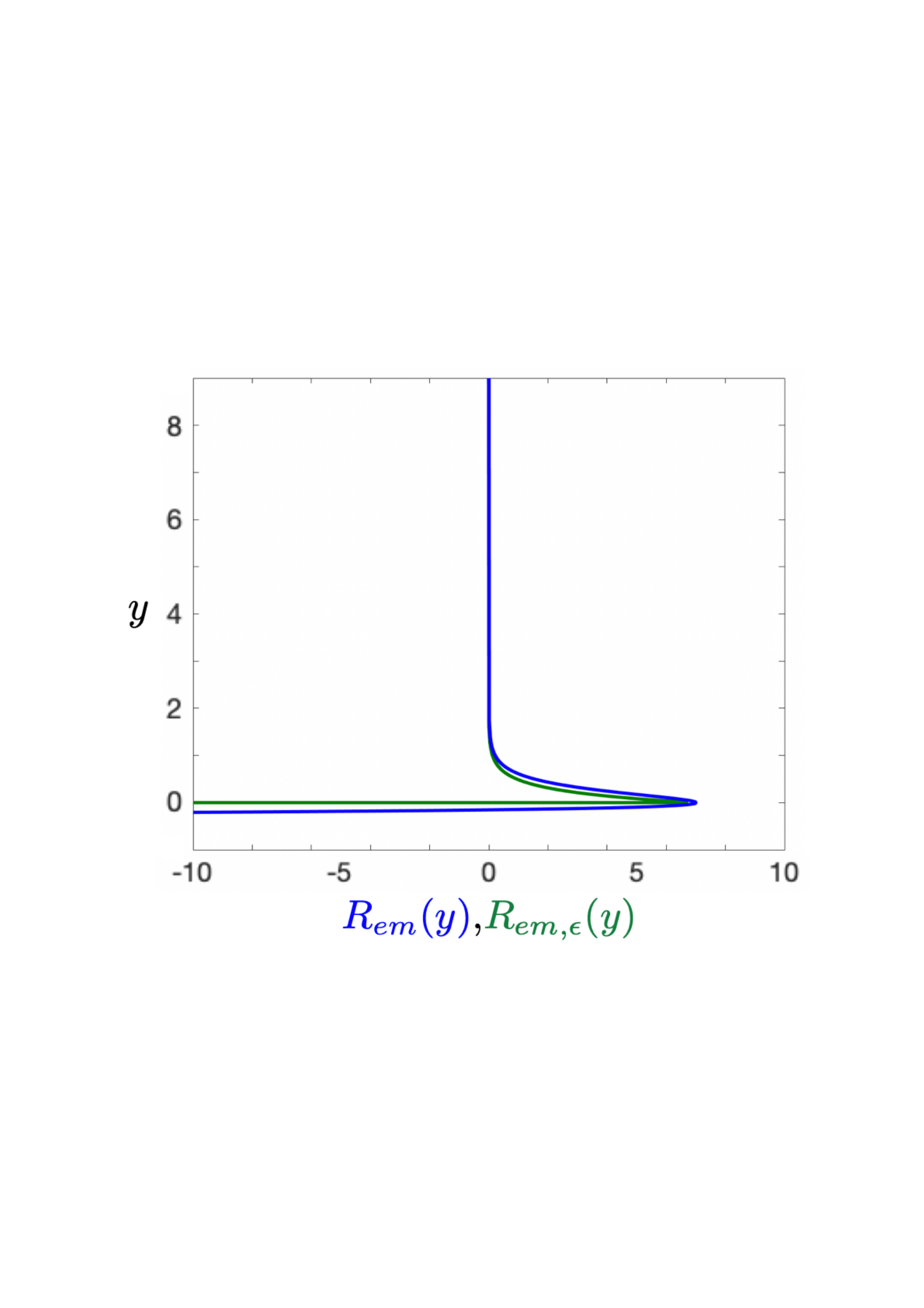}}
\caption{Comparison of the approximation \eqref{ExpCharApp} (green), and the actual characteristic \eqref{EMChar2} (blue). For the characteristic \eqref{EMChar2}, the same parameter values as Figure \ref{EMFig} were used; note the turning point is located at $(x_*,0)$. For the characteristic \eqref{ExpCharApp} we used $\epsilon=10^{-3}$.}\label{EMCompFig}
\end{figure}
This observation motivates us to model and replace the exponential growth of the characteristic \eqref{EMChar2} for $y<0$ by a rational term, and we define the following approximation 
\begin{equation}\label{ExpCharApp}
R_{em,\epsilon}(y)=x_* e^{-ay}-\frac{\epsilon}{y},
\end{equation}
\noindent
where $0<\epsilon\ll1$ is a sufficiently small parameter. The rational term $-\epsilon/y$ approximates the lower branch near $y=0$ while the exponential profile of the vertical asymptote is preserved by the $x_* e^{-ay}$ term;  Figure \ref{EMCompFig} compares the two characteristics \eqref{EMChar2} and \eqref{ExpCharApp}. \\

\begin{rem}
The characteristic \eqref{ExpCharApp} has two turning points for $\epsilon>0$: one at $(x,y)\sim(x_*,0)$, and the other at $(x,y)\sim(0,\hat{y})$, with $\hat{y}>0$. Both characteristics \eqref{EMChar2} and \eqref{ExpCharApp} have a vertical asymptote along $x=0$, though the asymptote is approached from the right in \eqref{EMChar2} and from the left in \eqref{ExpCharApp}. These minor differences in the characteristics have no significant effect on the dynamics. 
For similar reasons, we also refrain from shifting the characteristic \eqref{ExpCharApp} slightly to the right to remove (unphysical, yet very small) negative values for $y>0$.

\end{rem}
%
\begin{figure}[t]
\captionsetup{format=plain}
\centering
\begin{subfigure}{.5\textwidth}
  \centering
  \includegraphics[trim={2.5cm 8.5cm 0 8.5cm},scale=0.5]{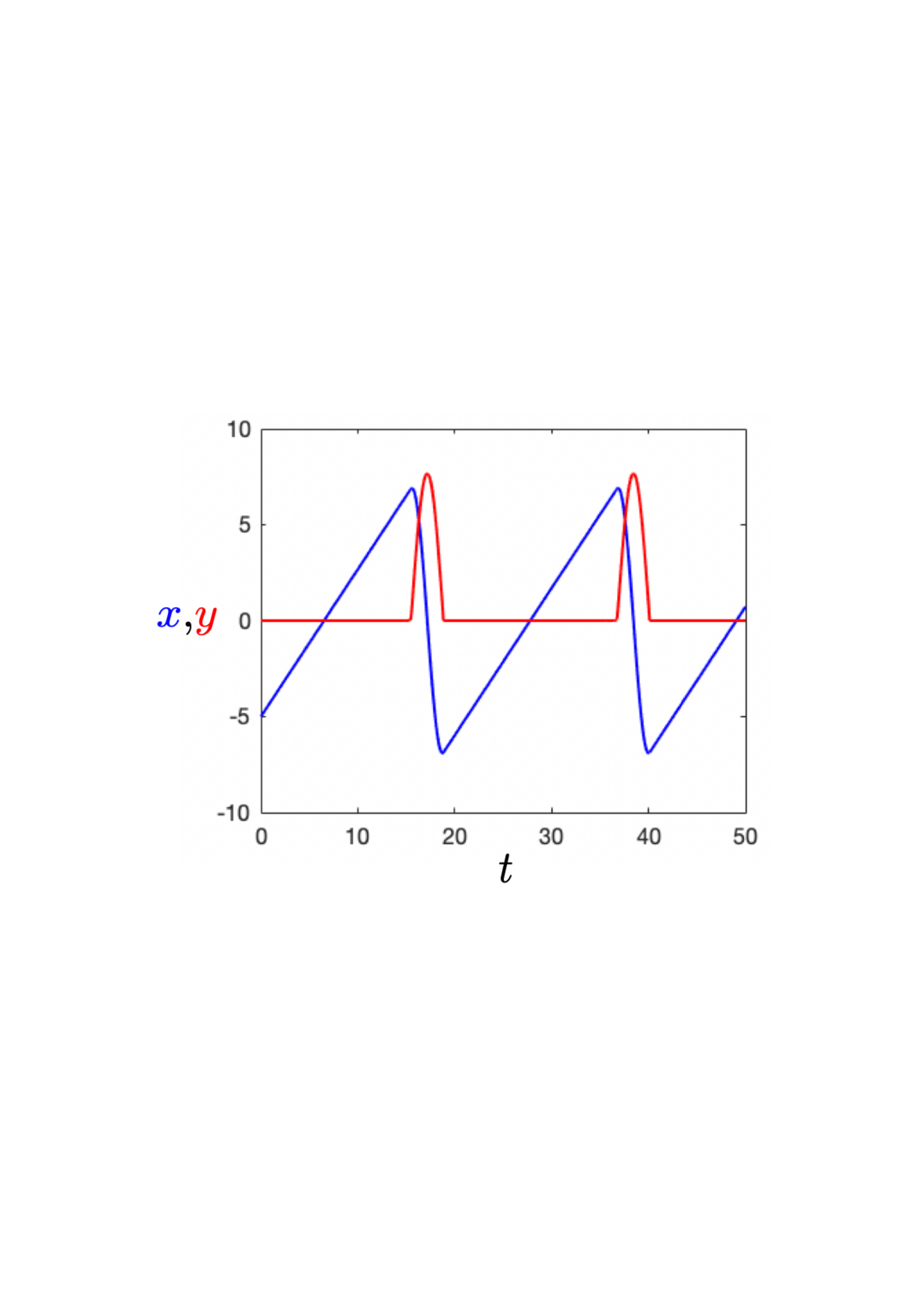}
  \caption{Time trace.}\label{ExpTraceFig}
  \label{fig:sub1}
\end{subfigure}%
\begin{subfigure}{.5\textwidth}
  \centering
  \includegraphics[trim={2.5cm 8.5cm 0 8.5cm},scale=0.5]{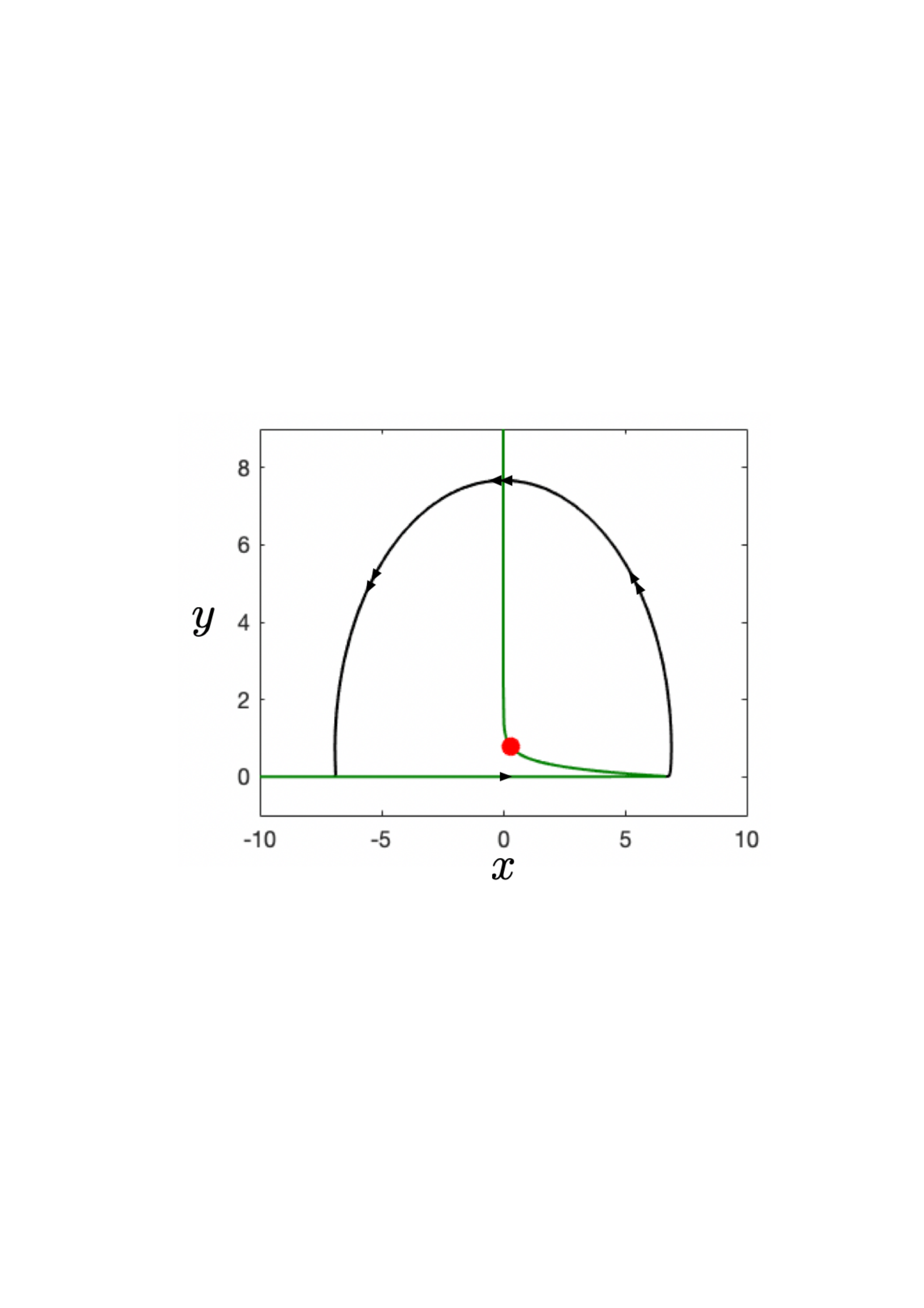}
  \caption{Phase space.}\label{ExpPhaseFig}
  \label{fig:sub5}
\end{subfigure}
\caption{Two-stroke oscillation in system \textcolor{black}{\eqref{ExpSys1}}, for the same parameter values as Figure \ref{EMFig} and $\epsilon=10^{-3}$; compare with Figure \ref{EMFig}.}\label{ExpAppFig}
\label{fig:test}
\end{figure}
The corresponding approximating system is given by
\begin{align}\label{ExpSys1}
\begin{array}{lcl}
x'=-{y}_\ast-y, \\
y'=x-R_{em,\epsilon}(y),
\end{array}
\end{align}
and the observed two-stroke oscillation and corresponding limit cycle are shown in Figure \ref{ExpAppFig}; compare with Figure \ref{EMFig}.
%
\begin{figure}[t]
\hspace{0em}\centerline{\includegraphics[trim={0 8.5cm 0 8.5cm},scale=.5]{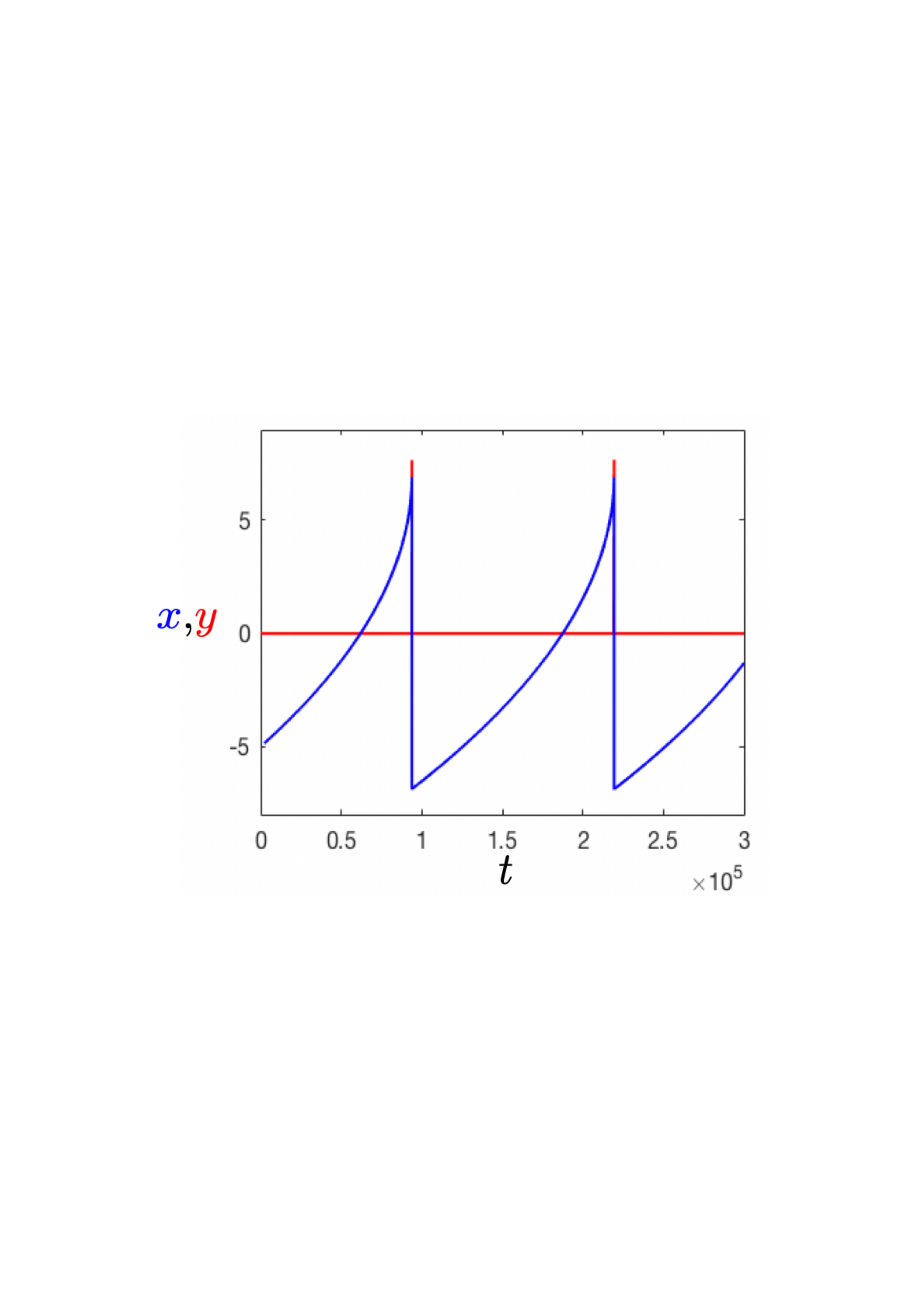}}
\caption{Two-stroke relaxation oscillation in \eqref{ExpSys} with the same parameters as Figure \ref{EMFig} and $\epsilon=10^{-3}$. Compare with Figure \ref{ExpTraceFig}, and note the time-scale difference.}\label{ExpTTraceFig}
\end{figure}

In order to reveal the underlying relaxation structure, we make the same \textcolor{black}{state-}dependent time desingularisation $dt = y\,d\bar t$ as in \eqref{desing1}, which leads to the system
\begin{align}\label{ExpSys}
\begin{array}{lcl}
x'=-({y}_\ast+y)y, \\
y'=(x-x_\ast e^{-ay})y+\epsilon ,
\end{array}
\end{align}
where the dash notation now denotes differentiation with respect to $\bar{t}$. System \eqref{ExpSys} is equivalent to \eqref{ExpSys1} on $\{y>0\}$  (and up to a change of orientation on $\{y<0\}$). The effect of the desingularisation \eqref{desing1} is that we observe relaxation-type two-stroke oscillations in system \eqref{ExpSys}; see Figure  \ref{ExpTTraceFig}. This system is in the general form of a singular perturbation problem \eqref{generalSP} with 
\begin{equation}\label{NFG2}
\begin{pmatrix}
N_1(x,y) \\
N_2(x,y)
\end{pmatrix}
=
\begin{pmatrix}
-y_\ast -y \\
x-x_\ast e^{-ay}
\end{pmatrix}\,,
\quad f(x,y)=y\,,
\quad
\begin{pmatrix}
G_1(x,y,\epsilon) \\
G_2(x,y,\epsilon)
\end{pmatrix}
=\begin{pmatrix}
0 \\
1
\end{pmatrix}
\,.
\end{equation}
Section~\ref{FrameworkSec} will provide the GSPT tools for the analysis of the observed two-stroke relaxation oscillations.

\subsection{ A stick-slip oscillator model}\label{SSSec}

Mechanical systems commonly exhibit two-stroke oscillations as a consequence of the so-called stick-slip phenomenon due to friction. The corresponding observations range from earthquake faulting or the sound of a violin, to the unwanted screeching of chalk on the chalk-board or sliding of machine parts (see \cite{Berger2002} and references therein). 

Many of the key dynamical features occurring in such complex mechanical systems with friction are captured by the simple spring-mass system shown in Figure \ref{SMFig}, in which one considers a mass $m$ on a conveyor moving with constant velocity $v_0$, attached to a wall by a spring of stiffness $k$. For small displacements $x$, the mass moves with the conveyor. In this case, the relative velocity between mass and belt is $v_r=\dot{x}-v_0=0$. This is referred to as the `stick mode', or `static phase'. As the mass moves with the belt, the restoring force of the spring increases linearly in accordance with Hookes law, and the mass starts to slip once this restoring force balances the maximum static friction: this is the `stick-slip transition'. Once slipping begins, we are in the `slip mode' or `dynamic phase'. Finally, the spring counteracts the sliding motion until static friction takes hold again, and the process starts over.
\begin{figure}[t]
\hspace{0em}\centerline{\includegraphics[trim={0 2.5cm 0 2.5cm},scale=.35]{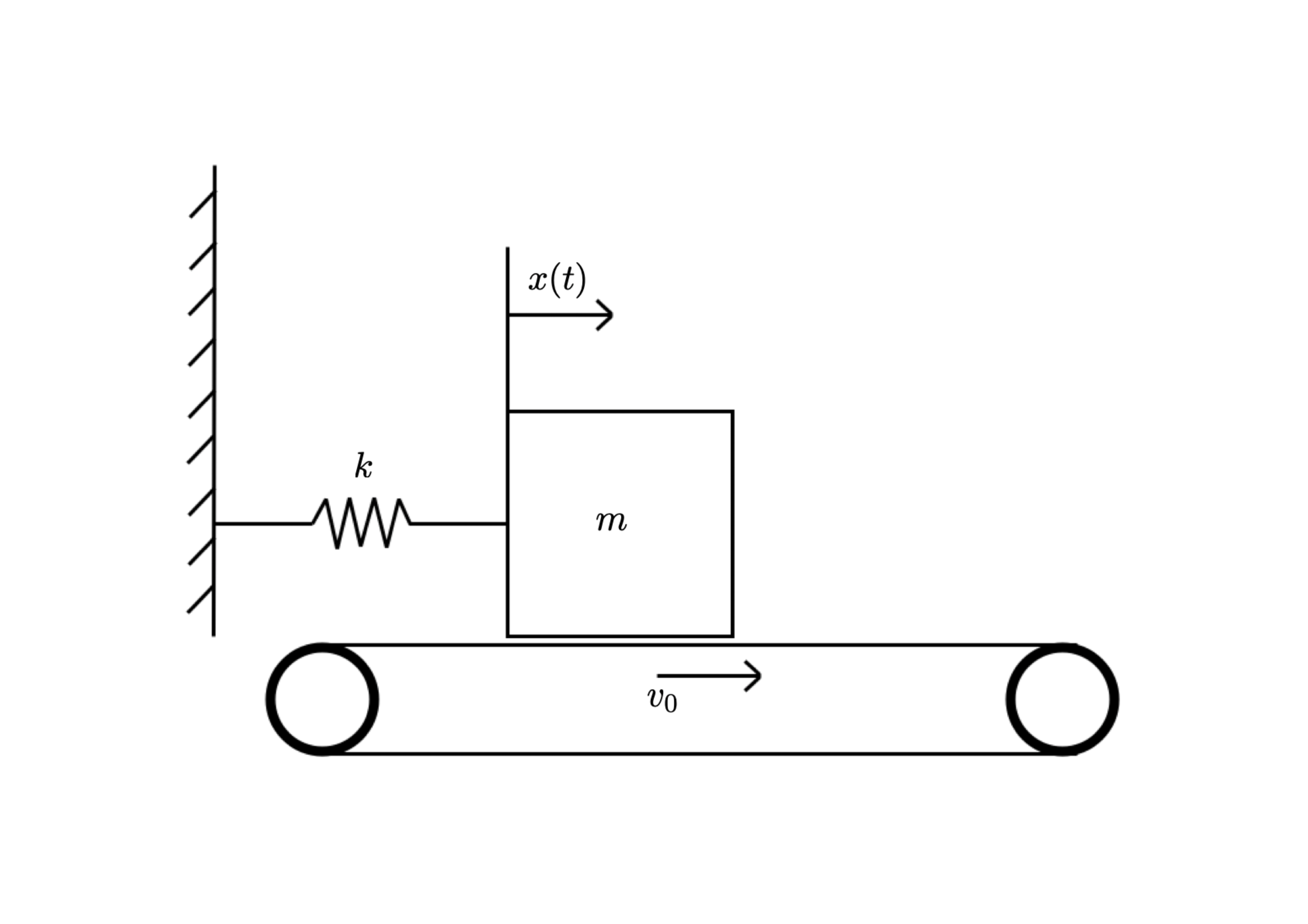}}
\caption{Schematic of a spring-mass oscillator.}\label{SMFig}
\end{figure}

This mechanical system is described by the following (dimensionless) equation of motion
\begin{equation}\label{SSEqn}
\ddot{x}+F_f(v_r)+x=0,
\end{equation}
where $x$, $t$ and $v_r=\dot{x}-v_0$ denote displacement, time and relative velocity, respectively. The stick-slip problem is usually modelled as a discontinuous system, with static and dynamic phases treated independently in accordance with
\begin{equation}\label{Fric}
F_f(v_r)=
\begin{cases}
- x, & v_r=0, \\
sgn(v_r)\mu(v_r), & v_r\neq0,
\end{cases}
\end{equation}
where the function $\mu(v_r)$ denotes the {\em coefficient of friction}. The transition from stick to slip is determined by the {\em stiction law}, which asserts that the stick phase ($v_r=0$) is maintained as long as
\begin{equation}\label{Stiction}
\vert F_f(0)\vert=\vert x\vert \leq \mu_s,
\end{equation}
where $\mu_s$ denotes the maximal value of static friction capable of preventing the onset of the slipping motion. The function $\mu(v_r)$ defines the dynamic friction law, while the $sgn(v_r)$ term ensures that the force due to friction opposes the direction of relative motion. As is typical, we assume $\mu(v_r)$ is an even and strictly positive function, noting that in general the specific form depends on the application. It is crucial for the existence of oscillations that $F_f'(v_r)<0$ for small $|v_r|$. This is known as the negative slope criterion in the stick-slip literature, and is observed in many applications exhibiting the so-called `Stribeck effect' \cite{Berger2002}. Following \cite{Won2016}, we consider the two forms for $\mu(v_r)$ shown in Figure \ref{SSCharsFig} as important examples: 

\begin{itemize}
\item\
$\mu(v_r)$ decays exponentially toward a minimum value $\mu_m$, as in Figure \ref{ExpCharFig}. Such a dependence is typical for the case of dry friction between solid surfaces, and an appropriate form is suggested in \cite{Berger2002} (see also \cite{Won2016}) as follows,
\begin{equation}\label{ExpSSChar}
\mu(v_r)=\mu_m+(\mu_s-\mu_m)e^{-a\vert v_r|},
\end{equation}
where $a>0$ is a fitting parameter which controls the slope of the characteristic.

\item
$\mu(v_r)$ is modelled as a polynomial function which decays initially towards a minimum value $\mu_m$ at relative velocity $v_m$, and increases for $|v_r|>v_m$, as in Figure \ref{PolyCharFig}. This is suitable for systems exhibiting dry friction for small $\vert v_r\vert$, and liquid or `viscous' friction for larger $\vert v_r\vert$. An appropriate form appears in, e.g. \cite{Thomsen2003,Ibrahim1994,Panovko1965} (see also \cite{Won2016,Chen2014}):
\begin{equation}\label{PolySSChar}
\mu(v_r)=\mu_s-\frac{3(\mu_s-\mu_m)}{2v_m}\vert v_r\vert+\frac{(\mu_s-\mu_m)}{2v_m^3}\vert v_r\vert^3\,.
\end{equation}

\end{itemize}

\begin{figure}[t]
\captionsetup{format=plain}
\centering
\begin{subfigure}{.5\textwidth}
  \centering
  \includegraphics[trim={3.5cm 3cm 0 3cm},scale=0.35]{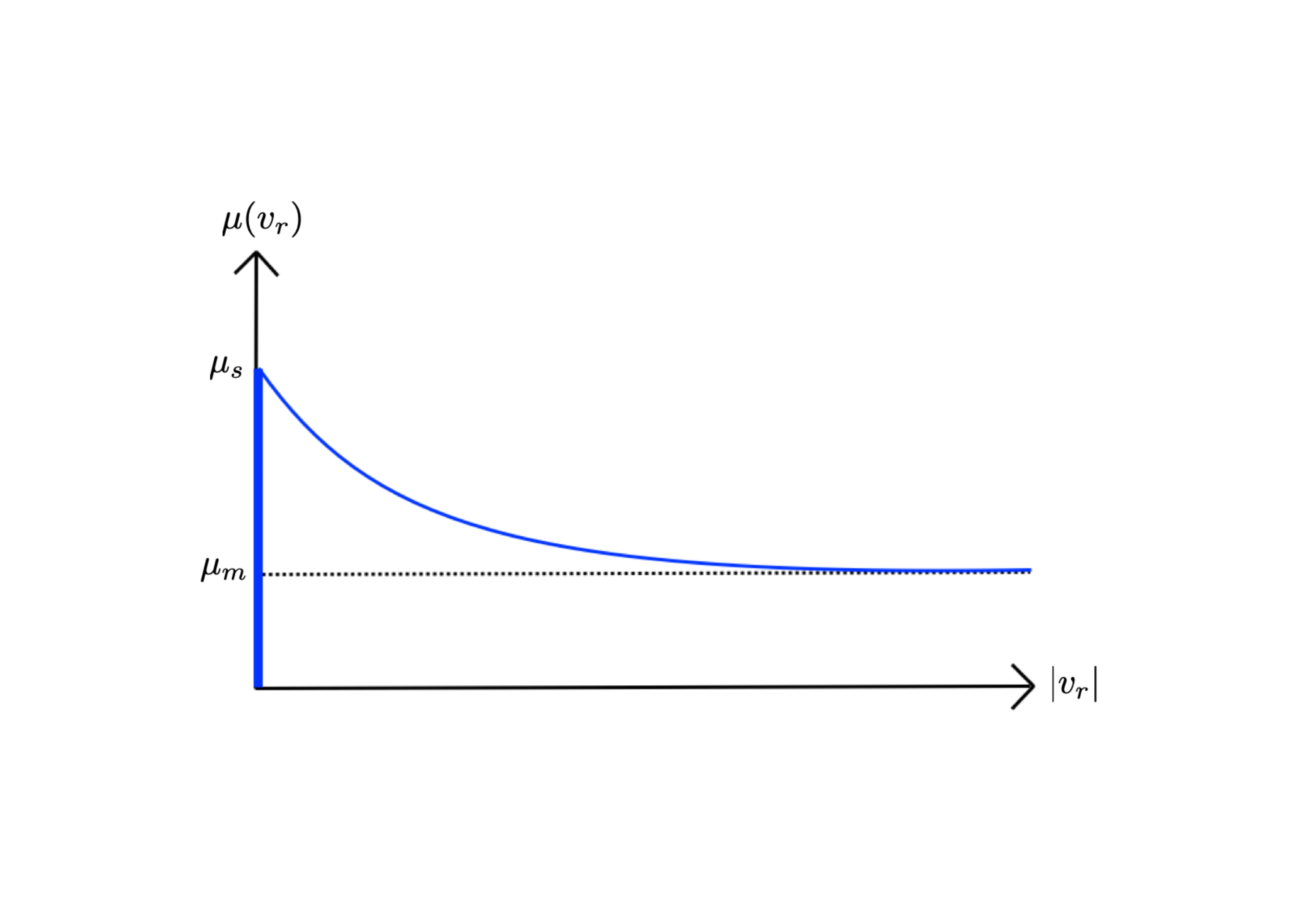}
  \caption{Exponential-type.}\label{ExpCharFig}
  \label{fig:sub1}
\end{subfigure}%
\begin{subfigure}{.5\textwidth}
  \centering
  \includegraphics[trim={3.5cm 3cm 0 3cm},scale=0.35]{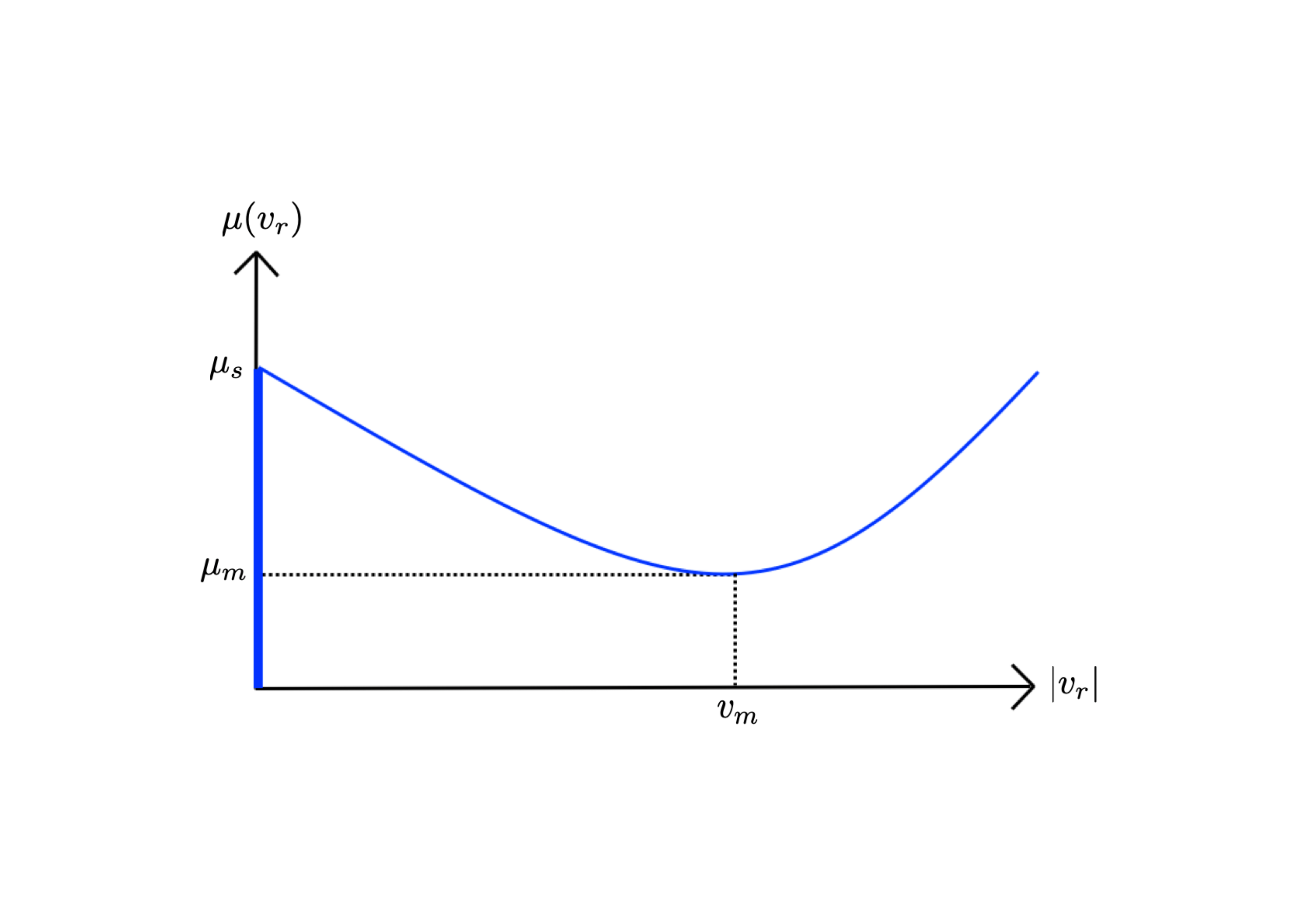}
  \caption{Polynomial-type.}\label{PolyCharFig}
  \label{fig:sub5}
\end{subfigure}
\caption{Friction-velocity curves for exponential and polynomial-type laws in (a) and (b) respectively. The thick line for $v_r=0$, $\mu<\mu_s$ indicates the regime where the static friction dominates.}\label{SSCharsFig}
\label{fig:test}
\end{figure}

\begin{rem}
Note that positivity of $\mu(v_r)$ and the presence of the $sgn(v_r)$ term in \eqref{Fric} leads to a jump discontinuity at $v_r=0$. This (discontinuous) modelling choice serves as an approximation of the real (smooth) mechanical system. Two-stroke cycles in such discontinuous models are obtained by phase space constructions in which segments from static and dynamic phases are concatenated, recalling that the transition from stick to slip is enforced when the threshold in \eqref{Stiction} is reached; see e.g.~\cite{Berger2002,Popp1990,Won2016,Chen2014} and the references therein.
\end{rem}

In the absence of external forcing, the velocity of the mass never exceeds the belt velocity, and so stick-slip oscillations occur only for $v_r\leq0$ \cite{Thomsen2003}. We intend to `smooth' or `regularise' the discontinuous dynamical system \eqref{SSEqn} in the domain relevant for our analysis, i.e~for $v_r\le 0$. This is achieved by replacing \eqref{Fric} with the $\epsilon$-dependent characteristic
\begin{equation}\label{SSAppChar}
F_{f,\epsilon}(v_r)=sgn(v_r)\mu_\epsilon(v_r)=sgn(v_r)\bigg(\mu(v_r)-\frac{\epsilon}{\vert v_r\vert}\bigg), \qquad 0<\epsilon\ll1.
\end{equation}
This approximation effectively smooths out the corner at $(v_r,\mu(v_r))=(0,\mu_s)$ and incorporates an asymptote along the line $v_r=0$. Figure \ref{SSCharsCompFig} shows that a sufficiently accurate approximation is obtained in the case of both polynomial and exponential-type characteristics.
We emphasise that the regularisation in \eqref{SSAppChar} is valid only for $v_r< 0$ (or $v_r> 0$). In fact, this smooth approximation leads to a `stick mode' $v_r\approx 0$  since the asymptotic speed $v_r=0$ cannot be reached for $0<\epsilon\ll 1$.

	\begin{rem}
		\label{rem:pws}
		\textcolor{black}{For background on the theory of regularisation in non-smooth problems, we refer the reader to \cite{Bernardo2008,Jeffrey2018} and the many references therein. Our choice of regularisation \eqref{SSAppChar} valid only over the half-plane $v_r < 0$ simplifies the analysis of this autonomous `stick-slip' problem \textit{sufficiently}. 
		On the other hand, we would like to emphasise that non-smooth problems involving external forcing or bifurcations occurring near $v_r = 0$ require regularisations which are valid in an entire \textit{neighbourhood} of $v_r = 0$,  and the analysis becomes (necessarily) more involved. We refer the reader to, e.g., \cite{Bossolini2017,Bonet2016,Kristiansen2015a,Kristiansen2015b,Kristiansen2019,Kristiansen2019c} for rigorous treatments of such problems using techniques from GSPT and blow-up.}
	\end{rem}

\begin{figure}[t]
\captionsetup{format=plain}
\centering
\begin{subfigure}{.5\textwidth}
  \centering
  \includegraphics[trim={0cm 7.5cm 0 7.5cm},scale=0.4]{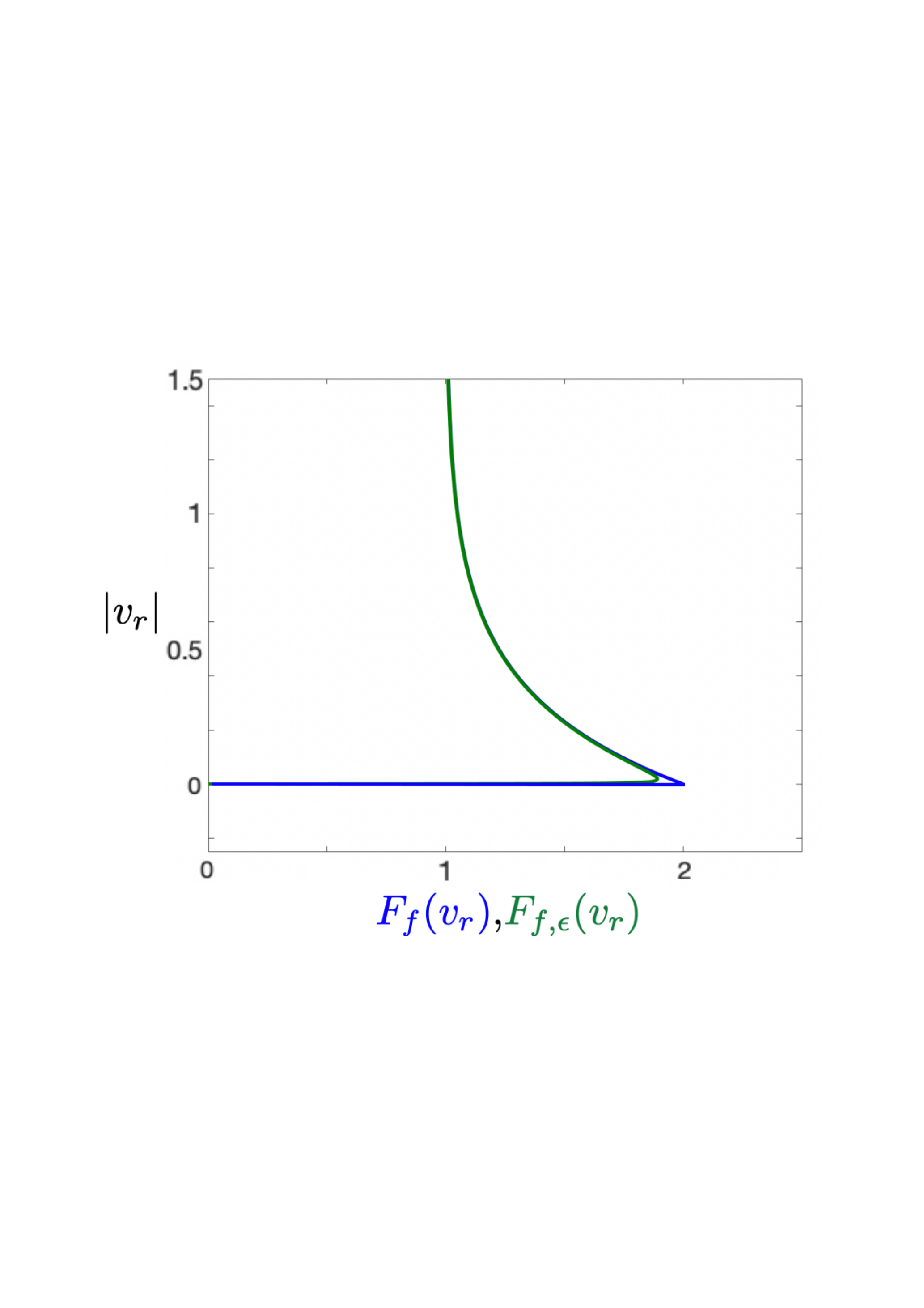}
  \caption{}\label{SSExpAppFig}
  \label{fig:sub1}
\end{subfigure}%
\begin{subfigure}{.5\textwidth}
  \centering
  \includegraphics[trim={0cm 7.5cm 0 7.5cm},scale=0.4]{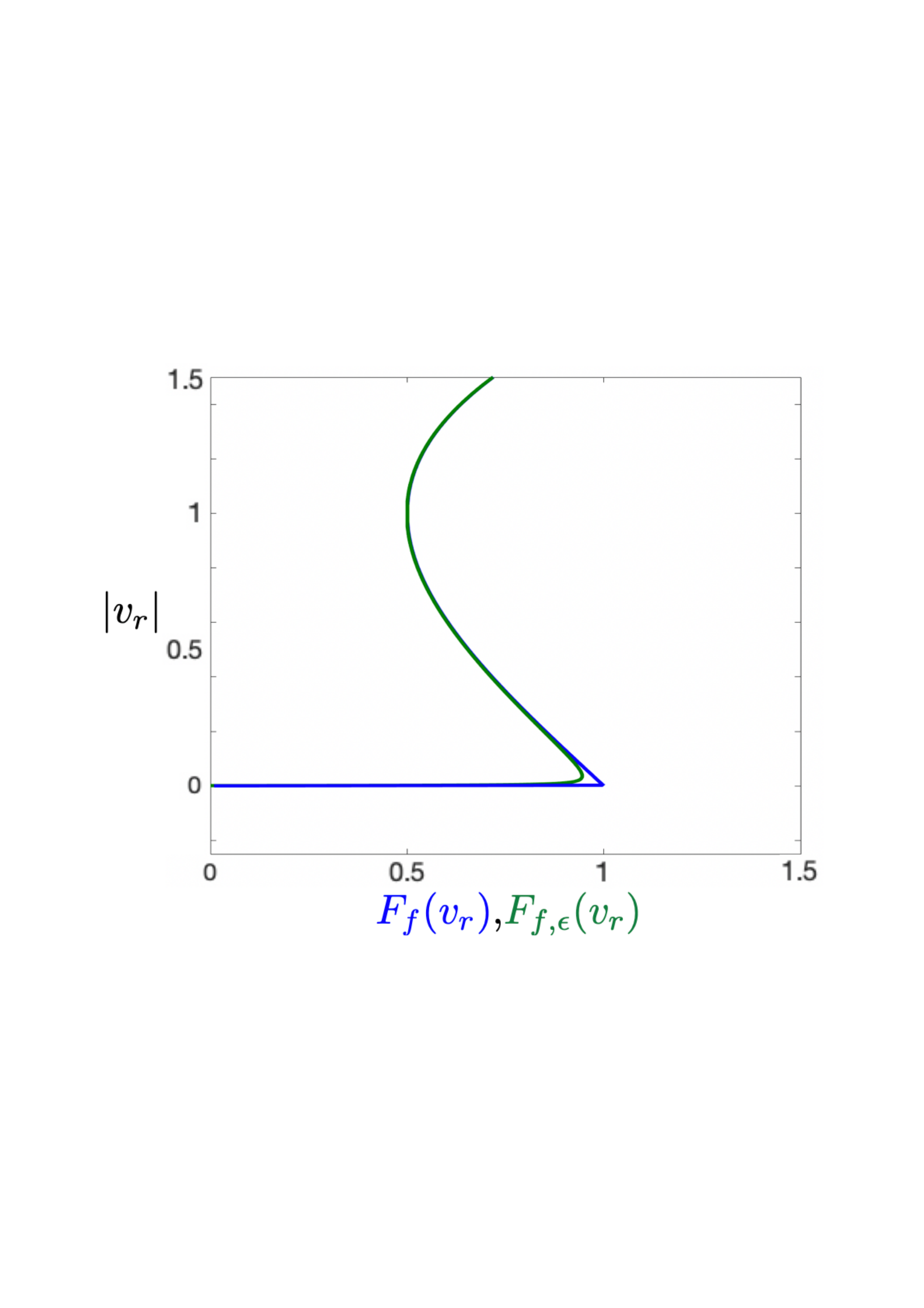}
\caption{}\label{SSStribeckFig}
  \label{fig:sub5}
\end{subfigure}
\caption{In (a): the characteristic \eqref{SSAppChar} with \eqref{ExpSSChar} plotted against the exponential-type (piecewise) characteristic. Parameters: $(\epsilon,v_0,\mu_m,\mu_s,a)=(10^{-3},0.5,1,2,3)$. In (b): the characteristic \eqref{SSAppChar} with \eqref{PolySSChar} plotted against the polynomial-type (piecewise) characteristic. Parameters: $(\epsilon,v_0,v_m,\mu_m,\mu_s)=(10^{-3},0.25,1,0.5,1)$. In each case the original characteristic is plotted in blue, and the approximating characteristic in green. Note that $y=-v_r$.}\label{SSCharsCompFig}
\label{fig:test}
\end{figure}

We introduce a new variable $y=-v_r=v_0-\dot{x}$ in the friction model \eqref{SSEqn} to obtain the following dynamical system,
\begin{align}\label{SSAppSys}
\begin{array}{lcl}
\dot x=v_0-y, \\
\dot y=x-\mu(y)+\frac{\epsilon}{y},
\end{array}
\end{align}
where we used the smooth approximation \eqref{SSAppChar} of the friction characteristic \eqref{Fric} restricted to $v_r<0$ (i.e.~$y>0$) together with the symmetry $\mu(y)=\mu(-y)$.
Again, after making the same time desingularisation $dt = y\,d\bar t$ as in \eqref{desing1} we obtain the system
\begin{align}\label{SS}
\begin{array}{lcl}
x'=(v_0-y)y, \\
y'=(x-\mu(y))y+\epsilon,
\end{array}
\end{align}
where dash denotes differentiation with respect to the new time $\bar t$. System \eqref{SS} is in the general form of a singularly perturbed system \eqref{generalSP} with
\begin{equation}\label{NFG3}
N(x,y)=
\begin{pmatrix}
{v_0-y} \\
{x-\mu(y)}
\end{pmatrix},
\qquad 
f(x,y)=y, \qquad
G(x,y,\epsilon)=
\begin{pmatrix}
{0} \\
{1}
\end{pmatrix},
\end{equation}
and it is equivalent to system \eqref{SSAppSys} in the relevant domain $y>0$. 
With the specific choice of $\mu (v_r)$ given by \eqref{ExpSSChar} or \eqref{PolySSChar} we observe
two-stroke oscillations in \eqref{SSAppSys}, and these show up as two-stroke relaxation oscillations in the corresponding system \eqref{SS}. 
Figure \ref{SSPolyFig} shows the time trace and corresponding relaxation cycle for the polynomial-type characteristic  \eqref{PolySSChar}. 

\begin{rem}
The exponential-type characteristic \eqref{ExpSSChar} has exactly the same features as the characteristic  
\eqref{ExpCharApp} describing two-stroke relaxation oscillation in the transistor oscillator model. 
Consequently, the time trace and corresponding relaxation cycle are similar to those in Figures \ref{ExpTTraceFig} and \ref{ExpPhaseFig}, respectively. 
\end{rem}

\begin{figure}[t]
\captionsetup{format=plain}
\centering
\begin{subfigure}{.5\textwidth}
  \centering
  \includegraphics[trim={2.5cm 8.5cm 0 8.5cm},scale=0.5]{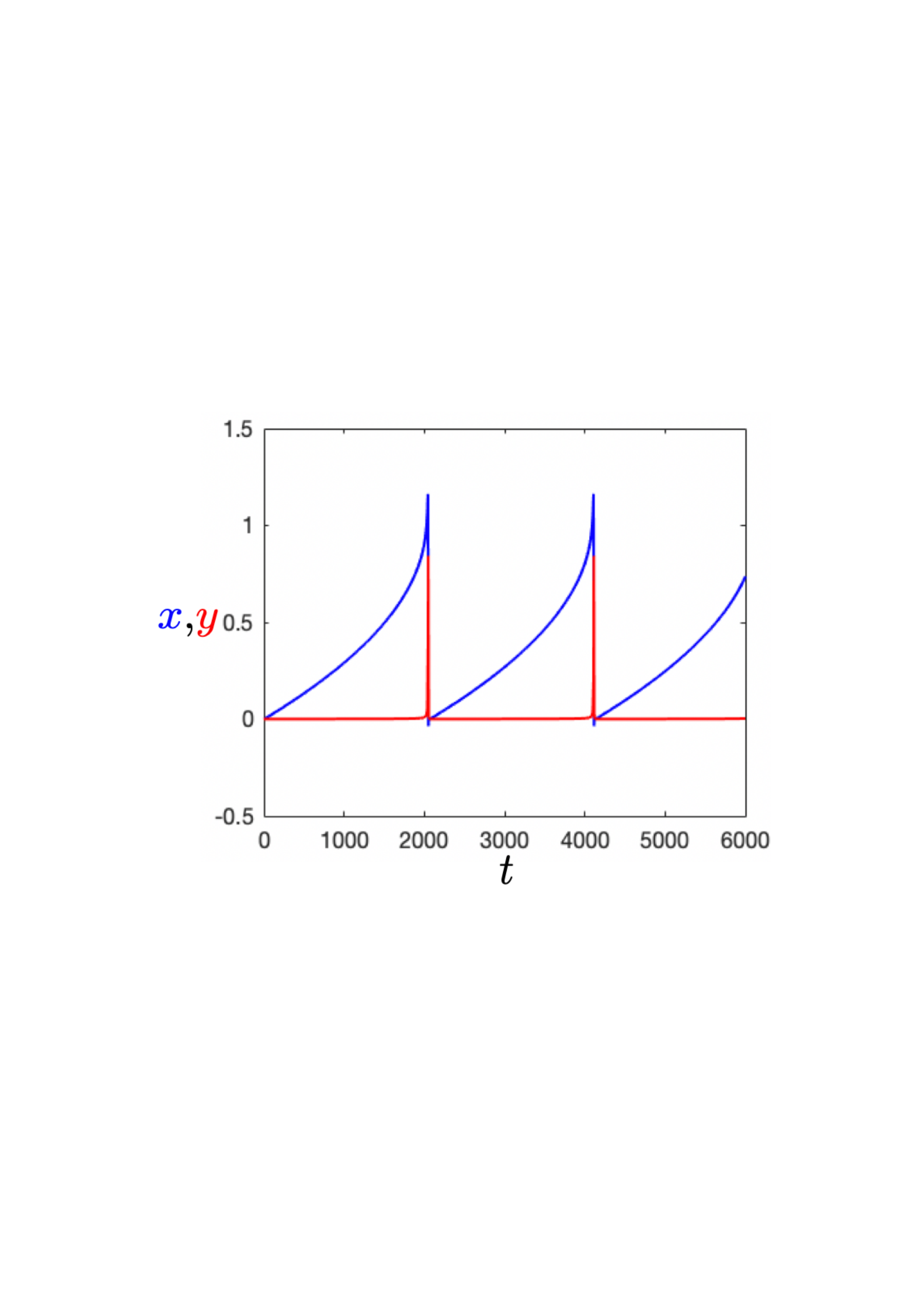}
  \caption{Time trace.}\label{SSExpPhaseFig}
  \label{fig:sub1}
\end{subfigure}%
\begin{subfigure}{.5\textwidth}
  \centering
  \includegraphics[trim={2.5cm 8.5cm 0 8.5cm},scale=0.5]{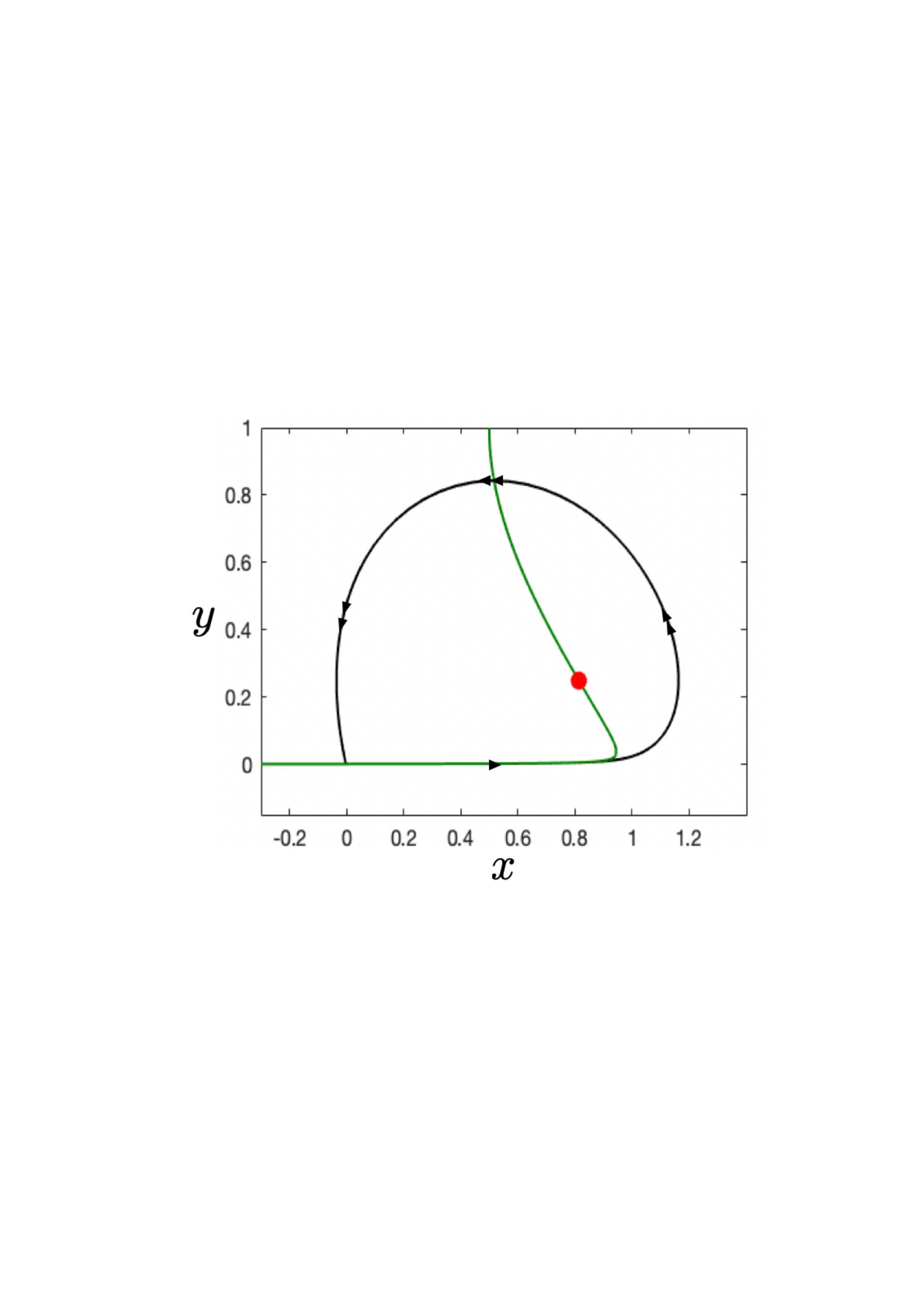}
  \caption{Phase space.}\label{SSExpPolyFig}
  \label{fig:sub5}
\end{subfigure}
\caption{Two-stroke relaxation oscillation in \eqref{SS} with polynomial-type dynamic friction \eqref{PolySSChar}, with parameters identical to those in Figure \ref{SSStribeckFig}.}\label{SSPolyFig}
\label{fig:test}
\end{figure}

\begin{rem}\label{SSMinRem} The dynamic friction coefficients \eqref{ExpSSChar} and \eqref{PolySSChar} have a common linearisation near $v_r=0$,
\[
\mu(v_r)=a-b\vert v_r \vert,
\]
with positive coefficients $a,b$. Since the representative model \eqref{SS2} is in the form \eqref{SS} with
\begin{equation}\label{MinSSChar}
\mu(y)=1-y, \qquad v_0=1,
\end{equation}
it can be viewed as a local minimal `stick-slip' model. More importantly, it serves as a mathematical `canonical' model for two-stroke relaxation oscillations.
\end{rem}

\section{A general GSPT framework}\label{FrameworkSec}

We consider planar perturbation problems of the general form
\begin{equation}\label{Gen1}
z'=H(z;\epsilon)=N(z)f(z)+\epsilon G(z;\epsilon), \qquad z\in\mathbb{R}^2, \qquad 0<\epsilon\ll1,
\end{equation}
where $N(z)=(N_1(z),N_2(z))^T$ and $G(z;\epsilon)=(G_1(z;\epsilon),G_2(z;\epsilon))^T$ are sufficiently smooth vector fields, $f:\mathbb{R}^2\to\mathbb{R}$ is a sufficiently smooth function, and the $'$ notation denotes differentiation with respect to (fast) time $t$. We will frequently denote variables componentwise by $z=(x,y)^T$. We also consider the equivalent problem on a slow timescale $\tau=\epsilon t$:
\begin{equation}\label{Gen2}
\dot{z}=\frac{1}{\epsilon}H(z;\epsilon)=\frac{1}{\epsilon}N(z)f(z)+G(z;\epsilon),
\end{equation}
where the dot notation denotes differentiation with respect to (slow) time $\tau=\epsilon t$. Notice that \eqref{Gen1} and \eqref{Gen2} are equivalent for $\epsilon>0$, but not in the limit $\epsilon\to 0$. In order to study systems of form \eqref{Gen1} respectively \eqref{Gen2}, we require coordinate independent analogues for all the notions of standard GSPT \cite{Kuehn2015,Jones1995}.

\begin{rem}
We emphasise that standard form problems \eqref{SF} can always be written in the general form \eqref{Gen1}: 
\begin{equation}\label{OurSF}
\begin{pmatrix}
{x'} \\
{y'}
\end{pmatrix}
=
\begin{pmatrix}
{f(x,y,\epsilon)} \\
{\epsilon g(x,y,\epsilon)}
\end{pmatrix}
=
\begin{pmatrix}
{1} \\
{0}
\end{pmatrix}
f_0(x,y)+\epsilon
\begin{pmatrix}
{f_R(x,y,\epsilon)} \\
{g(x,y,\epsilon)}
\end{pmatrix},
\end{equation}
where $f_0(x,y)=f(x,y,0)$. In the remainder of this work, we refer to \eqref{OurSF} when referring to problems in the `standard form'.

The converse, however, is not true. In particular, if the vector field $N$ contains isolated singularities \W{(assumed to be bounded away from the set $\{f=0\}$) then there is no \textit{global} flowbox transformation rectifying orbits of $N$ and hence, no global transformation of \eqref{Gen1} into standard form \eqref{OurSF}.} All our example systems, \eqref{SS2}, \eqref{ExpSys} and \eqref{SS}, are of this kind. 
\end{rem}

\subsection{Layer problem}\label{LayerSec}

\begin{defn} (Layer problem). The system
\begin{equation}\label{Layer}
z'=H(z,0)=h(z)=N(z)f(z)
\end{equation}
obtained from \eqref{Gen1} in the limit $\epsilon\to0$ is called the layer problem.
\end{defn}

\textcolor{black}{We impose the following on the system \eqref{Layer} for the remainder of this work.}

\begin{assumption}\label{BasicAss}
The set of equilibria for the layer problem \eqref{Layer} takes the form of a \textcolor{black}{\textit{disjoint}} union \textcolor{black}{
	\[
	S_0=S\cup V_0 ,
	\]}
where
\begin{equation}\label{S}
S=\big{\{}z\in\mathbb{R}^2|f(z)=0\big{\}}
\end{equation}
is a one-dimensional \textcolor{black}{regularly embedded submanifold of $\mathbb R^2$ called the \textit{critical manifold}}, and
\begin{equation}\label{NSing}
V_0=\big{\{}z\in\mathbb{R}^2|N(z)=(0,0)^T \big{\}}
\end{equation}
is the (possibly empty) set of singularities of the vector field $N$. \textcolor{black}{Without loss of generality, we assume that $V_0$ contains only \textit{isolated} singularities.}
\end{assumption}

\begin{rem}\label{rem:S}
The existence of a smooth critical manifold $S$ \eqref{S} defines \textcolor{black}{the} system \eqref{Gen1} as a singular perturbation problem in the GSPT sense; see \cite{Fenichel1979}. \textcolor{black}{Accordingly, we will often refer to $\epsilon \to 0$ as the \textit{singular limit.}} The assumption that $V_0$ is discrete simplifies the analysis without being too restrictive for our purposes. 

The GSPT literature also deals with \textcolor{black}{the case that the `critical manifold' $S$ is an \textit{immersed submanifold}, allowing for self-intersections. By assuming that $S$ is a regularly embedded submanifold in Assumption \ref{BasicAss}, we are ruling out the possibility of self-intersections.} \W{Thus, $Df$ is assumed to be non-vanishing $\forall z\in S$ in this work}.
\end{rem}

\begin{lem}\label{LayerNEquivalence} 
Orbits of the layer problem \eqref{Layer} coincide with orbits of the auxiliary system $z'=N(z)$ on $\mathbb{R}^2\setminus S$.
\end{lem}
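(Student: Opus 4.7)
The proof plan rests on the observation that on $\mathbb{R}^2\setminus S$, the scalar function $f$ is nowhere zero, so the vector fields $h(z)=N(z)f(z)$ and $N(z)$ are pointwise proportional (in fact, parallel) at every point. Parallel smooth vector fields share the same integral curves as unparameterised point sets, differing only by a state-dependent time reparameterisation (and possibly by orientation where the scalar factor is negative). Since Assumption~\ref{BasicAss} guarantees that $S$ separates the regions $\{f>0\}$ and $\{f<0\}$ and $S$ is avoided by definition, the sign of $f$ is locally constant along any orbit in $\mathbb{R}^2\setminus S$, and the reparameterisation is therefore well defined.

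Concretely, the plan is to fix a solution $z(t)$ of the layer problem \eqref{Layer} whose maximal orbit is contained in $\mathbb{R}^2\setminus S$ and to define the time transformation
\begin{equation*}
\tau(t)=\int_{0}^{t} f(z(s))\,ds.
\end{equation*}
Because $f(z(s))\neq 0$ along the orbit and $f$ is continuous, $\tau(t)$ is a strictly monotone $C^{1}$-diffeomorphism from the maximal interval of existence onto its image, with inverse $t(\tau)$. Setting $\tilde z(\tau):=z(t(\tau))$ and applying the chain rule gives
\begin{equation*}
\frac{d\tilde z}{d\tau}=\frac{dz}{dt}\cdot\frac{dt}{d\tau}=N(\tilde z)f(\tilde z)\cdot\frac{1}{f(\tilde z)}=N(\tilde z),
\end{equation*}
so $\tilde z(\tau)$ is a solution of the auxiliary system $z'=N(z)$ tracing out the same point set. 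Running the same argument in reverse, starting from a solution of $z'=N(z)$ whose orbit avoids $S$ and using the rescaling $dt/d\tau=1/f(\tilde z(\tau))$, produces a solution of the layer problem on the identical curve.

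The only subtlety is orientation: on connected components of $\mathbb{R}^2\setminus S$ where $f<0$, the reparameterisation $\tau(t)$ is decreasing, so the direction of traversal of the orbit is reversed between the two systems. This is harmless for the statement, which concerns orbits as sets (i.e., unparameterised trajectories) rather than parameterised solutions, but it is worth recording explicitly since it is precisely the phenomenon alluded to in the footnote following \eqref{SS2} and will be relevant for the desingularised systems \eqref{SS2}, \eqref{ExpSys}, \eqref{SS} studied later. I do not foresee a genuine obstacle here; the main thing to be careful about is distinguishing parameterised solutions from orbits and verifying that no orbit of either system can escape to $S$ and back in finite time within $\mathbb{R}^2\setminus S$, which follows from the uniqueness of solutions applied separately to each system.
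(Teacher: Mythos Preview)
Your proof is correct and follows exactly the same approach as the paper: the paper's proof is a one-line appeal to the time desingularisation $dt=f(z)\,d\bar t$, noting the orientation reversal on $\{f<0\}$, and your proposal simply unpacks this rescaling carefully via the explicit integral $\tau(t)=\int_0^t f(z(s))\,ds$ and the chain rule. Your added remarks on orientation and on $f$ having constant sign on connected components of $\mathbb{R}^2\setminus S$ are accurate elaborations of what the paper leaves implicit.
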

\begin{proof}
 These systems are equivalent via the time desingularisation $dt=f(z)d\bar{t}$ modulo a reversal of orientation on $\{z\in\mathbb{R}^2|f(z)<0\}$.
\end{proof}

Evaluating the Jacobian for \textcolor{black}{the} system \eqref{Layer} along $S$ gives
\[
Dh\big|_{S}=NDf\big|_{S}=
\begin{pmatrix}
{N_1D_xf} & {N_1D_yf} \\
{N_2D_xf} & {N_2D_yf} 
\end{pmatrix}\Bigg|_{S},
\]
which has  a single trivial eigenvalue $\lambda_0=0$ since $\det Dh\big|_{S}=0$, and a single non-trivial eigenvalue \textcolor{black}{at $z\in S$} given by
\begin{equation}\label{EV}
\lambda(z)=\text{Tr}\big(NDf\big|_{\textcolor{black}{z}}\big)=\langle \textcolor{black}{\nabla f},N\rangle\big|_{\textcolor{black}{z}}.
\end{equation}
The corresponding eigenspace of the trivial eigenvalue spans the tangent space $T_zS$ at $z\in S$, i.e. it is orthogonal to the gradient of $f$,
\[
(T_zS_n)^\perp=\text{span}\, \textcolor{black}{\nabla f}\big|_z\,.
\]
The corresponding eigenspace of the nontrivial eigenvalue is spanned by $N(z)$, since 
\[
(Dh)N=(NDf)N=N\langle  \textcolor{black}{\nabla} f,N\rangle=N\lambda\,,\qquad \forall z\in S\,.
\]

\begin{defn} (Normal Hyperbolicity). We say that $z\in S$ is normally hyperbolic if the non-trivial eigenvalue $\lambda(z)\neq0$, and likewise call any submanifold $S_n\subseteq S$ normally hyperbolic if $\lambda(z)\neq0$, $\forall z\in S_n$. A normally hyperbolic submanifold $S_n$ is called attracting if $\lambda(z)<0$ $\forall z\in S_n$, and repelling if $\lambda(z)>0$, $\forall z\in S_n$.
\end{defn}

Let $S_n\subseteq S$ be a normally hyperbolic submanifold of $S$ and $z\in S_n$. The inner product in \eqref{EV}, which is non-zero $\forall z\in S_n$, induces the pointwise splitting
\[
T_z\mathbb{R}^2\big|_{S_n}=T_zS_n\oplus \mathcal{N}_z.
\]
Here $\mathcal{N}_z$ denotes the linear transverse fiber which has base at $z\in S_n$ and is spanned by $N(z)$. The collection of all such fibers forms a linear transverse fiber bundle $\mathcal{N}$, leading to the splitting
\begin{equation}\label{Splitting}
T\mathbb{R}^2\big|_{S_n}=TS_n\oplus \mathcal{N},
\end{equation}
where $TS_n$ is the tangent bundle similarly obtained by collecting all the tangent spaces $T_zS_n$ with base at $z\in S_n$. We note that more generally, one can consider the existence of such a splitting as the defining property of normal hyperbolicity. 

\begin{defn}\label{CPDef} (Contact point). A point $F\in S$ such that
\begin{equation}\label{CPCond}
\lambda(F)=\langle \textcolor{black}{\nabla f},N\rangle\big|_F=0
\end{equation}
is called a contact point.
\end{defn}
A contact point $F\in S$ indicates a loss of normal hyperbolicity of the critical manifold $S$.

\begin{defn}\label{CODef} (Contact order). 
Let $U\subset\mathbb{R}^2$ be a neighbourhood of a contact point $F$ and denote by 
$\mathcal{F}$ \textcolor{black}{a} segment of the corresponding layer orbit through $F$ in $U$. 
\textcolor{black}{Given local parameterisations $c(s)$ and $p(s)$, $s \in [-s_0,s_0]$, of $\mathcal{F}$ and $S$, respectively, such that $c(0) = p(0) = F$. The layer flow has contact order $\sigma_F\in\mathbb{N}^+$ at $F$ with $S$ if
	\[
	c^{(m)}(0) = p^{(m)}(0) \text{ for } m = 1, \ldots \sigma_F, \qquad  c^{(\sigma_F+1)}(0) \neq p^{(\sigma_F+1)}(0).
	\]}
\end{defn}

A contact point $F\in S$ is a point of tangency between the layer flow and $S$, and the contact order $\sigma_F$ describes the degree of the tangency.

\begin{prop}\label{CPOrder}
Let $F\in S$ denote a contact point of system \eqref{Gen1}, and assume without loss of generality that $S$ is given as a graph $y=\varphi(x)$ locally near $F$. Then the contact order at 
$F$ is given by
\begin{equation}\label{CO1}
\sigma_F=\text{min}\big{\{}n\in\mathbb{N}^+\big|D^{(n)}_{x}\langle \textcolor{black}{\nabla} f (x,\varphi(x)),N(x,\varphi(x))\rangle\big|_{F}\neq0\big{\}}.
\end{equation}
\end{prop}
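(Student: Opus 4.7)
The plan is to reduce the tangency order $\sigma_F$ at $F$ to an order-of-vanishing statement for the graph-difference $d(x):=\psi(x)-\varphi(x)$ between $S$ and the orbit $\mathcal{F}$, then relate that to iterated directional derivatives of $f$ along $N$, and finally translate the resulting characterisation into the $D_x^n$ derivatives of $\chi(x):=\langle\nabla f,N\rangle(x,\varphi(x))$ appearing in \eqref{CO1}. Writing $F=(x_0,\varphi(x_0))$, I would first fix graph parameterisations $p(x)=(x,\varphi(x))$ of $S$ and $c(x)=(x,\psi(x))$ of the layer orbit $\mathcal{F}$ through $F$, with $\psi$ solving $\psi'=N_2/N_1|_{(x,\psi)}$ and $\psi(x_0)=\varphi(x_0)$. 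This is justified because $D_yf|_F\neq 0$ (from the graph hypothesis on $S$), and the contact condition $\langle\nabla f,N\rangle|_F=0$ together with $N(F)\neq 0$ (Assumption \ref{BasicAss}, since $V_0\cap S=\emptyset$) forces $N_1(F)\neq 0$. With these parameterisations, Definition \ref{CODef} reduces to the statement that $\sigma_F+1$ equals the order of vanishing of $d(x)$ at $x_0$.

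Next, I would exploit the local factorisation $f(x,y)=(y-\varphi(x))g(x,y)$ with $g(F)=D_yf(F)\neq 0$ (valid near $F$ by the implicit function theorem) to deduce that $q(x):=f(x,\psi(x))=d(x)g(x,\psi(x))$ vanishes to order exactly $\sigma_F+1$ at $x_0$. Reparametrising the same orbit by the auxiliary time $s$ via $z'(s)=N(z(s))$, $z(0)=F$, the map $x\mapsto s$ is a local diffeomorphism near $F$ (since $dx/ds=N_1(F)\neq 0$), so $\tilde q(s):=f(z(s))$ shares the order of vanishing $\sigma_F+1$ at $s=0$. Writing $\mathcal L_N g:=\langle\nabla g,N\rangle$ for the directional derivative along $N$, the chain rule gives $\tilde q^{(k)}(0)=(\mathcal L_N^k f)(F)$; combined with $\mathcal L_N f=\Lambda:=\langle\nabla f,N\rangle$ (so that $\mathcal L_N^k f=\mathcal L_N^{k-1}\Lambda$ for $k\geq 1$), this yields
\begin{equation*}
(\mathcal L_N^j\Lambda)(F)=0 \text{ for } j=0,\ldots,\sigma_F-1, \qquad (\mathcal L_N^{\sigma_F}\Lambda)(F)\neq 0.
\end{equation*}

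To translate back to derivatives of $\chi$, I would introduce the auxiliary function $h(x):=\Lambda(x,\psi(x))$. Because $\Lambda$ is smooth and $d=\psi-\varphi$ vanishes to order $\sigma_F+1$ at $x_0$, Taylor expansion of $\Lambda$ in the $y$-variable yields $h(x)-\chi(x)=D_y\Lambda(x,\varphi(x))\,d(x)+O(d(x)^2)$, which vanishes to order at least $\sigma_F+1$; hence $\chi^{(k)}(x_0)=h^{(k)}(x_0)$ for all $k\leq\sigma_F$. Writing $h(x)=\tilde\Lambda(s(x))$ with $\tilde\Lambda^{(j)}(0)=(\mathcal L_N^j\Lambda)(F)$ and applying the chain rule under the vanishing conditions above, one obtains $h^{(k)}(x_0)=0$ for $k<\sigma_F$ and $h^{(\sigma_F)}(x_0)=(\mathcal L_N^{\sigma_F}\Lambda)(F)/N_1(F)^{\sigma_F}\neq 0$. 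Combining these identities gives $\chi^{(n)}(x_0)=0$ for $n<\sigma_F$ while $\chi^{(\sigma_F)}(x_0)\neq 0$, which is exactly \eqref{CO1}.

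The main technical obstacle is linking the natural ``along-$N$'' Lie-derivative characterisation arising in the second step with the ``along-$S$'' ordinary $D_x^n$ derivatives in \eqref{CO1}. These two viewpoints agree to leading order because $N$ is tangent to $S$ at a contact point, but higher-order terms differ in general; the auxiliary function $h$ resolves the mismatch by being simultaneously a high-order approximation of $\chi$ (through $\psi-\varphi$ vanishing to order $\sigma_F+1$) and a smooth reparameterisation of $\Lambda$ along an $N$-orbit, where iterated Lie derivatives directly generate its Taylor coefficients with explicit $N_1(F)^{-k}$ factors coming from the chain rule.
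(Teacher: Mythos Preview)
Your proof is correct. Both your argument and the paper's ultimately rest on the same two facts: (i) the function $f$ evaluated along the layer orbit through $F$ vanishes to order exactly $\sigma_F+1$, and (ii) $\Lambda=\langle\nabla f,N\rangle$ evaluated along the orbit and along $S$ differ by a term vanishing to order $\sigma_F+1$. However, the paper packages this far more economically: instead of staying in the original coordinates, it introduces the straightening coordinate $u=f(x,y)$ (Lemma~\ref{FlatSLem}), after which $S=\{u=0\}$, the layer orbit becomes the graph $u=u(x)$, and $\tilde N_2(x,0)$ coincides \emph{identically} with your $\chi(x)=\Lambda(x,\varphi(x))$. The contact order is then read off directly from $u'=\tilde N_2/\tilde N_1$, using only that $\tilde N_1(F)\neq 0$ and that $u(x)$ vanishes to order $\sigma_F+1$. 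Your detour through the time-$s$ parametrisation and iterated Lie derivatives $\mathcal L_N^k$ is valid but unnecessary: one can bypass it by observing directly that $q'(x)=h(x)/N_1(x,\psi(x))$, which immediately gives the order of vanishing of $h$ from that of $q'$---and this is precisely what the paper's flat-coordinate computation encodes. What your approach buys is an explicit Lie-derivative characterisation $(\mathcal L_N^j\Lambda)(F)=0$ for $j<\sigma_F$, which is geometrically appealing and coordinate-free; what the paper's approach buys is brevity, since the single coordinate change $u=f(x,y)$ simultaneously flattens $S$ and turns $\Lambda|_S$ into a component of the vector field.
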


\begin{proof}
	\textcolor{black}{We begin by introducing coordinates for which $S$ is  locally rectified along the $x-$axis.}	

\begin{lem}\label{FlatSLem} Given $z=(x,y)\in S$ such that $D_yf|_z\neq0$. There exists a smooth change of coordinates such that \textcolor{black}{the system \eqref{Gen1} can be written locally as
\begin{equation}\label{FlatS}
\begin{split}
\begin{pmatrix}
{x'} \\
{u'}
\end{pmatrix}
&:=
\tilde{N}(x,u)u+\epsilon\tilde{G}(x,u,\epsilon) \\
&=
\begin{pmatrix}
{N_1(x,M(x,u))} \\
{\langle \textcolor{black}{\nabla} f(x,M(x,u),N(x,M(x,u))\textcolor{black}{)}\rangle}
\end{pmatrix}
u+\epsilon
\begin{pmatrix}
{G_1(x,M(x,u),\epsilon)} \\
{\langle  \textcolor{black}{\nabla} f(x,M(x,u)),G(x,M(x,u),\epsilon)\rangle}
\end{pmatrix}\,.
\end{split}
\end{equation}
The critical manifold $S=\{(x,u)\in\mathbb{R}^2\,|\,u=0\}$ is straightened locally along the $x$-axis.}
\end{lem}
\begin{proof}
Define $u=f(x,y)$, which has a locally well-defined inverse $y=M(x,u)$, since $D_yf|_z\neq0$. Transforming \eqref{Gen1} into the new $(x,u)$-coordinate system gives \textcolor{black}{the system \eqref{FlatS}.}
\end{proof}
\noindent
For system \eqref{FlatS} with a flat manifold $S$, the condition for tangency between the layer flow and $S$ at $F$ is
\[
\frac{du}{dx}\bigg|_F=\frac{\tilde{N}_2}{\tilde{N}_1}\bigg|_F=0\,,
\]
where $\tilde{N}_2|_F= \lambda(F)=0$ and $\tilde{N}_1|_F\neq 0$ (by Assumption \ref{BasicAss}). The contact order is
\[
\sigma_F=\text{min}\bigg{\{}n\in\mathbb{N}^+\bigg|D^{(n)}_x\bigg(\frac{du}{dx}\bigg)\bigg|_F\neq0\bigg{\}},
\]
which can be simplified by noting that
\[
D^{(n)}_x\bigg(\frac{du}{dx}\bigg)\bigg|_F=0 \qquad \iff \qquad D^{(n)}_x\tilde{N}_2(x,\varphi(x))\big|_F=0,
\]
and hence
\begin{equation}\label{COFlatS}
\sigma_F=\text{min}\big{\{}n\in\mathbb{N}^+\big|D^{(n)}_x\tilde{N}_2(x,\varphi(x))\big|_F\neq0\big{\}}.
\end{equation}
In terms of the original coordinates, we have
\[
\sigma_F=\text{min}\big{\{}n\in\mathbb{N}^+\big|D^{(n)}_{x}\langle  \textcolor{black}{\nabla} f(x,\varphi(x)),N(x,\varphi(x))\rangle\big|_{F}\neq0\big{\}},
\]
as required.
\end{proof}

\begin{rem}
Lemma \ref{FlatSLem} holds locally about any $z\in S$, i.e. one can locally rectify $S$ near contact points or normally hyperbolic points. 
\end{rem}

\begin{rem}\label{rem:lambda}
An order one contact point $F$ implies that the non-trivial eigenvalue $\lambda$ \eqref{EV}  switches sign along $S$ as one `crosses'  $F$. The converse is also true, i.e.~a switch in stability of $S$ can occur only via a contact with the layer flow.
\end{rem}

\begin{rem}\label{FoldRem} 
In standard \textcolor{black}{form} problems \eqref{OurSF}, \textcolor{black}{order one} contact points are {\em fold points}. A fold point is generic if it satisfies the non-degeneracy and transversality conditions,
\begin{equation}\label{FoldNondegeneracy}
D^2_xf_0\big|_F\neq0 \qquad \text{and} \qquad D_yf_0\big|_F\neq0.
\end{equation}
Since
\[
D_x^2f_0\big|_F=D_x\langle  \textcolor{black}{\nabla} f,N\rangle\big|_F\neq0,
\]
generic fold points are order one contact points, i.e. $\sigma_F=1$. Conversely, if $F$ is an order one contact point in \eqref{OurSF}, then $\lambda(F)=D_xf_0|_F=0$ which implies $D_yf_0|_F\neq0$ (\textcolor{black}{c.f. Remark \ref{rem:S}}), and $D_x\langle  \textcolor{black}{\nabla} f,N\rangle|_F=D_x^2f_0|_F\neq0$, since $\sigma_F=1$. Hence order one contact points are generic folds in standard form problems \eqref{OurSF}.
\end{rem}

\begin{app} (Two-stroke relaxation oscillator model \eqref{SS}).
Recall that all our two-stroke oscillator models introduced are of the general form \eqref{Gen1} with $N(z)$, $f(z)$ and $G(z)$ defined in \eqref{NFG3}. The distinguishing feature is the choice of $v_0$ and $\mu(y)$ as highlighted in Table~\ref{table:models}. 
\begin{table}[t]
\centering
\begin{tabular}{| c | c | c |} 
 \hline
  model & $v_0$ & $\mu (y)$  \\
 \hline
 minimal model  \eqref{NFG1} & $1$  & $1-y$   \\
 electronic model \eqref{NFG2} & $-y_\ast$ & $x_\ast e^{-ay}$\\
  stick-slip model \eqref{NFG3}, exponential -type \eqref{ExpSSChar} & $v_0$ & $\mu_m+(\mu_s-\mu_m)e^{-a y}$  \\
  stick-slip model \eqref{NFG3}, polynomial-type \eqref{PolySSChar} & $v_0$ & $\mu_s-\frac{3(\mu_s-\mu_m)}{2v_m}y+\frac{(\mu_s-\mu_m)}{2v_m^3}y^3$  \\
 \hline
\end{tabular}
\caption{The two-stroke oscillator \eqref{SS}  for the different models.}
\label{table:models}
\end{table}
For all these models, the critical manifold is given by
\[
S=\big{\{}(x,y)\in\mathbb{R}^2\;|\;y=0\big{\}},
\]
and the set $V_0$ contains the single point
\[
p_0=\big(\mu(v_0),v_0 \big)\,.
\]
Assumption \ref{BasicAss} is satisfied for $v_0\neq0$, since all models assume $v_0>0$. The Jacobian of the layer problem at $p_0\notin S$ evaluates to
\[
Dh(p_0)=DN(p_0)f(p_0)= 
\begin{pmatrix}
{0} & {-v_0} \\
{v_0} & {-v_0\mu'(v_0)}
\end{pmatrix}.
\]
We have $\det Dh(p_0)=v_0^2>0$, and the trace is given by
\begin{equation}\label{pTrace}
\text{tr}Dh(p_0)=-v_0\mu'(v_0).
\end{equation}
The expression \eqref{pTrace} is always positive for all models except the stick-slip oscillator \eqref{SS} with polynomial-type characteristic \eqref{PolySSChar}, for which \eqref{pTrace} is positive only for $v_0\in(0,v_m)$. We restrict to values in this regime in this work (see Section \ref{3ScaleSec}). Thus the equilibrium $p_0$ is an unstable node or focus (in all cases). The non-trivial eigenvalue along $S$ is given by
\[
\langle  \textcolor{black}{\nabla} f,N\rangle\big|_{y=0}=x-\mu(0),
\]
and so the critical manifold decomposes into two normally hyperbolic branches
\[
S^a=\big{\{}(x,0)\,\big|\,x<\mu(0) \big{\}}, \qquad S^r=\big{\{}(x,0)\,\big|\,x>\mu(0)\big{\}},
\]
which are attracting and repelling, respectively. For all models, we have $\mu(0)>0$.
We also identify a single contact point
\[
F=(x_F,y_F)=(\mu(0),0),
\]
which we can classify as order one by noting that
\begin{equation}\label{Order1}
D_x\langle  \textcolor{black}{\nabla} f,N\rangle\big|_F=1\neq0 \qquad \implies \qquad \sigma_F=1.
\end{equation}
The layer problem dynamics near the contact point $F$ are sketched in Figure \ref{ExpAppSingFig1} for the case of the electronic two-stroke relaxation oscillator system \eqref{ExpSys}, which is obtained by substituting $\mu(y)=x_*e^{-ay}$ into \eqref{SS}; see Table \ref{table:models}.
\end{app}
\begin{figure}[t]
  \centering
  \includegraphics[trim={0 8.5cm 0 8.5cm},scale=.6]{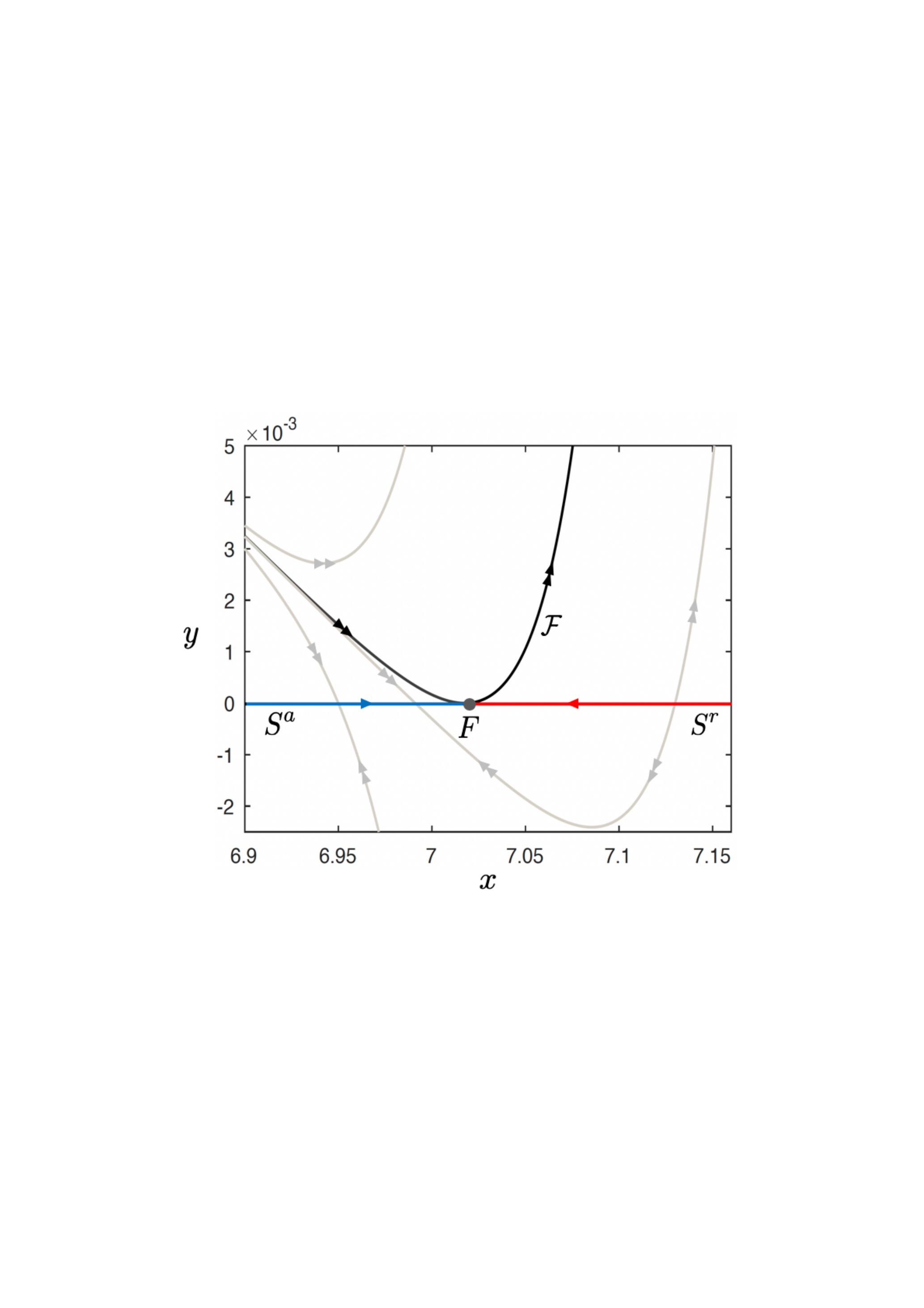}
\caption{Singular limit dynamics near the jump-off point $F$ of system \eqref{ExpSys}. Parameter values are the same as Figure \ref{EMCompFig} with $\epsilon=0$.}
\label{ExpAppSingFig1}
\end{figure}

\subsection{Reduced problem}\label{ReducedProbSec}

Consider system \eqref{Gen2} which evolves on the slow timescale $\tau$. Taking the singular limit $\epsilon\to 0$ becomes a non-trivial task:
\begin{equation}\label{LimGen2}
\dot{z}=
\lim_{\epsilon\to0}\bigg(\frac{1}{\epsilon}N(z)f(z)+G(z;\epsilon)\bigg) .
\end{equation}
Observe that in order for \eqref{LimGen2} to be well defined, the phase space must be restricted to $S$, i.e.~we need $f(z)$ to vanish, which implies that the vector field of \eqref{LimGen2} must lie in the tangent bundle $TS$ of the critical manifold $S$. 
\textcolor{black}{Note that the vector $G(z;0)\in T \mathbb R^2$ does, in general, \textit{not} lie in $TS$. Hence one must determine the component of the vector $G(z;0)$ in $TS$ to define the corresponding vector field.}

\textcolor{black}{Given a normally hyperbolic submanifold $S_n \subseteq S$,} the existence of the splitting \eqref{Splitting} provides the means to define an appropriate vector field \textcolor{black}{in $TS_n$}, because it guarantees the existence of a unique projection operator
\[
\Pi^{S_n}_N:T\mathbb{R}^2\big|_{S_n}=TS_n\oplus\mathcal{N}\to TS_n\,,
\]
i.e. $\Pi^{S_n}_N$  projects a vector with base point $z\in S_n$ along $\mathcal{N}_z$ onto $T_zS_n$; see Figure~\ref{ProjFig}.  This allows for the following definition, which is originally due to Fenichel \cite{Fenichel1979} \textcolor{black}{(see also \cite{Goeke2014})}.

\begin{defn}\label{ReducedDef} (Reduced Problem). For $z\in S_n$, the singular limit problem of system \eqref{Gen2}
is defined by
\begin{equation}\label{RP1}
\dot{z}=
\Pi_N^{S_n}\frac{\partial}{\partial \epsilon}H(z;\epsilon)\bigg|_{S_n\times\{0\}}=\Pi_N^{S_n} G(z;0)\bigg|_{S_n}\,,
\end{equation}
and it is called the reduced problem for \eqref{Gen2}.
\end{defn}

\begin{prop}\label{RPProp} 
The projection operator in \eqref{RP1} is given by
\begin{equation}\label{Proj}
\Pi_N^{S_n}=I_2-\frac{NDf}{\langle  \textcolor{black}{\nabla} f,N\rangle}\bigg|_{S_n}.
\end{equation}
An equivalent formulation of the reduced problem \eqref{RP1} is given by
\begin{equation}\label{RP2}
\dot{z}=\left[\frac{\det(N|G)}{\langle  \textcolor{black}{\nabla} f,N\rangle}
\begin{pmatrix}
{-D_yf} \\
{D_xf}
\end{pmatrix}\right]\Bigg|_{S_n},
\end{equation}
where $\det(N|G)$ denotes the determinant of the matrix with columns $N$ and $G$.
\end{prop}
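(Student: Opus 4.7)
The plan is to handle the two claims separately. First I would establish that the operator $\Pi_N^{S_n} = I_2 - \frac{NDf}{\langle \nabla f, N\rangle}|_{S_n}$ is the unique projection along $\mathcal{N}$ onto $TS_n$ by verifying the two defining properties: it annihilates the fiber $\mathcal{N}$ and fixes $TS_n$. Since $\mathcal{N}_z$ is spanned by $N(z)$, it suffices to compute
\[
\Pi_N^{S_n} N = N - \frac{N (Df \cdot N)}{\langle \nabla f, N\rangle} = N - \frac{N \langle \nabla f, N\rangle}{\langle \nabla f, N\rangle} = 0,
\]
which is well-defined precisely because $\langle \nabla f, N\rangle \neq 0$ on the normally hyperbolic set $S_n$. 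Conversely, for any $v \in T_zS_n$, we have $\langle \nabla f, v\rangle = 0$ (since $S_n \subseteq S = \{f = 0\}$), so $Df \cdot v = 0$ and hence $\Pi_N^{S_n} v = v$. By uniqueness of projections associated to a direct sum decomposition, this is the operator sought.

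Next, I would substitute $G(z;0)$ into this formula to derive \eqref{RP2}. Writing
\[
\Pi_N^{S_n} G = \frac{1}{\langle \nabla f, N\rangle}\bigl(G\, \langle \nabla f, N\rangle - N\, \langle \nabla f, G\rangle\bigr),
\]
the task reduces to a purely algebraic identity in $\mathbb{R}^2$: expanding componentwise with $N = (N_1, N_2)^T$, $G = (G_1, G_2)^T$ and $\nabla f = (D_xf, D_yf)^T$, the first component becomes
\[
G_1(N_1 D_xf + N_2 D_yf) - N_1(G_1 D_xf + G_2 D_yf) = (G_1 N_2 - N_1 G_2)\, D_yf = -\det(N|G)\, D_yf,
\]
and analogously the second component evaluates to $\det(N|G)\, D_xf$. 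This produces exactly the right-hand side of \eqref{RP2}. The calculation is the planar instance of the identity $a\langle b,c\rangle - b\langle a,c\rangle = (a\wedge b)\, J c$, where $J$ is the standard symplectic rotation; the determinant $\det(N|G)$ plays the role of the wedge.

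No step here is genuinely hard --- the proof is essentially a verification --- but the one subtlety worth flagging is that the formula for $\Pi_N^{S_n}$ is only valid on $S_n$, since at a contact point the denominator $\langle \nabla f, N\rangle$ vanishes (cf.\ Definition \ref{CPDef} and Remark \ref{rem:lambda}). This is consistent with the fact that the reduced problem is only defined on the normally hyperbolic locus, and signals the need for a separate desingularisation to extend the dynamics across contact points, which is presumably addressed elsewhere in the paper.
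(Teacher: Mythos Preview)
Your proof is correct. The second half (deriving \eqref{RP2} from \eqref{Proj}) is essentially identical to the paper's argument: both compute the components of $\langle \nabla f, N\rangle G - N\langle \nabla f, G\rangle$ directly and recognise the determinant $\det(N|G)$ in the result.

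For the first half, your route differs slightly from the paper's. You \emph{verify} that the operator $I_2 - NDf/\langle\nabla f,N\rangle$ has the correct kernel (it annihilates $N$) and the correct range (it fixes $T_zS_n$), then invoke uniqueness of the projection associated to the splitting \eqref{Splitting}. The paper instead \emph{derives} the formula: it writes $G = \Pi_N^{S_n}G + \alpha N$ using the splitting, determines the scalar $\alpha$ by taking the orthogonal projection of both sides onto $\nabla f$ (which kills the $TS_n$ component), and then rearranges. Your verification is arguably cleaner and more direct for a reader who is happy to take the formula as given; the paper's derivation has the advantage of showing where the formula comes from geometrically, which aligns with their Figure~\ref{ProjFig}. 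Both are elementary linear algebra and neither is preferable in any substantive way.
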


\begin{figure}[h!]
	\captionsetup{format=plain}
	\hspace{0em}\centerline{\includegraphics[trim={0 6cm 0 6cm},scale=.5]{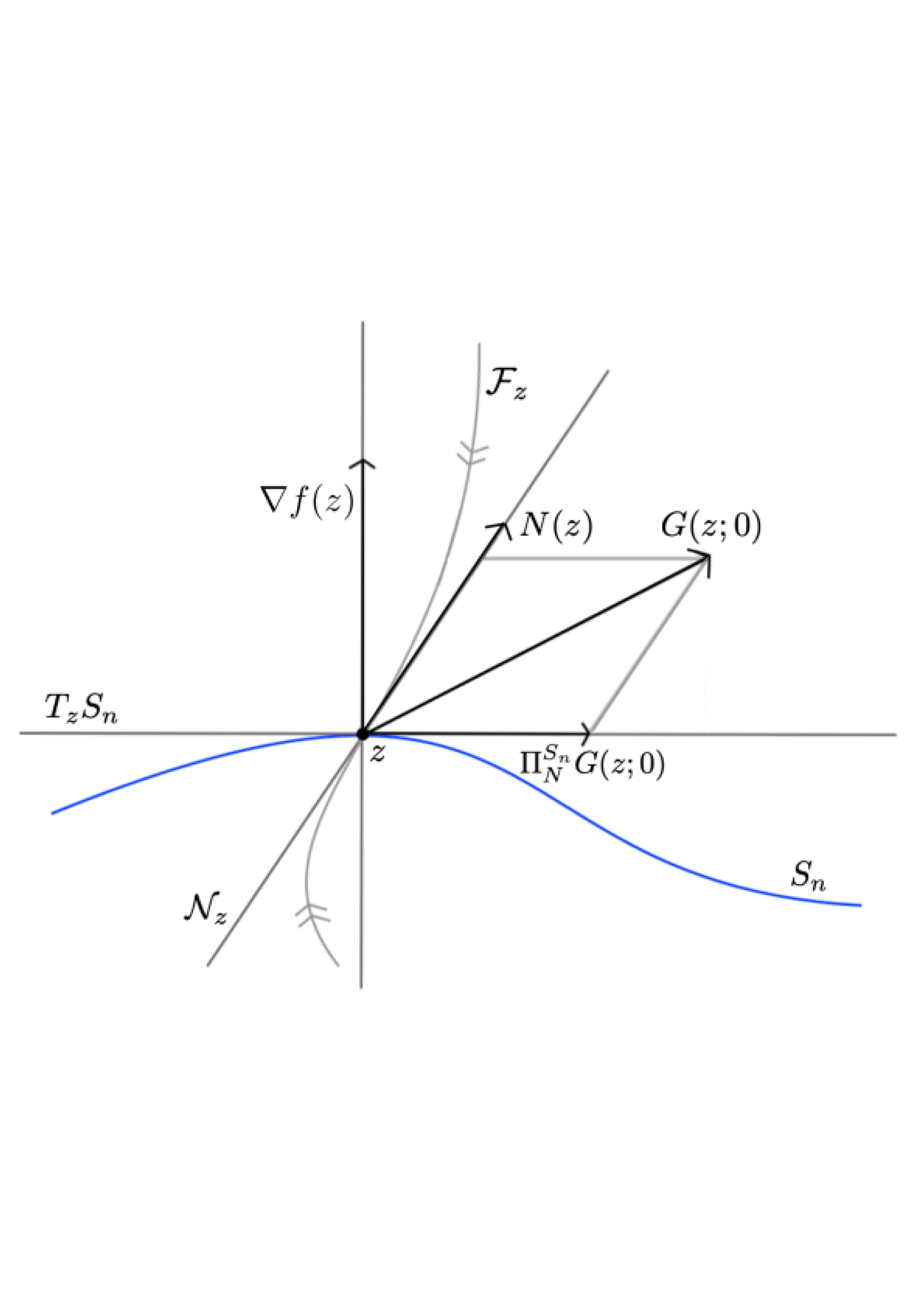}}
	\caption{Oblique projection of the vector $G(z;0)$ along $\mathcal{N}_z$ onto $\textcolor{black}{T_zS_n}$ at a normally hyperbolic point $z\in S_n$.}
	\label{ProjFig}
\end{figure}

\begin{proof} The form of the projection operator $\Pi_N^{S_n}$ reflects the definition of an oblique projection as shown in Figure \ref{ProjFig}, \textcolor{black}{i.e.~based on the splitting \eqref{Splitting}, the vector $G(z;0)$ has a unique representition
	\[
	G(z;0) = \Pi_N^{S_n} G(z;0) + \alpha N(z) 
	\]
	where $\alpha\in\mathbb{R}$ is obtained by noting that
	\[
	\frac{||\text{proj}_{\nabla f(z)} G(z;0)||}{||\text{proj}_{\nabla f(z)} N(z)||} = \alpha = \frac{\langle \nabla f(z), G(z;0) \rangle}{\langle \nabla f(z), N(z) \rangle} .
	\]
Note that $\text{proj}_{\nabla f(z)} N(z) \neq 0$ given normal hyperbolicity and Assumption~\ref{BasicAss}. Rearranging gives
	\[
	\begin{split}
	\Pi_N^{S_n} G(z;0) &= G(z;0) - \left( \frac{\langle \nabla f(z), G(z;0) \rangle}{\langle \nabla f(z), N(z) \rangle} \right) N(z) \\
	&= \left[I_2 - \frac{N(z) Df(z)}{\langle \nabla f(z) , N(z) \rangle} \right] G(z;0) ,
	\end{split}
	\]
and the expression in \eqref{Proj} follows.} Equivalence of \eqref{RP1} and \eqref{RP2} then follows from
\[
\begin{split}
\langle  \textcolor{black}{\nabla} f,N\rangle\big|_{S_n}\dot{z}&=\left[\big(\langle  \textcolor{black}{\nabla} f,N\rangle I_2-NDf\big)G\right]\big|_{S_n} \\
&=
\left[\begin{pmatrix}
{\langle  \textcolor{black}{\nabla} f,N\rangle-N_1D_xf} & {-N_1D_yf} \\
{-N_2D_xf} & {\langle  \textcolor{black}{\nabla} f,N\rangle-N_2D_yf}
\end{pmatrix}
\begin{pmatrix}
{G_1} \\
{G_2}
\end{pmatrix}\right]
\Bigg|_{S_n} \\
&=\left[\det(N|G)
\begin{pmatrix}
{-D_yf} \\
{D_xf}
\end{pmatrix}\right]\Bigg|_{S_n},
\end{split}
\]
as required.
\end{proof}

\begin{rem} For a slow-fast system in standard form \eqref{OurSF},
\[
\Pi_N^{S_n}=
\left[\begin{pmatrix}
{1} & {0} \\
{0} & {1}
\end{pmatrix}
-\frac{1}{D_xf_0}
\begin{pmatrix}
{D_xf_0} & {D_yf_0} \\
{0} & {0}
\end{pmatrix}\right]\Bigg|_{S_n}
=
\begin{pmatrix}
{0} & {-(D_xf_0)^{-1}D_yf_0} \\
{0} & {1}
\end{pmatrix}\Bigg|_{S_n},
\]
and so the reduced problem is
\begin{equation}\label{StndRP}
\begin{array}{lcl}
\dot{x}=-(D_xf_0)^{-1}(D_yf_0)g(x,y,0), \\
\dot{y}=g(x,y,0),
\end{array}
\end{equation}
with $(x,y)\in S_n$. System \eqref{StndRP} is the expression for the reduced vector field for standard form slow-fast problems\textcolor{black}{; see e.g. \cite{Kuehn2015,Jones1995}}.
\end{rem}

\begin{rem} Since $D\!f|_z\neq(0,0)$ $\forall z\in S$, it is clear from the form of system \eqref{RP2} that equilibria in the reduced problem occur if and only if $\det(N|G)=0$. Thus, the reduced dynamics on $S$ can be entirely characterised via the scalar functions $\langle  \textcolor{black}{\nabla} f,N\rangle$ and $\det(N|G)$.
\end{rem}

Note that the projection operator \eqref{Proj} is \textcolor{black}{not defined} where the splitting \eqref{Splitting} breaks down, i.e.~at contact points $F\in S$ where $S$ loses normal hyperbolicity. In order to study the reduced problem \eqref{RP1} near contact points $F$, we make a time desingularisation $d\tau=-\langle  \textcolor{black}{\nabla} f,N\rangle|_{S}\,d\bar{\tau}$, obtaining the {\em desingularised problem},
\begin{equation}\label{DesingRP}
\dot{z}
=
-\left[\big(\langle  \textcolor{black}{\nabla} f,N\rangle I_2-NDf \big)G\right]\big|_{S}=\left[\det(N|G)
\begin{pmatrix}
{D_yf} \\
{-D_xf}
\end{pmatrix}\right]\Bigg|_{S},
\end{equation}
where the overdot notation now denotes differentiation with respect to $\bar{\tau}$. The desingularised problem \eqref{DesingRP} is equivalent to the reduced problem \eqref{RP1} modulo a reversal of orientation when $\langle  \textcolor{black}{\nabla} f,N\rangle|_{S}>0$, i.e. on repelling submanifolds $S^r$ of $S$. Importantly, the desingularised problem \eqref{DesingRP} is well defined in a neighbourhood of a contact point $F$, which makes it a valuable tool for analysing the reduced problem in the case of loss of normal hyperbolicity.

\begin{defn}\label{RCPDef} (Regular Contact Point). Let $F\in S$ be a contact point with $\sigma_F=1$. $F$ is called a regular contact point if it satisfies the following equivalent conditions:
\begin{equation}\label{CPRegCond}
\det(N|G)\big|_{(F,0)}\neq0  
\qquad \iff \qquad 
\langle  \textcolor{black}{\nabla} f,G\rangle\big|_{(F,0)}\neq 0.
\end{equation}
\end{defn}

\begin{figure}[t]
\captionsetup{format=plain}
\centering
\begin{subfigure}{.5\textwidth}
  \centering
  \includegraphics[trim={9.5cm 8cm 0 8cm},scale=0.75]{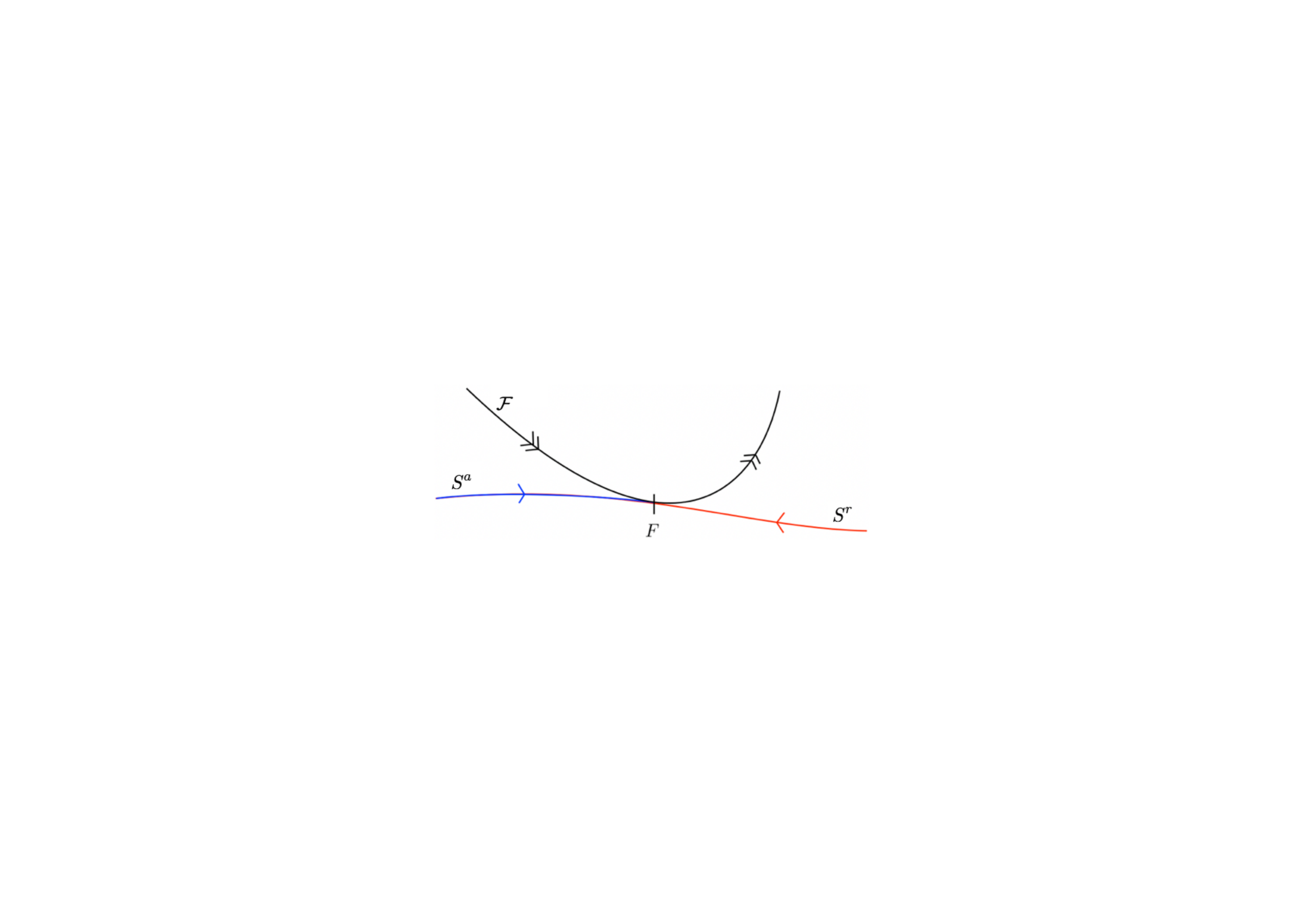}
  \caption{Jump-off}\label{JumpOff}
  \label{fig:sub1}
\end{subfigure}%
\begin{subfigure}{.5\textwidth}
  \centering
  \includegraphics[trim={9.5cm 8cm 0 8cm},scale=0.75]{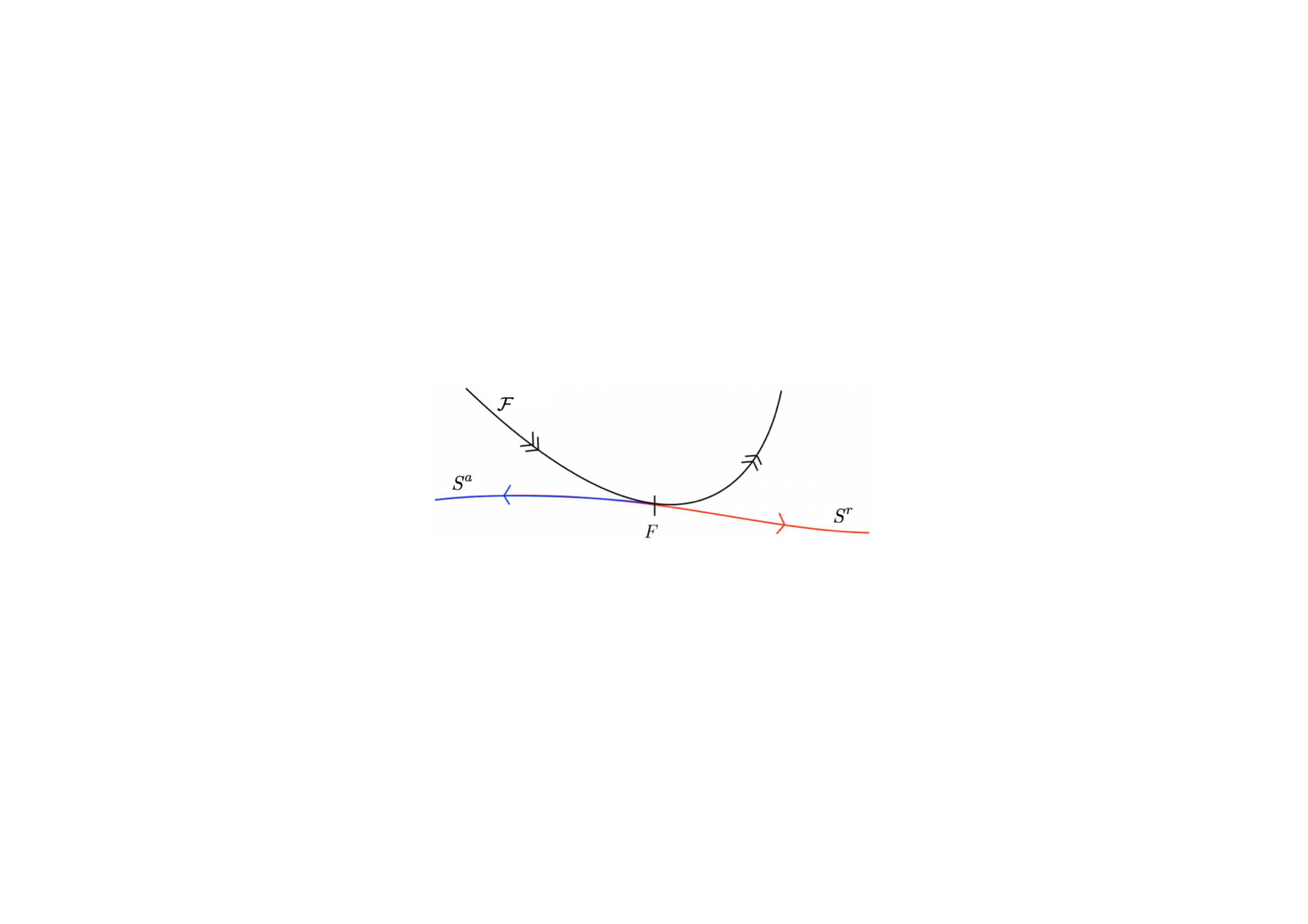}
  \caption{Jump-on}\label{JumpOn}
  \label{fig:sub5}
\end{subfigure}
\caption{The reduced flow near a regular contact point $F$ of order one.}\label{JumpFig}
\label{fig:test}
\end{figure}

A regular contact point implies that solutions of the desingularised problem \eqref{DesingRP} reach a contact point $F$ in finite (forward or backward) time which implies a finite (forward or backward) time blow-up of solutions near $F$ in the reduced problem \eqref{RP1}, i.e. solutions of the reduced problem cease to exist. Since a regular contact point is also an order one contact point, the stability property of the critical manifold changes near $F$; see Remark~\ref{rem:lambda}. Hence, the reduced flow is either towards or away from a regular contact point $F$ as shown in Figure~\ref{JumpFig}.

\begin{defn} (Jump-Off/On Point).
A regular contact point $F$ is called a jump-off point if the reduced flow is towards $F$ (see, e.g., Figure \ref{JumpOff}) or a jump-on point if the reduced flow is away from $F$ (see, e.g., Figure \ref{JumpOn}). 
\end{defn}
\W{
\begin{rem}
Jump-off points play an important role in relaxation oscillations insofar as they mark a transition from slow to fast motion, which is a defining feature for relaxation oscillations.
\end{rem}
}
\begin{rem}\label{RegFoldRem}
A point $F\in S$ in standard form problems \eqref{OurSF} is called a regular fold point if it satisfies $g|_{(F,0)}\neq0$ in addition to the fold conditions $f_0|_F=0$, $D_xf_0|_F=0$ and the nondegeneracy conditions in \eqref{FoldNondegeneracy} (see, e.g. \cite{Kuehn2015}). Hence at a regular fold point $F$ we have
\[
 g|_{(F,0)}\neq0 \qquad \implies \qquad\langle  \textcolor{black}{\nabla} f,G\rangle\big|_{(F,0)}=(D_yf_0)g\big|_{(F,0)}\neq0,
\]
i.e. regular fold points are regular contact points with $\sigma_F=1$.

Conversely, let $F$ be a regular contact point with $\sigma_F=1$ in \eqref{OurSF}. Then by analogous reasoning as Remark \ref{FoldRem} we have $D_xf_0|_F=0$, $D_yf_0|_F\neq0$ and $D_x^2f_0|_F\neq0$. Moreover, $\det(N|G)|_{(F,0)}=g|_{(F,0)}\neq0$. Hence regular contact points with $\sigma_F=1$ in system \eqref{OurSF} are regular fold points.
\end{rem}

\begin{app} (Two-stroke relaxation oscillator model \eqref{SS} continued).
For this model, we obtain a projection operator \eqref{Proj} of the form
\begin{equation}\label{ProjSS}
\Pi_N^{S}=
\begin{pmatrix}
{1} & {0} \\
{0} & {1}
\end{pmatrix}
-\frac{1}{x-\mu(0)}
\begin{pmatrix}
{0} & {v_0} \\
{0} & {x-\mu(0)}
\end{pmatrix}
=
\begin{pmatrix}
{1} & {-v_0/(x-\mu(0))} \\
{0} & {0}
\end{pmatrix}\,,
\end{equation}
and a reduced problem \eqref{RP1}, 
\begin{equation}
\begin{pmatrix}
\dot{x} \\
\dot{y}
\end{pmatrix}
= \Pi_N^{S} G(z;0)|_{S}
=
\begin{pmatrix}
{-v_0/(x-\mu(0))} \\
{0}
\end{pmatrix}
\,.
\end{equation}
Since $S$ loses normal hyperbolicity at the regular contact point $F$ for $x=\mu (0)$, we study the corresponding desingularised problem \eqref{DesingRP},
\begin{equation}
\begin{pmatrix}
\dot{x} \\
\dot{y}
\end{pmatrix}
= v_0
\begin{pmatrix}
1 \\
0
\end{pmatrix}
\,,
\end{equation}
with $v_0>0$. Thus $F$ is a regular jump-off point by Definition \ref{CPDef}, i.e.~the reduced flow is towards $F$ and a finite (forward) time blow-up of solutions occurs.
The combined layer and reduced problem dynamics near the jump-off point $F$ are shown in Figure \ref{ExpAppSingFig1} for the case of system \eqref{ExpSys}. 
\end{app}

\subsection{Local GSPT results in non-standard form}\label{LocalSec1}

\textcolor{black}{While any general slow-fast problem \eqref{Gen1} can be locally transformed to a standard form problem \eqref{OurSF} as shown in Appendix \ref{app:rectifying}, it is only a theoretical result in nature, i.e. in most cases this transformation cannot be calculated explicitly. In fact, it is not desirable to make any coordinate transformations in applications when one can apply GSPT directly to a non-standard problem \eqref{Gen1}. Thus in the following, we present local results which are well-known in standard GSPT literature in their non-standard form.}

\begin{thm}\label{FenThm} (Fenichel Theorems \cite{Fenichel1979,Fenichel1974,Fenichel1977}. See also \cite{Kuehn2015,Tikhonov1952}). Consider \eqref{Gen1} and suppose \textcolor{black}{$S_n \subseteq S$ is normally hyperbolic and compact}. Then $\exists \epsilon_0>0$ such that $\forall \epsilon\in (0,\epsilon_0)$ the following assertions are true:
\begin{itemize}
\item[(F1)] There exists a $C^r$-smooth locally invariant manifold $S_{n,\epsilon}$, called the slow manifold, which \textcolor{black}{is $\mathcal O(\epsilon)$ from $S_n$ in the Hausdorff distance}.
\item[(F2)] The flow on $S_{n,\epsilon}$ converges to the reduced flow on $S_n$ as $\epsilon\to 0$.
\item[(F3)] The manifold $S_{n,\epsilon}$ is normally hyperbolic, and exhibits the same stability properties with respect to the fast dynamics as $S_n$ (i.e. $S_{n,\epsilon}$ is attracting or repelling if $S_n$ is attracting or repelling respectively).
\item[(F4)] The manifold $S_{n,\epsilon}$ is usually not unique, but all manifolds satisfying (F1)-(F3) lie within a Hausdorff distance which is $\mathcal{O}(e^{-K/\epsilon})$ for some constant $K>0$.
\item[(F5)] Statements (F1)-(F4) also hold locally for the stable and unstable manifolds \textcolor{black}{given by the foliations}
\[
W^s_{loc}(S_n)=\bigcup_{z\in S_n}W^s_{loc}(z), \qquad  W^u_{loc}(S_n)=\bigcup_{z\in S_n}W^u_{loc}(z),
\]
\textcolor{black}{where $W^s_{loc}(z)$ and $W^u_{loc}(z)$ denote the local stable and unstable one-dimensional manifolds associated with $z\in S_n$. The manifolds $W^s_{loc}(S_n)$ and $W^s_{loc}(S_n)$} persist as manifolds $W^s_{loc}(S_{n,\epsilon})$ and $W^u_{loc}(S_{n,\epsilon})$ with foliations consisting of $C^r$-smooth leaves $W_{loc}^s(z)$ and $W_{loc}^s(z)$ with base points $z\in S_{n,\epsilon}$. In particular, we have the following:
\begin{itemize}
\item[(i)] 
\begin{equation}\label{U/SFoliations}
W^s_{loc}(S_{n,\epsilon})=\bigcup_{z\in S_{n,\epsilon}}W^s_{loc}(z), \qquad  W^u_{loc}(S_{n,\epsilon})=\bigcup_{z\in S_{n,\epsilon}}W^u_{loc}(z).
\end{equation}
\item[(ii)] The foliations in \eqref{U/SFoliations} are positively and negatively invariant respectively, i.e. $W_{loc}^s(z)\cdot t\subset W_{loc}^s(z\cdot t)$ $\forall t\geq0$ such that $z\cdot t\in S_{n,\epsilon}$, where $\cdot t$ denotes the forward evolution of $z$ in time, and similarly $W^u_{loc}(z)\cdot t\subset W^u_{loc}(z\cdot t)$ $\forall t\leq0$ such that $z\cdot t\in S_{n,\epsilon}$.
\item[(iii)] If $S_n$ is attracting and $\lambda(z)<\alpha_s<0$ $\forall z\in S_n$, there exists a constant $\kappa_s>0$ such that if $z\in S_{n,\epsilon}$ and $v\in W^s_{loc}(z)$, then
\[
\Vert v\cdot t-z\cdot t\Vert\leq \kappa_se^{\alpha_s t}
\]
$\forall t\geq0$ such that $z\cdot t\in S_{n,\epsilon}$.
Similarly, if $S_n$ is repelling and $\lambda(z)>\alpha_u>0$ $\forall z\in S_n$, there exists a constant $\kappa_u>0$ such that if $z\in S_{n,\epsilon}$ and $v\in W^u_{loc}(z)$, then
\[
\Vert v\cdot t-z\cdot t\Vert\leq \kappa_ue^{\alpha_u t}
\]
$\forall t\leq0$ such that $z\cdot t\in S_{n,\epsilon}$.
\end{itemize}
\end{itemize}
\end{thm}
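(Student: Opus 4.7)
The overall plan is to reduce the non-standard problem \eqref{Gen1} to Fenichel's classical result for standard form problems via the local rectifying transformation, and then glue the local results together using compactness of $S_n$. Since $S_n$ is normally hyperbolic, at every $z \in S_n$ Assumption~\ref{BasicAss} ensures $\nabla f|_z \neq 0$, so at least one of $D_x f|_z$ or $D_y f|_z$ is nonzero and Lemma~\ref{FlatSLem} (or its analogue with the roles of $x$ and $u$ reversed) applies. In the resulting coordinates, the vector field takes the form $\tilde N(x,u) u + \epsilon \tilde G(x,u,\epsilon)$, which is genuinely of standard form \eqref{OurSF} in a neighbourhood of the chosen point, with $\tilde u = 0$ being the critical manifold and the sign of $\tilde N_2|_{\tilde u = 0} = \lambda(z)$ determining stability.

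Now I would cover $S_n$ by finitely many such charts $\{U_i\}_{i=1}^k$, which is possible by compactness. In each chart $U_i$, the classical Fenichel theorems for standard form problems (as cited in \cite{Kuehn2015,Jones1995}) yield a locally invariant slow manifold $S_{n,\epsilon}^{(i)}$, normally hyperbolic with the same stability as $S_n \cap U_i$, $\mathcal O(\epsilon)$-close to $S_n \cap U_i$ in Hausdorff distance, whose flow converges to the reduced flow on $S_n \cap U_i$ as $\epsilon \to 0$. The invariant manifold property and the construction of the reduced flow on $S_n \cap U_i$ via the projection operator \eqref{Proj} are coordinate-independent by Proposition~\ref{RPProp}, so statements (F1)--(F3) pull back to the original coordinates chart-by-chart.

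The main obstacle, and the bulk of the work, is the gluing step. Since slow manifolds are only unique up to $\mathcal O(e^{-K/\epsilon})$ in Hausdorff distance by (F4), the locally constructed pieces $S_{n,\epsilon}^{(i)}$ need not coincide on overlaps $U_i \cap U_j$. To produce a single global slow manifold I would proceed as follows: pick any piece $S_{n,\epsilon}^{(i_0)}$, then extend it along the flow of \eqref{Gen1}. Forward- and backward-invariance under the flow, together with the exponential attraction/repulsion in the normal directions from (F3), force the extended manifold to lie within $\mathcal O(e^{-K/\epsilon})$ of every $S_{n,\epsilon}^{(i)}$ on overlaps; in particular the extended manifold is $\mathcal O(\epsilon)$-close to $S_n$ globally and locally invariant. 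This is essentially Fenichel's own global construction, and (F4) then states precisely the non-uniqueness that remains after this procedure.

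For (F5), the local stable and unstable foliations $W^{s,u}_{loc}(z)$ exist in each chart by the classical result in standard form, and the leaves are characterised dynamically (points exponentially attracted/repelled to/from the base point $z \in S_{n,\epsilon}$ at rates controlled by $\alpha_s, \alpha_u$). This dynamical characterisation is coordinate-invariant, so the foliations patch together across charts consistently, and the invariance and exponential estimates in (iii) transfer directly from the standard form statement. The technically delicate point here, which I would flag as the main obstacle alongside the gluing, is ensuring uniform constants $\kappa_s, \kappa_u, K$ across the finite cover; this follows from compactness of $S_n$ and continuity of $\lambda$, allowing one to take a uniform lower bound $\alpha_s, \alpha_u$ on all of $S_n$ as assumed in the statement.
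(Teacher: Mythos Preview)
The paper does not actually prove this theorem: it is stated as a citation of Fenichel's original results, and the subsequent remark explicitly points out that Fenichel's work in \cite{Fenichel1979} ``does not depend on the system being expressible in the standard form \eqref{OurSF}, and can be applied directly for general systems \eqref{Gen1}.'' So the paper's ``proof'' is simply an appeal to the literature in its original generality.

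Your proposal takes a genuinely different route: you reduce to the standard form case via the local rectifying transformation of Lemma~\ref{FlatSLem}, invoke the classical statement chart-by-chart, and then glue over a finite cover using compactness and the exponential non-uniqueness in (F4). This is a perfectly sound strategy and has the pedagogical advantage of showing explicitly how the non-standard result follows from the more widely known standard form version. The cost is that you must manage the gluing and uniformity of constants yourself, which you correctly identify as the main technical burden. By contrast, the paper's approach sidesteps all of this by noting that Fenichel's invariant manifold theory was formulated coordinate-independently from the outset, so no reduction is needed. Your argument is more self-contained; the paper's is shorter but relies on the reader knowing (or trusting) that Fenichel's original proofs never used the slow-fast variable splitting.
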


\begin{rem}\label{Non-uniqueness} 
Although Theorem \ref{FenThm} actually implies existence of an entire family of slow manifolds, by \textcolor{black}{(F4)} all such slow manifolds are exponentially close in $\epsilon$, so fixing a choice of slow manifold is rarely problematic in calculations.
\end{rem}

\begin{rem}
Fenichel's original work in \cite{Fenichel1979} does not depend on the system being expressible in the standard form \eqref{OurSF}, and can be applied directly for general systems \eqref{Gen1}.
\end{rem}

In the case that normal hyperbolicity breaks down, Theorem~\ref{FenThm} no longer applies. Thus, we still require a description of the perturbed dynamics near a contact point $F\in S$. We consider here only the least degenerate case, i.e.~the dynamics near a regular contact point $F$.  Let $U_F\subset\mathbb{R}^2$ denote a neighbourhood of a regular jump-off point $F=(x_F,y_F)$, and let $\mathcal{F}$ denote the layer problem orbit segment in $U_F$ that has contact of order one with $S$ at $F$. We define $\Sigma_{in/out}\in U_F$,
\begin{equation}\label{RCPSecs}
\Sigma_{in}=\big{\{}(x_F-\rho,y)|y\in J \big{\}}\,, \qquad \Sigma_{out}=\big{\{}(x_F+\rho,y)|y\in J \big{\}}\,,
\end{equation}
and assume without loss of generality that these vertical sections are transverse to both $S$ and the layer flow for sufficiently small $\rho>0$ and a suitably defined real interval $J$; see Figure \ref{RCPFig}. 

\begin{figure}[t]
\captionsetup{format=plain}
\hspace{0em}\centerline{\includegraphics[trim={0 6.5cm 0 6.5cm},scale=.6]{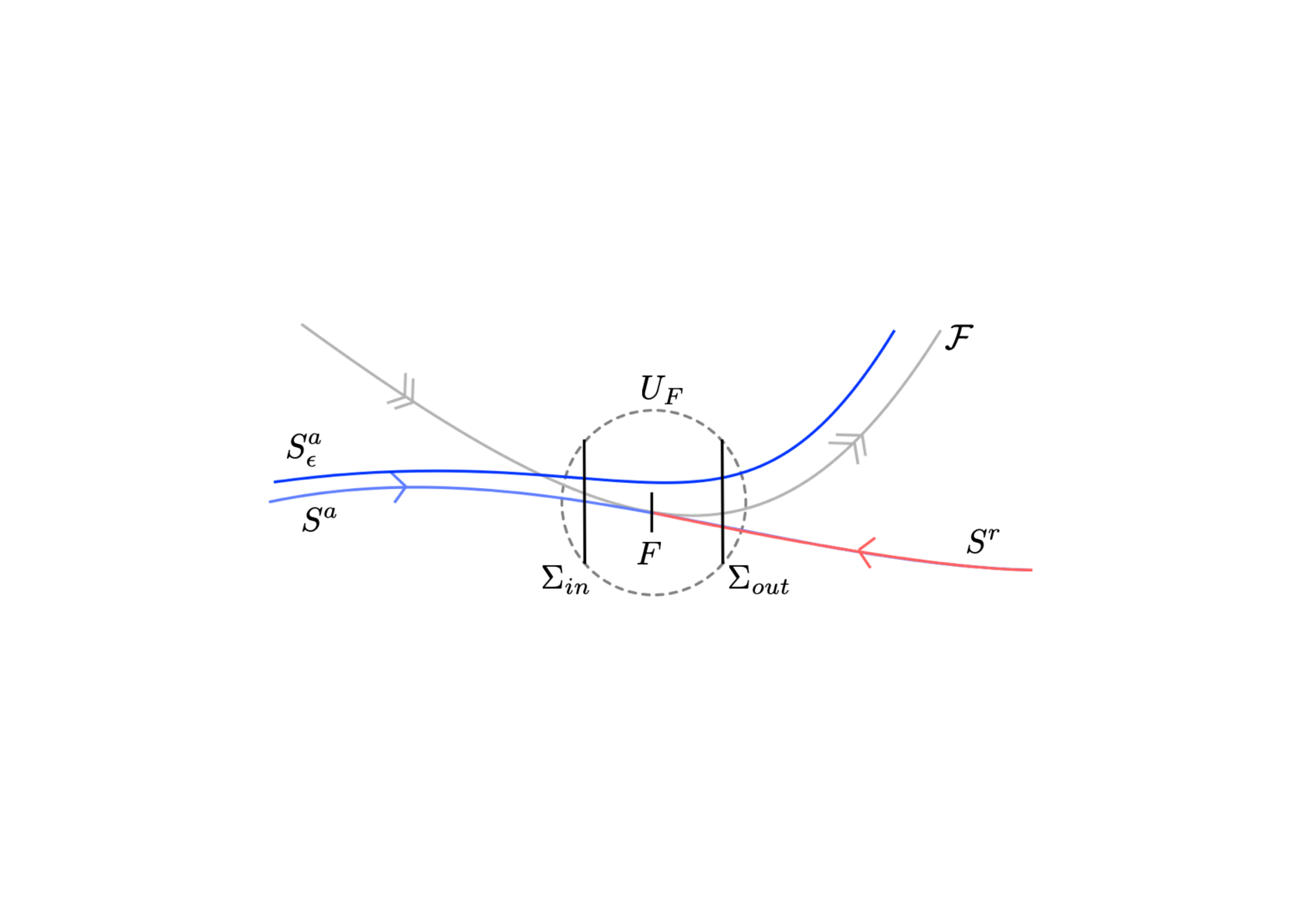}}
\caption{Behaviour near a regular contact point $F$, as described by Theorem \ref{RCPThm}.}\label{RCPFig}
\end{figure}

\begin{thm}\label{RCPThm} Let $F\in S$ be a jump-off point of system \eqref{Gen1}, and assume without loss of generality that $D_yf|_F\neq0$. Then $\exists\epsilon_0>0$ such that $\forall\epsilon\in(0,\epsilon_0]$ the following assertions hold:
\begin{itemize}
\item[(1)] The attracting slow manifold $S^a_{\epsilon}$ leaves the neighbourhood of the contact point via $\Sigma_{out}$, and
\[
\vert y_s-y_l\vert=\mathcal{O}(\epsilon^{2/3})\,,
\]
where $y_s$ respectively $y_l$ denote the $y$-coordinate of $S_\epsilon^a\cap\Sigma_{out}$ respectively $\mathcal{F}\cap\Sigma_{out}$.
\item[(2)] The transition map $\pi:\Sigma_{in}\to \Sigma_{out}$ is a contraction with contraction rate $\mathcal{O}(e^{-c/\epsilon})$, for some constant $c>0$.
\end{itemize}
\end{thm}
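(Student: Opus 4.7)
The plan is to reduce the non-standard system locally near $F$ to a standard-form generic fold using Lemma \ref{FlatSLem}, and then invoke the classical blow-up analysis for generic fold points \cite{Krupa2001b}.

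First, since $D_y f|_F \neq 0$, I would apply Lemma \ref{FlatSLem} to straighten $S$ to the $x$-axis in local coordinates $(x,u)$ with $u = f(x,y)$, and then translate $F$ to the origin. The system becomes
\begin{align*}
x' &= \tilde N_1(x,u)\,u + \epsilon \tilde G_1(x,u,\epsilon), \\
u' &= \tilde N_2(x,u)\,u + \epsilon \tilde G_2(x,u,\epsilon),
\end{align*}
and the hypotheses of the theorem translate cleanly: Assumption \ref{BasicAss} (since $F \notin V_0$) gives $\tilde N_1|_0 \neq 0$; the contact condition $\sigma_F = 1$ combined with Proposition \ref{CPOrder} gives $\tilde N_2|_0 = 0$ with $D_x \tilde N_2|_0 \neq 0$; and the regular-contact/jump-off condition \eqref{CPRegCond} gives $\tilde G_2|_0 \neq 0$, with signs arranged so that the $x$-axis is attracting for $x<0$ and the reduced flow on $S$ is directed towards the origin.

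Second, I would bring this local system into standard fold normal form by an $\epsilon$-independent near-identity change of variables together with an $x$-dependent time rescaling (legitimate because $\tilde N_1$ is bounded away from zero near the origin), producing a perturbation of
\begin{align*}
x' &= -u + x^2 + \mathcal O(xu,u^2,x^3), \\
u' &= \epsilon\bigl(1 + \mathcal O(x,u,\epsilon)\bigr),
\end{align*}
to which Theorem 2.1 of \cite{Krupa2001b} applies directly. The subtle point is the non-standard term $\tilde N_2(x,u)\,u$ in the $u'$-equation, which has no analogue in the standard form \eqref{OurSF}. However, $\tilde N_2|_0 = 0$ forces this term to be $\mathcal O(xu, u^2)$; under the fold scaling $x = \mathcal O(\epsilon^{1/3})$, $u = \mathcal O(\epsilon^{2/3})$ it is $\mathcal O(\epsilon)$, of the same size as the $\epsilon \tilde G_2$ term, and so does not influence the leading-order dynamics after the blow-up.

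Both conclusions then follow from the cylindrical blow-up $(x,u,\epsilon) = (r\bar x, r^2 \bar u, r^3 \bar\epsilon)$ of the origin. In the rescaling chart $\bar\epsilon = 1$ the blown-up vector field reduces at leading order to a Riccati equation whose distinguished Painlev\'e solution governs the extension of $S^a_\epsilon$ and yields the $\mathcal O(\epsilon^{2/3})$ separation from the tangent layer fibre $\mathcal F$ on $\Sigma_{out}$, giving Part (1). Part (2) follows from Fenichel's exponential attraction onto $S^a_\epsilon$ (Theorem \ref{FenThm}(F5)(iii)) up to an $\mathcal O(\epsilon^{2/3})$ neighbourhood of $F$: orbits entering through $\Sigma_{in}$ are contracted onto $S^a_\epsilon$ at rate $\mathcal O(e^{-c/\epsilon})$ before reaching this neighbourhood, and the subsequent passage through the blow-up region is a regular perturbation that preserves the exponential contraction up to constants. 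The main obstacle is the bookkeeping required in the second step: one must verify in each of the three blow-up charts that the non-standard term $\tilde N_2 u$ and the other state-dependent coefficients really do enter only at subleading order, so that the classical Krupa--Szmolyan estimates transfer essentially verbatim. Alternatively, one could cite the non-standard blow-up framework of \cite{Wechselberger2019}, which treats contact points directly in the general form \eqref{Gen1} without first reducing to standard form.
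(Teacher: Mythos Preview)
Your strategy coincides with the paper's: apply Lemma~\ref{FlatSLem} to straighten $S$, then resolve the contact point via the weighted blow-up $(x,u,\epsilon)=(r\bar x,r^2\bar u,r^3\bar\epsilon)$ and a chart-by-chart analysis in the style of Krupa--Szmolyan. The one point of divergence is your intermediate ``second step'': the displayed target form $x'=-u+x^2+\cdots$, $u'=\epsilon(1+\cdots)$ has critical manifold $\{u=x^2\}$, not $\{u=0\}$, so no $\epsilon$-independent near-identity change in $(x,u)$ can produce it from the output of Lemma~\ref{FlatSLem}; and as you yourself observe, the non-standard term $\tilde N_2 u$ survives and obstructs a direct citation of the Krupa--Szmolyan fold theorem (which is stated for systems with $u'=\epsilon g$). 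The paper therefore omits this reduction attempt and performs the blow-up directly on the system~\eqref{FlatS}, verifying in each chart $K_1,K_2,K_3$ that the extra terms enter only at subleading order --- which is precisely the ``bookkeeping'' you flag as the real obstacle. If one genuinely wants to reduce to standard form first, the correct route is the fiber-rectifying transformation of Lemma~\ref{GenToStnd} in Appendix~\ref{app:rectifying}, after which the regular-fold result applies verbatim; note also that the relevant reference is~\cite{Krupa2001a}, not~\cite{Krupa2001b}.
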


\begin{proof}
See Appendix \ref{CPProof}.
\end{proof}

\begin{rem}
The distance $\mathcal{O}(\epsilon^{2/3})$ in Theorem \ref{RCPThm} is consistent with the result in \cite{Krupa2001a}, where it is shown that the slow manifold $S_\epsilon^a$ leaves a neighbourhood of a regular fold point at a Hausdorff distance which is $\mathcal{O}(\epsilon^{2/3})$ from the critical fiber. The only difference between Theorem \ref{RCPThm} and the result in \cite{Krupa2001a} is that the distance $\mathcal{O}(\epsilon^{2/3})$ in Theorem \ref{RCPThm} is stated for general systems \eqref{Gen1} in terms of simple transversals in the original coordinates.
\end{rem}

\section{Existence of two-stroke relaxation oscillations}\label{ROSec}

In this section we present existence, uniqueness and stability results for two-stroke relaxation oscillations in non-standard singular perturbation problems \eqref{Gen1} which apply to our model systems \eqref{SS2}, \eqref{ExpSys}, and \eqref{SS}.

First, we note that the presence of a regular contact point and the associated finite time blow-up in the reduced problem allows one to concatenate segments of layer and reduced problems.

\begin{defn} (Reciprocal point, cf. \cite{Kaleda2011}). A point $L_z\in S$ is reciprocal to $z\in S$ if they belong to the endpoints of a heteroclinic orbit of the layer problem. We also say that the pair $(z,L_z)$ is reciprocal.
\end{defn}

\begin{rem}
For standard form problems \eqref{OurSF}, if $z=(x_z,y_z)\in S$ has a reciprocal point $L_z=(x_{L_z},y_{L_z})$, then $y_z=y_{L_z}$ since the fast fibers are parallel to the $x$-axis. For general systems \eqref{Gen1} however, the relationship between $z\in S$ and a corresponding reciprocal point $L_z$ is non-trivial. 
\end{rem}

\begin{defn}\label{SingRODef} (Singular relaxation cycle). A closed singular orbit consisting of at least one segment from the layer and the reduced problem is called a singular relaxation cycle; see Figure \ref{ROFig}.
\end{defn}

\begin{figure}[t]
\captionsetup{format=plain}
\hspace{0em}\centerline{\includegraphics[trim={0 2.5cm 0 2.5cm},scale=.35]{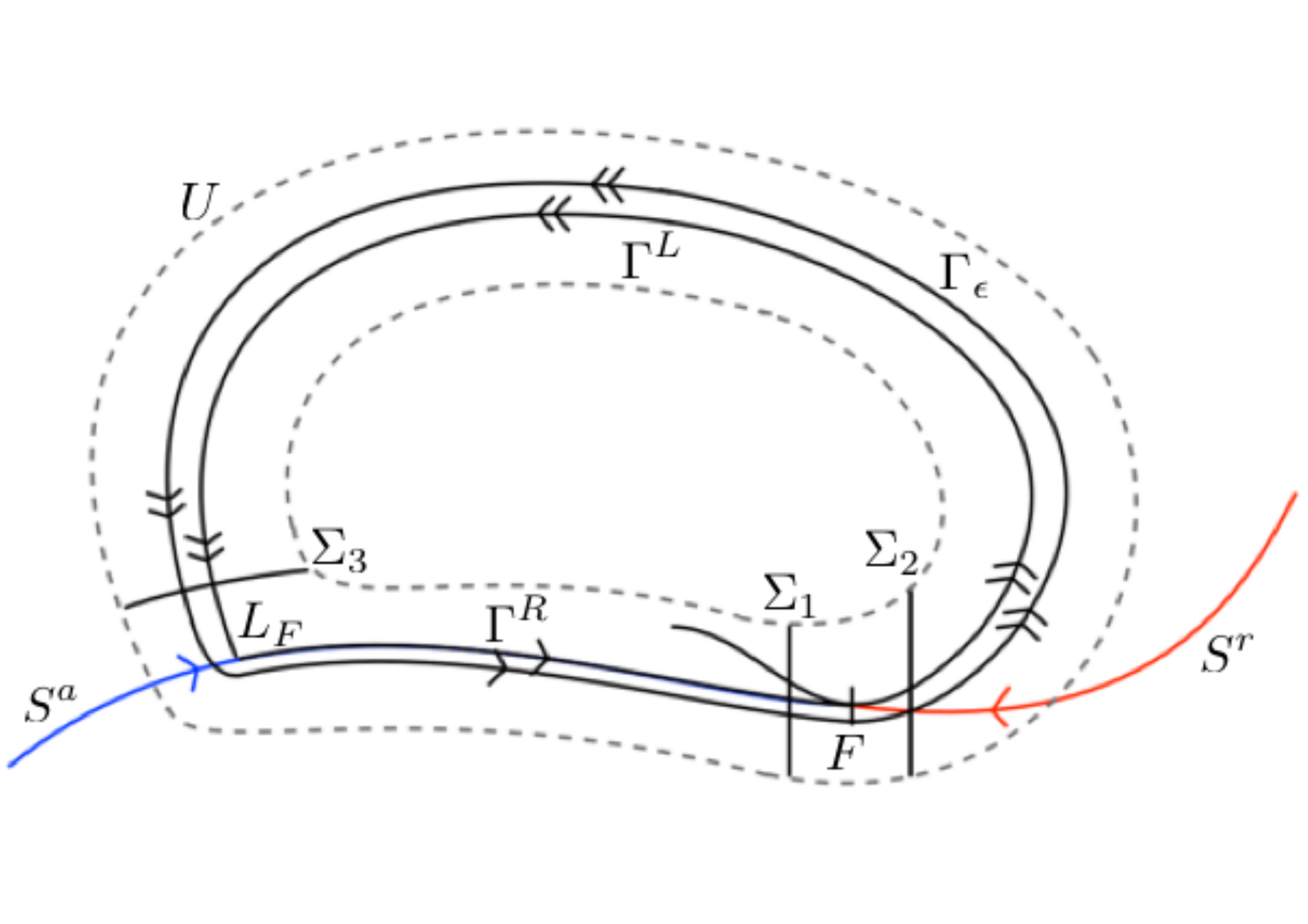}}
\caption{Two-stroke singular relaxation cycle $\Gamma=\Gamma^R\cup\Gamma^L$ for a system \eqref{Gen1} satisfying Assumption \ref{ROAss}. Also shown are the segments $\Sigma_i$ defined for the proof of Theorem \ref{ROThm}, and the relaxation oscillation $\Gamma_\epsilon$.}\label{ROFig}
\end{figure}

We are now able to provide minimal conditions on system \eqref{Gen1} for the existence of a singular two-stroke singular relaxation cycle and state the main result about the persistence of a two-stroke relaxation cycle under sufficiently small perturbations $\epsilon\ll 1$.

\begin{assumption}\label{ROAss} System \eqref{Gen1} has the following properties:
\begin{itemize}
\item[($A1$)] The critical manifold $S$ has precisely one jump-off point $F$, and therefore decomposes $S=S^a\cup\{F\}\cup S^r$, where $S^a$ ($S^r$) is attracting (repelling) and normally hyperbolic. Without loss of generality, $D_yf|_F\neq0$ is satisfied.
\item[($A2$)] The jump-off point $F$ has a reciprocal point $L_F\in S^a$, implying the existence of a singular relaxation cycle $\Gamma=\Gamma^L\cup\Gamma^R$, where the segment $\Gamma^R$ is a trajectory segment of the reduced problem from $L_F\in S^a$ to $F$ (see Figure \ref{ROFig}).
\end{itemize}
\end{assumption}

\begin{thm}\label{ROThm} \W{Given system \eqref{Gen1} under Assumptions~\ref{BasicAss} and~\ref{ROAss}}. Let $U$ denote a fixed tubular neighbourhood of the singular relaxation cycle $\Gamma$ such that $U\cap V_0=\O$. Then $\exists\epsilon_0>0$ such that $\forall\epsilon\in(0,\epsilon_0)$, system \eqref{Gen1} has a unique two-stroke relaxation cycle $\Gamma_\epsilon\subset U$. The relaxation cycle $\Gamma_\epsilon$ is attracting with Floquet exponent bounded above by $-K/\epsilon$ for some constant $K>0$, and converges to $\Gamma$ in Hausdorff distance as $\epsilon\to0$.
\end{thm}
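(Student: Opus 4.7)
The plan is to construct a Poincaré first-return map on a section transverse to the singular relaxation cycle $\Gamma$ and show it is a strong contraction via the tools assembled in Section~\ref{FrameworkSec}. First, place four transverse sections along $\Gamma$: a section $\Sigma_0$ transverse to the layer segment $\Gamma^L$ just before it enters a Fenichel neighbourhood of $L_F$; a section $\Sigma_1$ transverse to $S^a$ just past $L_F$ along the reduced flow; and the sections $\Sigma_2 := \Sigma_{in}$ and $\Sigma_3 := \Sigma_{out}$ at the jump-off point $F$ as in Theorem~\ref{RCPThm}. For $\epsilon$ sufficiently small, each $\Sigma_i$ is transverse to the $\epsilon > 0$ flow of \eqref{Gen1}; shrink $U$ if necessary so that all constructions lie in $U$.

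Next, decompose the first-return map as $\Pi = \Pi_{30} \circ \Pi_{23} \circ \Pi_{12} \circ \Pi_{01}$, where $\Pi_{ij} : \Sigma_i \to \Sigma_j$ denotes the transition along the flow, and control each factor as follows:
\begin{itemize}
\item $\Pi_{01}$ is the descent onto $S^a_\epsilon$ near $L_F$. By Theorem~\ref{FenThm}(F5)(iii), the stable foliation of $S^a_\epsilon$ contracts its fibres exponentially in the fast direction, and the map is smooth with a contraction already built in.
\item $\Pi_{12}$ is the slow passage along $S^a_\epsilon$ from near $L_F$ to $\Sigma_2$. By (F1)--(F2), it is an $\mathcal{O}(\epsilon)$-perturbation of the $F$-free reduced flow map (which is regular along $\Gamma^R$), hence smooth with $\mathcal{O}(1)$ derivative; moreover, during this transit the normal contraction of Theorem~\ref{FenThm}(F5)(iii) accumulates to a factor $\mathcal{O}(e^{-c_1/\epsilon})$.
\item $\Pi_{23}$ is the passage through the regular jump-off point, and by Theorem~\ref{RCPThm}(2) is a contraction with rate $\mathcal{O}(e^{-c_2/\epsilon})$.
\item $\Pi_{30}$ is the layer passage back to $\Sigma_0$. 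By Theorem~\ref{RCPThm}(1), the exit point on $\Sigma_3$ lies at distance $\mathcal{O}(\epsilon^{2/3})$ from $\mathcal{F}\cap\Sigma_3$; since $U \cap V_0 = \emptyset$, $\Gamma^L$ is a compact arc of the auxiliary flow $z' = N(z)$ (cf.~Lemma~\ref{LayerNEquivalence}), and a Gronwall/regular perturbation argument along this arc produces a smooth $\Pi_{30}$ with $\mathcal{O}(1)$ derivative.
\end{itemize}

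Consequently, $\Pi$ is a contraction on a compact interval $I \subset \Sigma_0$ with derivative bounded by $Ce^{-K'/\epsilon}$, and the invariance $\Pi(I) \subset I$ holds because the image of $\Sigma_0$ under $\Pi$ lies in a neighbourhood of $\Gamma^L \cap \Sigma_0$ of radius $o(1)$ as $\epsilon \to 0$. The Banach fixed point theorem then yields a unique fixed point $p^* \in I$, which corresponds to a unique periodic orbit $\Gamma_\epsilon \subset U$. Combining the multiplier estimate with the fact that the period of $\Gamma_\epsilon$ is $\mathcal{O}(1)$ in the slow time $\tau = \epsilon t$ (the slow segment dominates; the fold and layer transits are $o(1)$ in $\tau$) delivers the Floquet exponent estimate $\chi \leq -K/\epsilon$. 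Hausdorff convergence $\Gamma_\epsilon \to \Gamma$ as $\epsilon \to 0$ follows from the $\mathcal{O}(\epsilon)$-closeness of $S^a_\epsilon$ to $S^a$ in Theorem~\ref{FenThm}(F1), the $\mathcal{O}(\epsilon^{2/3})$-closeness of the exit fibre to $\mathcal{F}$ in Theorem~\ref{RCPThm}(1), and continuous dependence on initial data along the compact layer arc $\Gamma^L$.

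The main obstacle is the non-standard geometry: the fast fibres of \eqref{Gen1} are not parallel lines, so identifying $\Pi_{30}$ as an $\mathcal{O}(\epsilon^{2/3})$-perturbation of the layer flow along $\Gamma^L$ relies essentially on the foliation structure of Theorem~\ref{FenThm}(F5) together with the exit estimate Theorem~\ref{RCPThm}(1), rather than on an explicit parametrisation by the slow variable as in standard form problems. Once this identification is secured, the exponential contraction at the jump-off point dominates any bounded expansion elsewhere, and the remainder is a routine Banach fixed point / Floquet calculation.
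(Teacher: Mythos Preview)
Your proposal is correct and follows essentially the same approach as the paper's own proof: a Poincar\'e return map decomposed into a Fenichel passage along $S^a_\epsilon$, a passage through the regular contact point governed by Theorem~\ref{RCPThm}, and a regular layer transit handled by flow-box/Gronwall arguments, with the contraction mapping principle yielding the unique cycle. The only cosmetic difference is that you use four sections (separating the drop-down near $L_F$ from the slow drift) whereas the paper uses three and combines these into a single Fenichel map $\Pi_{31}:\Sigma_3\to\Sigma_1$; the ingredients and logic are otherwise identical.
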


\begin{proof}
Under Assumption~ \ref{ROAss} and assuming a sufficiently small tubular neighbourhood $U$ of the singular relaxation cycle $\Gamma$, we define cross sections $\Sigma_i\in U$, $i\in\{1,2,3\}$, as shown in Figure~ \ref{ROFig}. Let $\Pi :  \Sigma_3 \to \Sigma_3$ denote a global return map defined by the composition
$$
\Pi=\Pi_{23}\circ\Pi_{12}\circ\Pi_{31}
$$
where $\Pi_{31}:\Sigma_3\to\Sigma_1$, $\Pi_{12}:\Sigma_1\to\Sigma_2$ and $\Pi_{23}:\Sigma_2\to\Sigma_3$ are the corresponding transition maps. We show that $\Pi$ is a contraction, from which the existence of a unique stable limit cycle follows by the {\em contraction mapping principle}. 

First, consider the map $\Pi_{31}$ which describes the flow near the normally hyperbolic attracting branch $S^a$ of the critical manifold $S$. By Fenichel Theorem \ref{FenThm} (F1)-(F3), $S^a$ perturbs to an attracting slow manifold $S_\epsilon^a$. Moreover, Theorem \ref{FenThm} (F5) implies that initial conditions in $\Sigma_3$ are exponentially attracted to the slow manifold $S_\epsilon^a$ with rate $\mathcal{O}(e^{-c_1/\epsilon})$ for some $c_1>0$, and they follow their base points on $S^a_{\epsilon}$  until exiting through $\Sigma_1$. Hence an (open) interval $I_3\subset\Sigma_3$ of size $O(1)$ about $\Gamma\cap\Sigma_3$ is mapped to an interval $\Pi_{31}(I_3)\subset\Sigma_2$ of width $\mathcal{O}(e^{-c_1/\epsilon})$ about $S_\epsilon^a\cap\Sigma_1$, i.e. the map $\Pi_{31}$ is exponentially contracting.

Second, consider the map $\Pi_{12}$ which describes the flow \textcolor{black}{past} the contact point $F$ where normal hyperbolicity is lost. This transition map is covered by Theorem \ref{RCPThm} which states that an (open) interval $I_1\subset\Sigma_1$ of size $O(1)$ about $\Gamma\cap\Sigma_1$ is mapped to an interval $\Pi_{12}(I_1)\subset \Sigma_2$ of width $\mathcal{O}(e^{-c_2/\epsilon})$ about $S_\epsilon^a\cap\Sigma_2$ for some $c_2>0$,  i.e. the map $\Pi_{12}$ is also exponentially contracting.

Third, consider the map $\Pi_{23}$ which describes regular flow from $\Sigma_2$ to $\Sigma_3$ in $U$. The flow-box theorem (see e.g. \textcolor{black}{\cite{Arnold1973,Chicone1999,Hirsch2012}}) and regular perturbation theory imply that $\Pi_{23}$ is a diffeomorphism with at most algebraic growth, i.e. there exists an (open) interval $I_2\subset\Sigma_2$ of size $O(1)$ about $\Gamma\cap\Sigma_2$ such that $\Pi_{23}(I_2)\subset \Sigma_3$. 

Finally, we take the composition of the three transition maps. Since $\Pi_{31}$ and $\Pi_{12}$ are exponentially contracting while $\Pi_{23}$ has (at most) algebraic growth, the return map $\Pi(I)\subset I$ is an interval of width $\mathcal{O}(e^{-c/\epsilon})$ for some $c>0$, i.e. $\Pi$ is a contraction with rate $\mathcal{O}(e^{-c/\epsilon})$. This guarantees the existence of a unique fixed point corresponding to a unique stable limit cycle $\Gamma_{\epsilon}\in U$.

By Theorem \ref{FenThm} (F1)-(F2), $S^a_{\epsilon}$ is $\mathcal{O}(\epsilon)$ from $S^a$ and converges to $S^a$ in the Hausdorff distance as $\epsilon\to0$. By Theorem \ref{RCPThm} (i), $S_\epsilon^a\cap\Sigma_2$ is $\mathcal{O}(\epsilon^{2/3})$ from the intersection of $\Sigma_3\cap\Gamma$ and converging to $\Gamma\cap\Sigma_3$ as $\epsilon\to0$. Hence $\Gamma_{\epsilon}$ approaches $\Gamma$ in the Hausdorff distance as $\epsilon\to 0$.
\end{proof}

\begin{app}(Application of Theorem~\ref{ROThm} to two-stroke oscillator model \eqref{SS1}).
\begin{figure}[t]
\captionsetup{format=plain}
\centering
\begin{subfigure}{.32\textwidth}
  \centering
  \includegraphics[trim={3.5cm 9cm 0 9cm},scale=0.375]{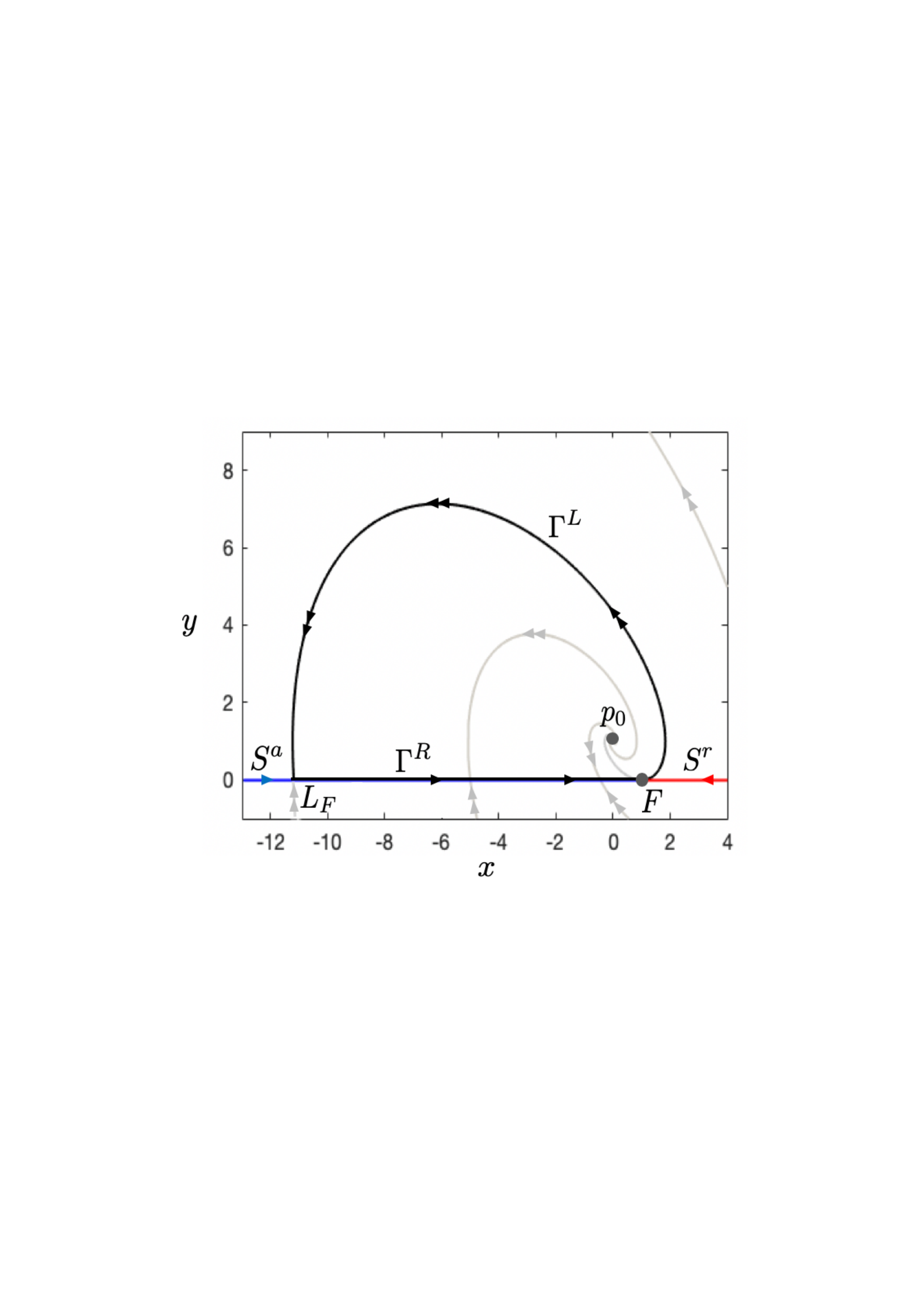}
  \caption{}\label{SingRatFig2}
  \label{fig:sub1}
\end{subfigure}%
\begin{subfigure}{.32\textwidth}
  \centering
  \includegraphics[trim={3.5cm 9cm 0 9cm},scale=0.375]{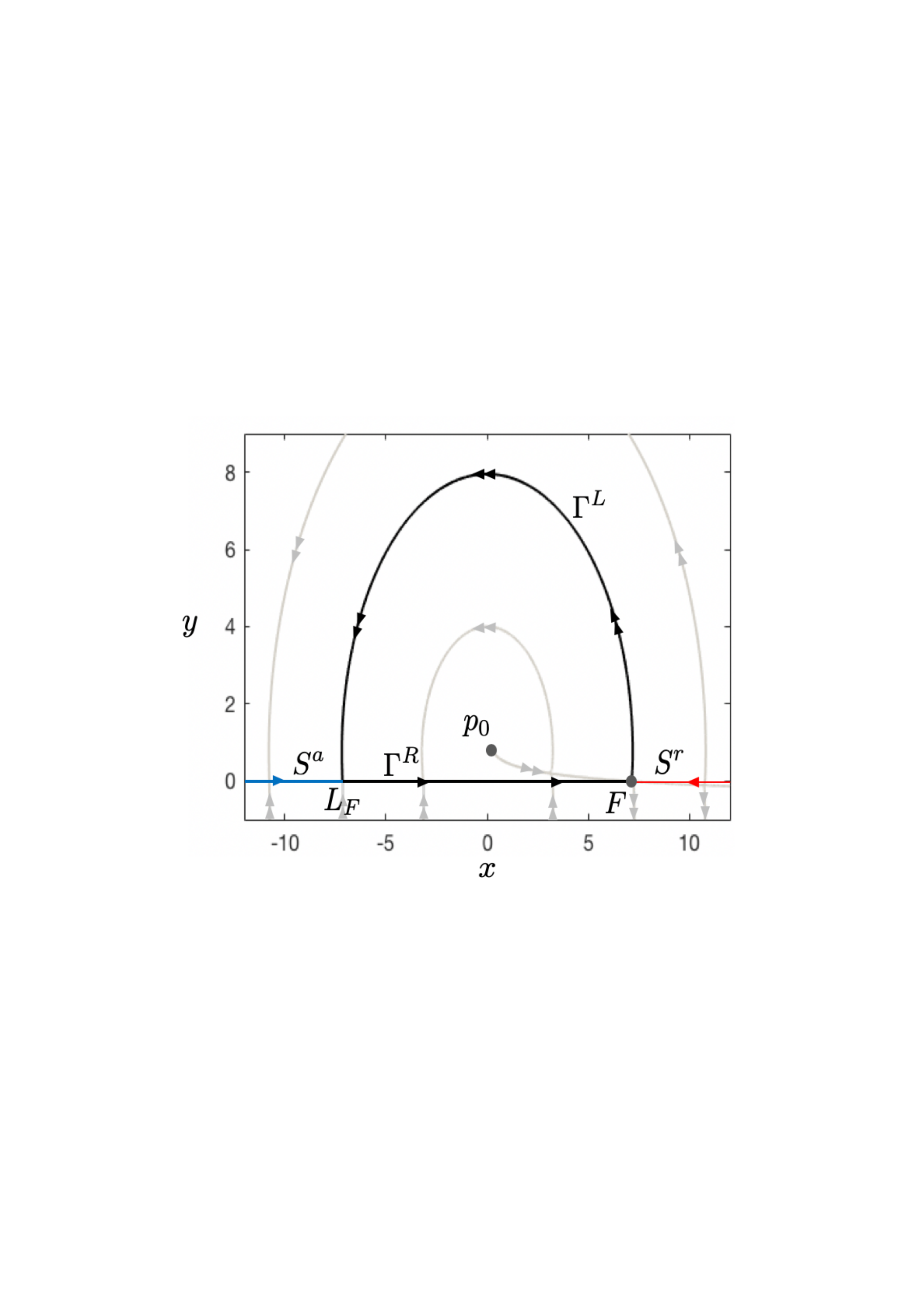}
  \caption{}\label{ExpAppSingFig}
  \label{fig:sub5}
\end{subfigure}
\begin{subfigure}{.32\textwidth}
  \centering
  \includegraphics[trim={3.5cm 9cm 0 9cm},scale=0.375]{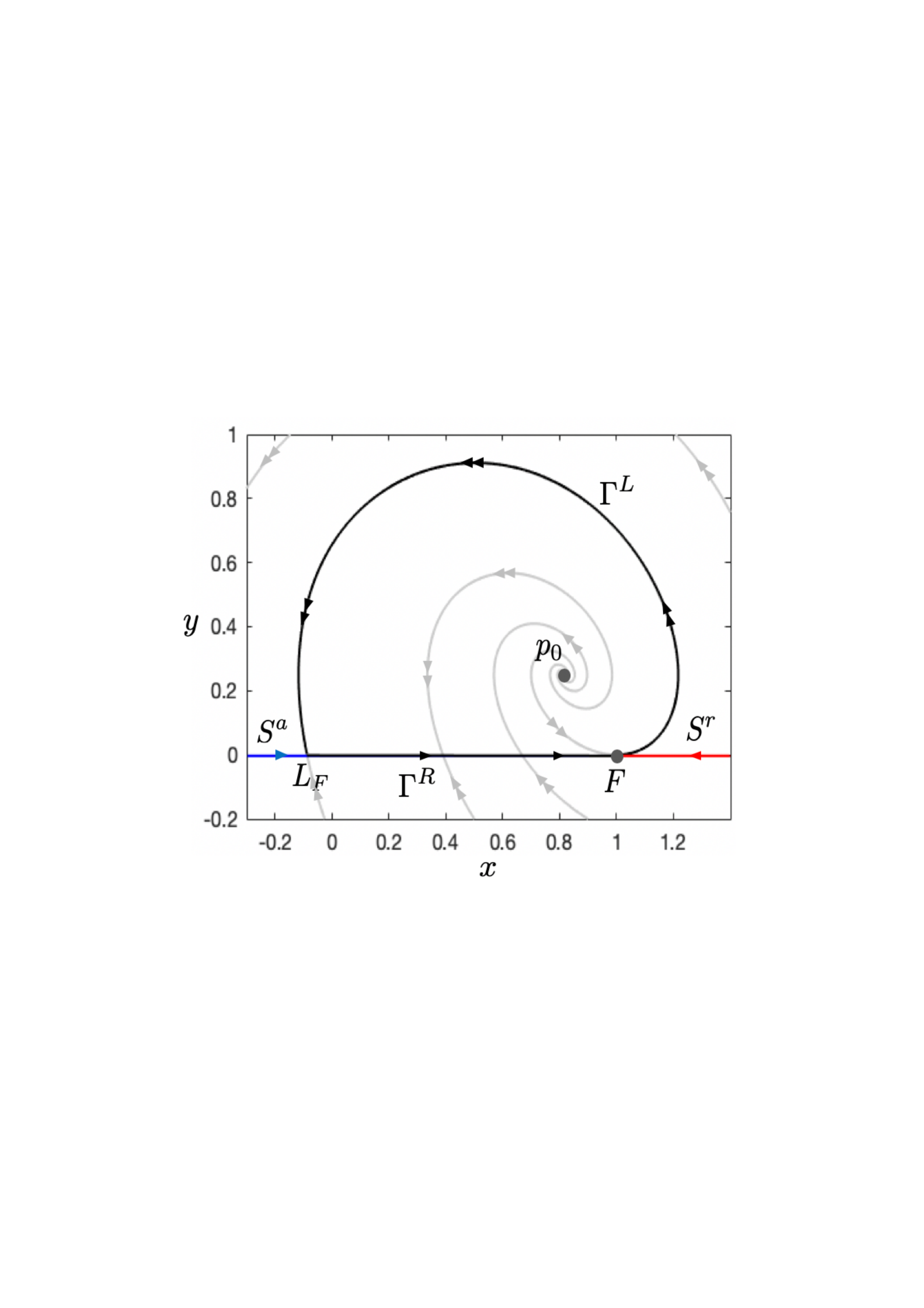}
  \caption{}\label{SSCubicSing}
  \label{fig:sub5}
\end{subfigure}
\caption{Singular limit dynamics and singular relaxation cycle $\Gamma=\Gamma^L\cup\Gamma^R$ for (a) system \eqref{SS2}; (b) system \eqref{SS} with characteristic \eqref{ExpSSChar}; (c) system \eqref{SS} with polynomial-type characteristic \eqref{PolySSChar}. Parameter values (except $\epsilon$) in (b) and (c) are the same as in Figure \ref{SSCharsCompFig}.}\label{SingRatFig}
\label{fig:test}
\end{figure}
%
System \eqref{SS1} is equivalent to system \eqref{SS2}.
The existence of a jump-off point in system \eqref{SS2} was shown in Section 3.
What is left to show is that system \eqref{SS2} has a singular relaxation cycle $\Gamma$, i.e. it remains to find a reciprocal point $L_F\in S^a$ of $F$. Note that the auxiliary layer problem of system \eqref{SS2}, $(x',y')^T=N(x,y)=(1-y,x-1+y)^T$, is linear with an unstable focus at $p_0=(0,1)$. Let $(x(t),y(t))$ denote the unique solution for this auxiliary system with $(x(0),y(0))=(1,0)=F$. Since there are no other singularities of $N(x,y)$, the expansion and rotation due to the unstable focus at $p_0$ guarantees the existence of a (unique) reciprocal point $L_F=(x(T),0)$ for some time $T>0$, where $x(T)<1$. Numerically we obtain an estimate $L_F\approx(-11.2,0)$ (Figure \ref{SingRatFig2}).
Hence we can construct a singular relaxation cycle $\Gamma=\Gamma^R\cup\Gamma^L$, where
$\Gamma^{R}=\big{\{}(x,0)\in S^a|x\in[x(T),1]\big{\}}$ and $\Gamma^{L}=\{(x(t),y(t))|t\in[0,T]\}$.
Hence Assumption \ref{ROAss} holds and by Theorem \ref{ROThm} there exists a strongly attracting two-stroke relaxation cycle $\Gamma_\epsilon$ converging to $\Gamma$ in the Hausdorff distance as $\epsilon\to0$. Thus we have proven existence and stability of the observed two-stroke oscillations in system \eqref{SS1}.
\end{app}

\begin{rem} 
\textcolor{black}{We omit the details involved with proving the existence of reciprocal points $L_F$ for system \eqref{SS} with either polynomial or exponential-type characteristics, which is clear from numerical simulations. See Figures \ref{ExpAppSingFig} and \ref{SSCubicSing}, where for our specific choice of parameters we find $L_F\approx (-6.86,0)$ and $L_F\approx (-0.09,0)$, respectively.}
\end{rem}

For completeness, we also include results on singular relaxation cycles for which $F$ is a jump-on point; see Figure~\ref{JumpOn}.

\begin{assumption}\label{ROAss2} System \eqref{Gen1} has the following properties:
\begin{itemize}
\item[($\bar{A1}$)] The critical manifold $S$ has precisely one jump-on point $F$, and therefore decomposes $S=S^a\cup\{F\}\cup S^r$, where $S^a$ ($S^r$) is attracting (repelling) and normally hyperbolic. Without loss of generality, $D_yf|_F\neq0$ is satisfied.
\item[($\bar{A2}$)] The jump-on point $F$ has a reciprocal point $L_F\in S^r$, implying the existence of a singular relaxation cycle $\Gamma=\Gamma^L\cup\Gamma^R$, where the segment $\Gamma^R$ is a trajectory segment of the reduced problem from $L_F\in S^r$ to $F$.
\end{itemize}
\end{assumption}

\begin{thm}\label{ROThm2} \W{Given system \eqref{Gen1} under Assumptions~\ref{BasicAss} and~\ref{ROAss2}}. Let $U$ denote a fixed tubular neighbourhood of the singular relaxation cycle $\Gamma$ such that $U\cap V_0=\O$. Then $\exists\epsilon_0>0$ such that $\forall\epsilon\in(0,\epsilon_0)$, system \eqref{Gen1} has a unique two-stroke relaxation cycle $\Gamma_\epsilon\subset U$. The relaxation cycle $\Gamma_\epsilon$ is repelling with Floquet exponent bounded below by $K/\epsilon$ for some constant $K>0$, and converges to $\Gamma$ in Hausdorff distance as $\epsilon\to0$.
\end{thm}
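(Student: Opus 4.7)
The plan is to deduce Theorem \ref{ROThm2} from Theorem \ref{ROThm} via a time-reversal symmetry. The observation is that Assumption \ref{ROAss2} is precisely the image of Assumption \ref{ROAss} under the transformation $t\mapsto -t$, so the existence, uniqueness and hyperbolicity statements for the jump-on case follow from the jump-off case by a direct translation.

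Concretely, I would proceed as follows. First, apply the time-reversal $t\mapsto -\tilde t$ to system \eqref{Gen1} to obtain
\[
\frac{dz}{d\tilde t}=\tilde N(z)f(z)+\epsilon\tilde G(z;\epsilon),\qquad \tilde N:=-N,\quad \tilde G:=-G.
\]
This is again of the form \eqref{Gen1}, and Assumption \ref{BasicAss} is manifestly preserved since the critical manifold $S=\{f=0\}$ and the singular set $V_0=\{N=0\}=\{\tilde N=0\}$ are unchanged. Next, I would verify that the time-reversed system satisfies Assumption \ref{ROAss}: the non-trivial eigenvalue along $S$ is $\tilde\lambda(z)=\langle\nabla f,\tilde N\rangle=-\lambda(z)$, so the roles of $S^a$ and $S^r$ are interchanged (the original $S^r$ becomes attracting, $S^a$ becomes repelling); the contact condition \eqref{CPCond} and the order one property at $F$ (via \eqref{CO1}) are invariant under $N\mapsto -N$; the regularity condition in Definition \ref{RCPDef} depends on $\det(N|G)=\det(\tilde N|\tilde G)$, hence is preserved; and the direction of the desingularised reduced flow \eqref{DesingRP} reverses under $G\mapsto -G$, turning the jump-on point $F$ of the original system into a jump-off point of the time-reversed system. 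Since the layer orbits coincide as sets but have reversed orientation, the reciprocal point $L_F\in S^r$ of the original system becomes a reciprocal point $L_F$ lying in the \emph{attracting} branch of the time-reversed critical manifold, giving condition $(A2)$.

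With Assumptions \ref{BasicAss} and \ref{ROAss} verified for the time-reversed system, Theorem \ref{ROThm} applies on the same tubular neighbourhood $U$ (which still satisfies $U\cap V_0=\emptyset$) and yields $\epsilon_0>0$ such that for $\epsilon\in(0,\epsilon_0)$ there is a unique, attracting two-stroke relaxation cycle $\tilde\Gamma_\epsilon\subset U$ of the time-reversed system with Floquet exponent bounded above by $-K/\epsilon$ for some $K>0$, converging to the singular cycle $\Gamma$ in Hausdorff distance as $\epsilon\to0$. Undoing the time-reversal, the orbit $\tilde\Gamma_\epsilon$ is the same set $\Gamma_\epsilon\subset U$ for the original system, its uniqueness in $U$ is preserved, and its Floquet exponent changes sign (the variational equation along a closed orbit transforms covariantly under time-reversal), so $\Gamma_\epsilon$ is repelling with Floquet exponent bounded below by $K/\epsilon$, still converging to $\Gamma$ in Hausdorff distance.

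The only non-routine part of this argument is the bookkeeping in step two, where I must check that each of the GSPT notions used in Assumption \ref{ROAss} (normal hyperbolicity signs, order of contact, regularity of the contact point, jump-on/jump-off classification, reciprocal pairing) behaves correctly under $(N,G)\mapsto(-N,-G)$. All of these reduce to inspection of the defining formulae \eqref{EV}, \eqref{CO1}, \eqref{CPRegCond} and \eqref{DesingRP} and present no real obstacle; once verified, the conclusion follows without any new dynamical analysis. Consequently I would not replicate the contraction mapping argument from the proof of Theorem \ref{ROThm}: that work is imported wholesale through the symmetry.
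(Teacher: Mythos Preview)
Your proposal is correct and follows essentially the same approach as the paper, which simply states that reversing time and applying the arguments of Theorem \ref{ROThm} yields the result. You have spelled out explicitly the bookkeeping (how $(N,G)\mapsto(-N,-G)$ swaps attracting/repelling branches, preserves the contact and regularity conditions, and converts the jump-on point into a jump-off point) that the paper leaves implicit, but the underlying idea is identical.
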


\begin{proof}
Reversing time and applying the same arguments given in the proof of Theorem \ref{ROThm} yields the desired result.
\end{proof}

\section{Comparison of two and four-stroke relaxation oscillators}\label{DiscussionSec}

We reiterate that the models presented in this work show that in general, two-stroke oscillations can occur given an $S$-shaped characteristic with two turning points (e.g.~system \eqref{SS} with polynomial-type characteristic), a characteristic with one turning point and a vertical asymptote (e.g.~system \eqref{LRDyn}), or, minimally, a $C$-shaped characteristic with a single turning point and no vertical asymptote (e.g.~system \eqref{SS2}). Thus, in general, only one turning point is necessary for two-stroke oscillation, in contrast to the four-stroke oscillation typified by the vdP oscillator, which requires two. These observations are summarised in Table \ref{24CompTab}.

\begin{table}[t]
\centering
\begin{tabular}{||c | c | c |} 
 \hline
 & Two-stroke & Four-stroke \\
 \hline
 Characteristic   & (S), (SC), or (C)    & (S) only  \\
 \# contact points &  $1$  & $2$   \\
 \# equilibria on $S$ & $0$ & $1$  \\
 \# equilibria of $N$ & $1$ & $0$  \\
 \hline
\end{tabular}
\caption{Comparison of minimal features for two and four-stroke relaxation oscillation. Here the notation ($C$), ($SC$), ($S$) refers to the shape of the characteristic: ($C$) means one turning point and no vertical asymptote (e.g. system \eqref{SS2}); ($SC$) means one turning point and a vertical asymptote (e.g. system \eqref{ExpSys}); ($S$) means two turning points (e.g. \eqref{VdPDyn} or \eqref{SS} with polynomial-type characteristic).}
\label{24CompTab}
\end{table}

We emphasise that the singularly perturbed two-stroke oscillator models presented here cannot be globally put into the standard form \eqref{SF}, i.e. they are genuine singular perturbation problems of the more general form \eqref{Gen1}. 

We would like to point out, though, that two-stroke relaxation oscillations have been studied in singularly perturbed problems in the standard form \eqref{SF} in the context of a model for aircraft-ground dynamics \cite{Rankin2011} (see also \cite{Kristiansen2017}), as well as in a model of discontinuous plastic deformation of metals \cite{Brons2005}. The characteristics in these models have one turning point and a vertical asymptote, similar to the characteristic of system \eqref{EM2} shown in Figure \ref{ExpTTraceFig1}. What distinguishes these two-stroke relaxation oscillations from those presented in this work is that the amplitude of the relaxation oscillation approaches infinity in the singular limit $\epsilon\to0$. The unbounded growth in cycle amplitude in these systems is a consequence of the global separation of slow and fast variables which implies a layer flow along straight fast fibers and, hence, no return mechanism is possible on a compact domain. Instead, the return mechanism in the case of the oscillators presented in \cite{Kristiansen2017,Rankin2011} occurs due to a `loss of normal hyperbolicity at infinity', i.e.~an alignment of the fast fibers with the vertical asymptote of the characteristic. A similar feature occurs in an autocatolator model, which exhibits three-timescale relaxation oscillations, and has been studied \textcolor{black}{using non-standard form techniques} in detail in \cite{Gucwa2009}. 

It is worth noting that system \eqref{EM2} could be analysed as a standard singular perturbation problem by artificially introducing a perturbation parameter $\epsilon\ll1$ so that \eqref{EM2} becomes
\begin{align}\label{EM3}
\begin{array}{lcl}
\dot{x}=\epsilon(-y- {y}_\ast), \\
\dot{y}=x-R_{em}(y),
\end{array}
\end{align}
where $R_{em}(y)$ is given by \eqref{EMChar2}. System \eqref{EM3} also `loses normal hyperbolicity at infinity' due to an alignment of the critical curve with the fast fibers (see \cite{Kristiansen2017} for details on how to deal with this). This approach has the advantage that the characteristic does not need to be approximated, but the disadvantage of extreme sensitivity of the cycle amplitude as a function of $\epsilon$ (as mentioned above, the amplitude tends to infinity as $\epsilon\to0$). Furthermore, one must be able to understand perturbations all the way up to $\epsilon=1$ (homotopy argument) in order to completely capture the original two-stroke oscillation observed in \eqref{EM2}.

Table \ref{24CompTab1} categorises the different approaches, showing which characteristic types allow for a standard form analysis, whether it can be undertaken on a compact domain, and those cases for which one must move beyond the standard form. Note that two-stroke relaxation oscillation in systems with a $C$-shaped characteristic cannot occur in standard form problems. 

\begin{table}[t]
\centering
\begin{tabular}{||c | c | c | c | c |} 
 \hline
 & Two-stroke (C) & Two-stroke (SC) & Two-stroke (S) & Four-stroke (S) \\
 \hline
 Stnd compact   & N  & N & N & Y  \\
 Stnd noncompact & N & Y & N & Y  \\
 Non-stnd compact & Y & Y & Y & Y  \\
 \hline
\end{tabular}
\caption{Different relaxation oscillation types, and whether or not they can be analysed as standard form problems on a compact or non-compact domain. As in Table \ref{24CompTab}, the notation ($C$), ($SC$), ($S$) refers to the shape of the characteristic}
\label{24CompTab1}
\end{table}

\subsection{Transition from two to four-stroke}

Consider a stick-slip oscillator model with a pole and polynomial-type approximation for the characteristic:
\[
\ddot{X}+F_{f,\epsilon}(v_r)+X=0, \qquad F_{f,\epsilon}(v_r)=\tilde N\bigg(-\mu_s+a_1\vert v_r\vert-a_3\vert v_r\vert^3+\frac{\epsilon}{\vert v_r\vert}\bigg),
\]
where $\tilde N$ is the (dimensionless) normal force associated with the mass. As before, $F_{f,\epsilon}(v_r)$ coincides with a common choice of dynamic friction law for $v_r\neq0$ (see, e.g. \cite{Thomsen2003,Won2016,Chen2014}), and the limit $\epsilon,\vert v_r\vert\to0$ approximates the stick phase. Restricting to $v_r<0$, setting $X=\tilde Nx$, $y=-v_r$, and applying the time desingularisation
\[
d\tau=\frac{y}{\tilde N}dt,
\]
we obtain the dynamical system
\begin{align}\label{24Trans}
\begin{array}{lcl}
x'=\delta(v_0-y)y, \\
y'=(x-\mu_s+a_1y-a_3y^3)y+\epsilon ,
\end{array}
\end{align}
where $\delta:=\tilde N^{-2}$. System \eqref{24Trans} exhibits two and four-stroke relaxation oscillation in different (limiting) regions of $(\epsilon,\delta)$-parameter space; see Figures \ref{2TransFig} and \ref{4TransFig}:
\begin{figure}[t]
\captionsetup{format=plain}
\centering
\begin{subfigure}{.5\textwidth}
  \centering
  \includegraphics[trim={2.5cm 8.5cm 0 8.5cm},scale=0.5]{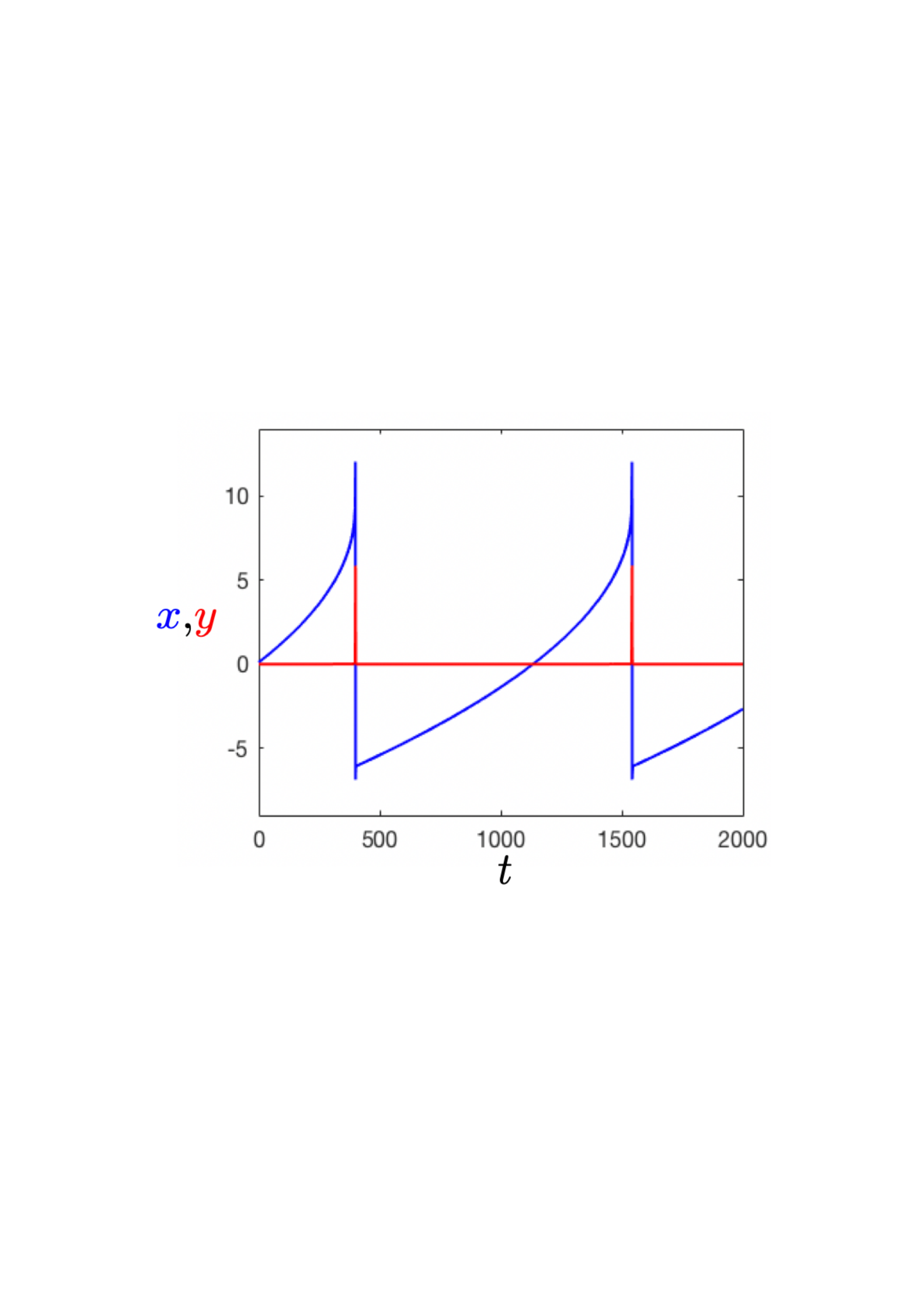}
  \caption{}\label{2STraceFig}
  \label{fig:sub1}
\end{subfigure}%
\begin{subfigure}{.5\textwidth}
  \centering
  \includegraphics[trim={2.5cm 8.5cm 0 8.5cm},scale=0.5]{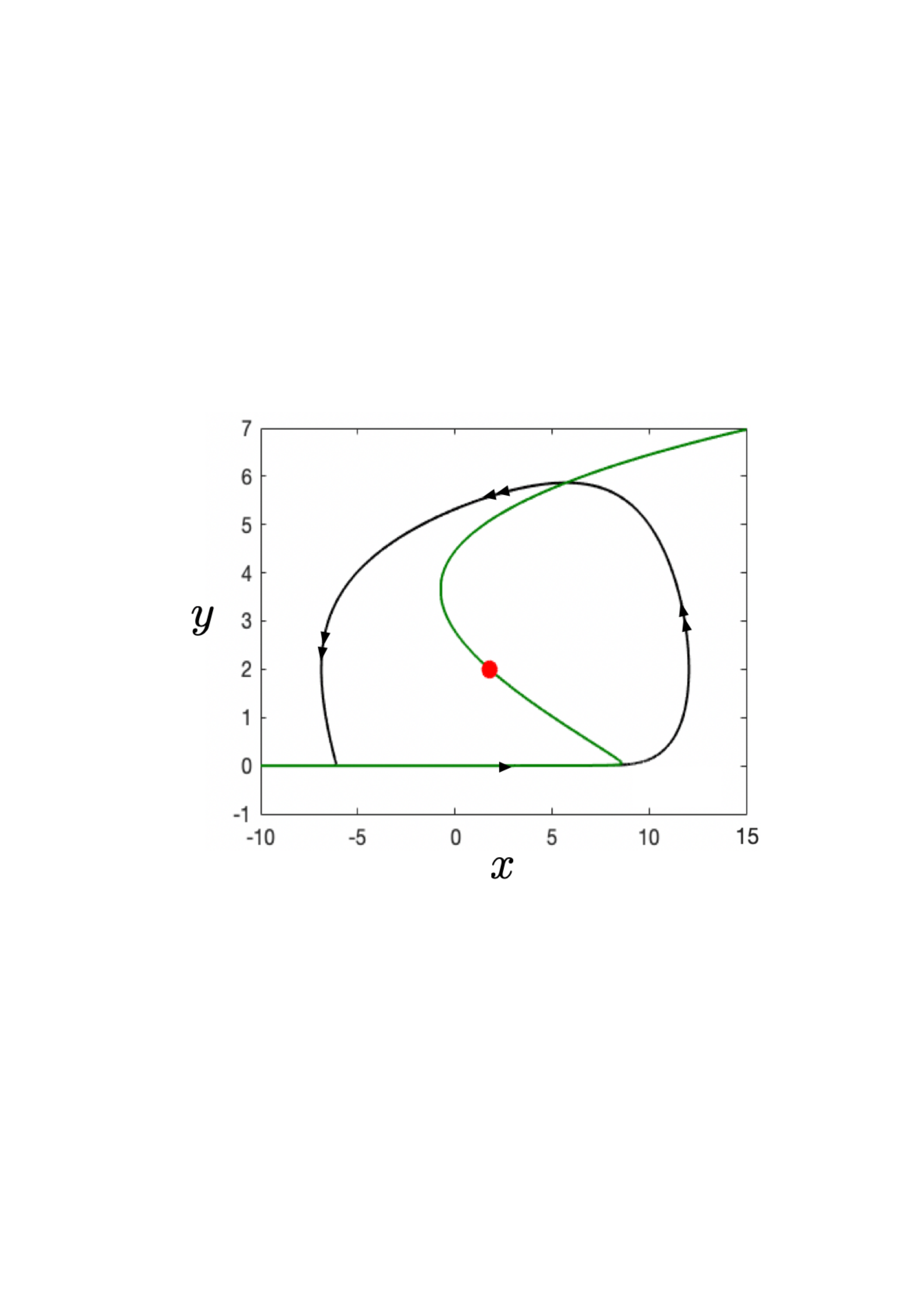}
  \caption{}\label{4STraceFig}
  \label{fig:sub5}
\end{subfigure}
\caption{Two-stroke relaxation oscillation in \eqref{24Trans} for $(\epsilon,\delta)=(10^{-2},5)$, and $(v_0,\mu_s,a_1,a_3)=(2,9,4,10^{-1})$.}\label{2TransFig}
\label{fig:test}
\end{figure}
%
\begin{figure}[t!]
\captionsetup{format=plain}
\centering
\begin{subfigure}{.5\textwidth}
  \centering
  \includegraphics[trim={2.5cm 8.5cm 0 8.5cm},scale=0.5]{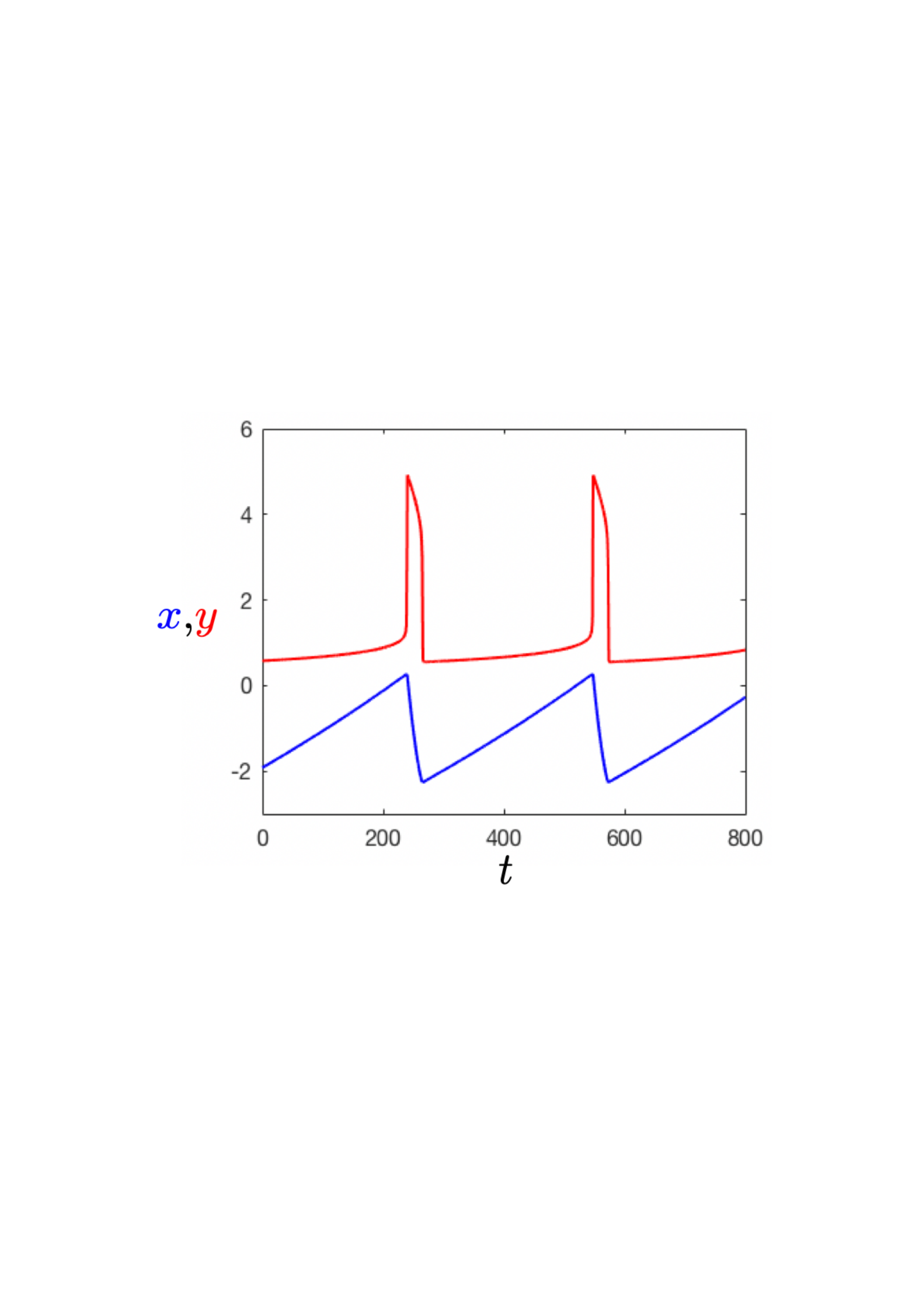}
  \caption{}\label{2STraceFig}
  \label{fig:sub1}
\end{subfigure}%
\begin{subfigure}{.5\textwidth}
  \centering
  \includegraphics[trim={2.5cm 8.5cm 0 8.5cm},scale=0.5]{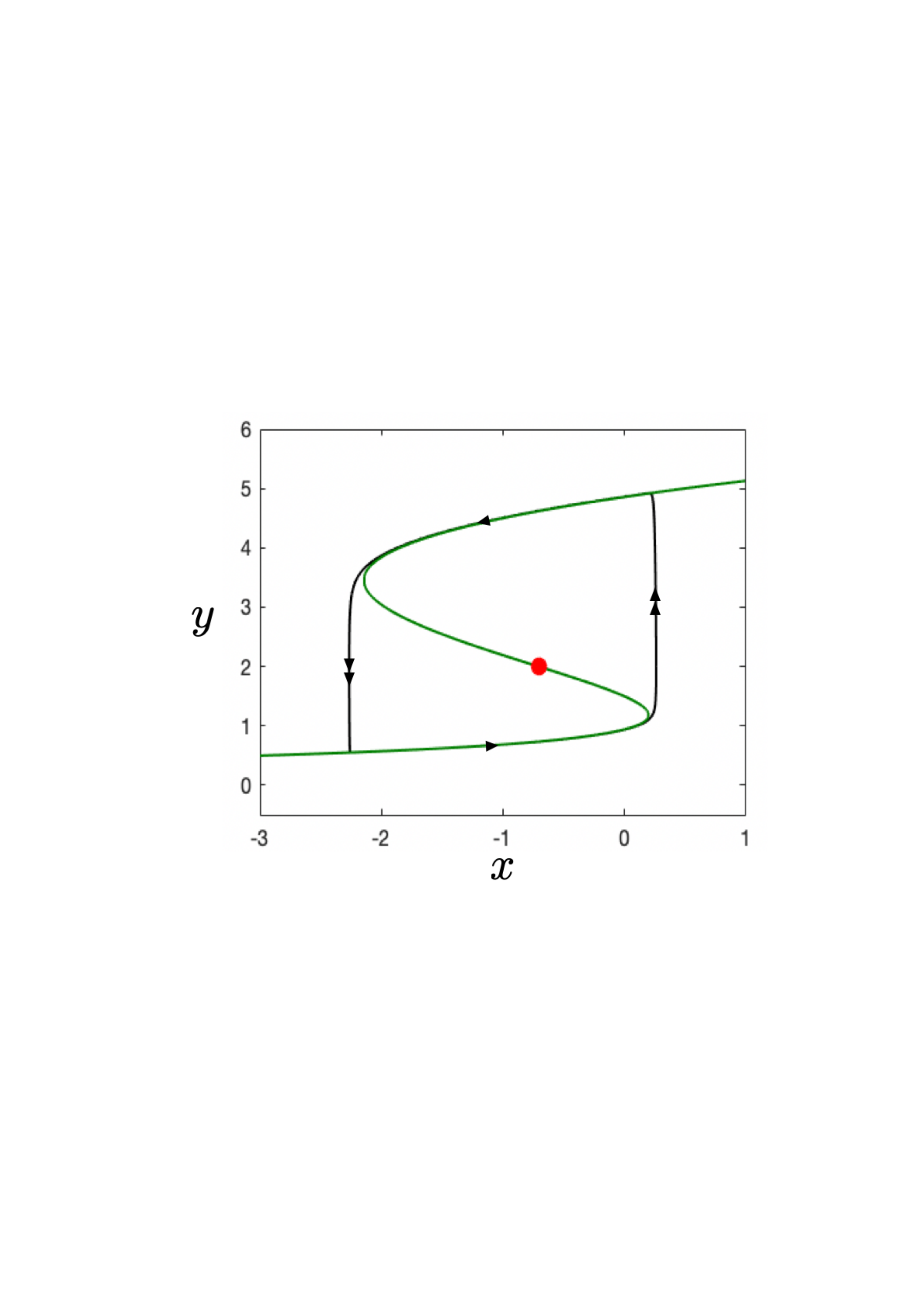}
  \caption{}\label{4STraceFig}
  \label{fig:sub5}
\end{subfigure}
\caption{Four-stroke relaxation oscillation in \eqref{24Trans} for $(\epsilon,\delta)=(5,10^{-2})$, with all other parameters the same as in Figure \ref{2TransFig}.}\label{4TransFig}
\label{fig:test2}
\end{figure}

\begin{itemize}
\item[R1.] $\epsilon\ll1$, $\delta=\mathcal{O}(1)$. Two-stroke relaxation oscillations;
\item[R2.] $\epsilon=\mathcal{O}(1)$, $\delta\ll1$. Four-stroke relaxation oscillations.
\end{itemize}
Hence, there is a transition from two- to four-stroke relaxation oscillations as one traverses a path in $(\epsilon,\delta)$-parameter space from region R1 to R2. 
This example shows that a single oscillator can exhibit both two- and four-stroke relaxation oscillations if there are multiple small perturbation parameters in the model which allows for different singular limits.

\begin{rem}
In the context of stick-slip oscillations, the case $0<\delta\ll\epsilon\ll1$ is non-physical: the limit $\epsilon\to0$ approximates a discontinuity, so one should have $\epsilon<\delta$ asymptotically.
\end{rem}

\subsection{Timescales and the two/four-stroke distinction }\label{3ScaleSec}

Consider system \eqref{24Trans} with $0<\epsilon\ll1$ and $\delta=1$. If we define the parameters $a_1$, $a_3$ as in \eqref{PolySSChar}, this system coincides with the two-stroke oscillator model \eqref{SS} with polynomial characteristic. Our analysis in Section \ref{LayerSec} showed that the equilibrium $p_0$ is stable for $v_0>v_m$, in which case oscillations are not possible. Physically, large belt speeds $v_0>v_m$ mean that the mass cannot `stick': its position stays fixed while the belt slides underneath it in an equilibrium state known as `steady sliding' \cite{Thomsen2003,Panovko1965,Chen2014}. As one decreases the belt speed, small amplitude oscillations known as `pure-slip' oscillations appear for $v_0<v_m$. These oscillations are not of stick-slip type, and exist only in a narrow parameter regime $(v_{ss},v_m)$; see e.g. \cite{Dankowicz2000}. Stick-slip oscillations occur only once the belt speed is decreased below $v_{ss}$; see Figures \ref{OscTypeFig} and \ref{delta1Fig}. The value of $v_m$ can be identified as a supercritical Andronov-Hopf bifurcation, but the value $v_{ss}$ is typically harder to identify and known only for a few specific cases. For example, the authors in \cite{Thomsen2003} show that under the assumption $\mu_s-\mu_m\ll1$,
\begin{equation}\label{ThomsenVals}
v_{ss}=\sqrt{\frac{4}{5}}v_m,
\end{equation}
for the stick-slip oscillator with characteristic \eqref{PolySSChar}.
\begin{figure}[t]
\captionsetup{format=plain}
\hspace{0em}\centerline{\includegraphics[trim={0 3cm 0 3cm},scale=.4]{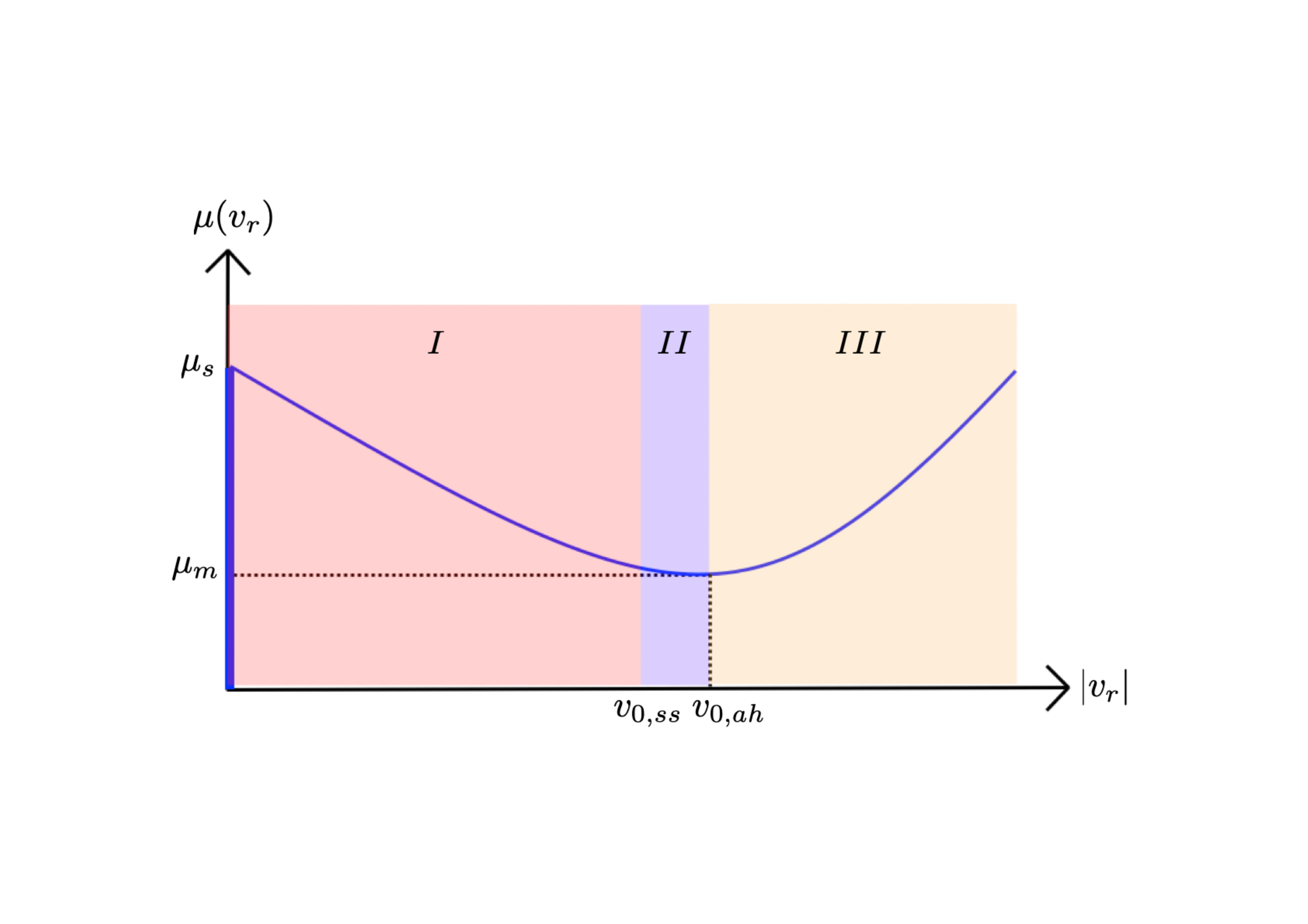}}
\caption{Behaviour for different belt speeds $v_0$. Region $I$: stick-slip oscillation; region $II$: pure-slip oscillation; region $III$: steady sliding (no oscillation).}\label{OscTypeFig}
\end{figure}

\begin{figure}[t]
\captionsetup{format=plain}
\centering
\begin{subfigure}{.32\textwidth}
  \centering
  \includegraphics[trim={2cm 7cm 0 7cm},scale=0.25]{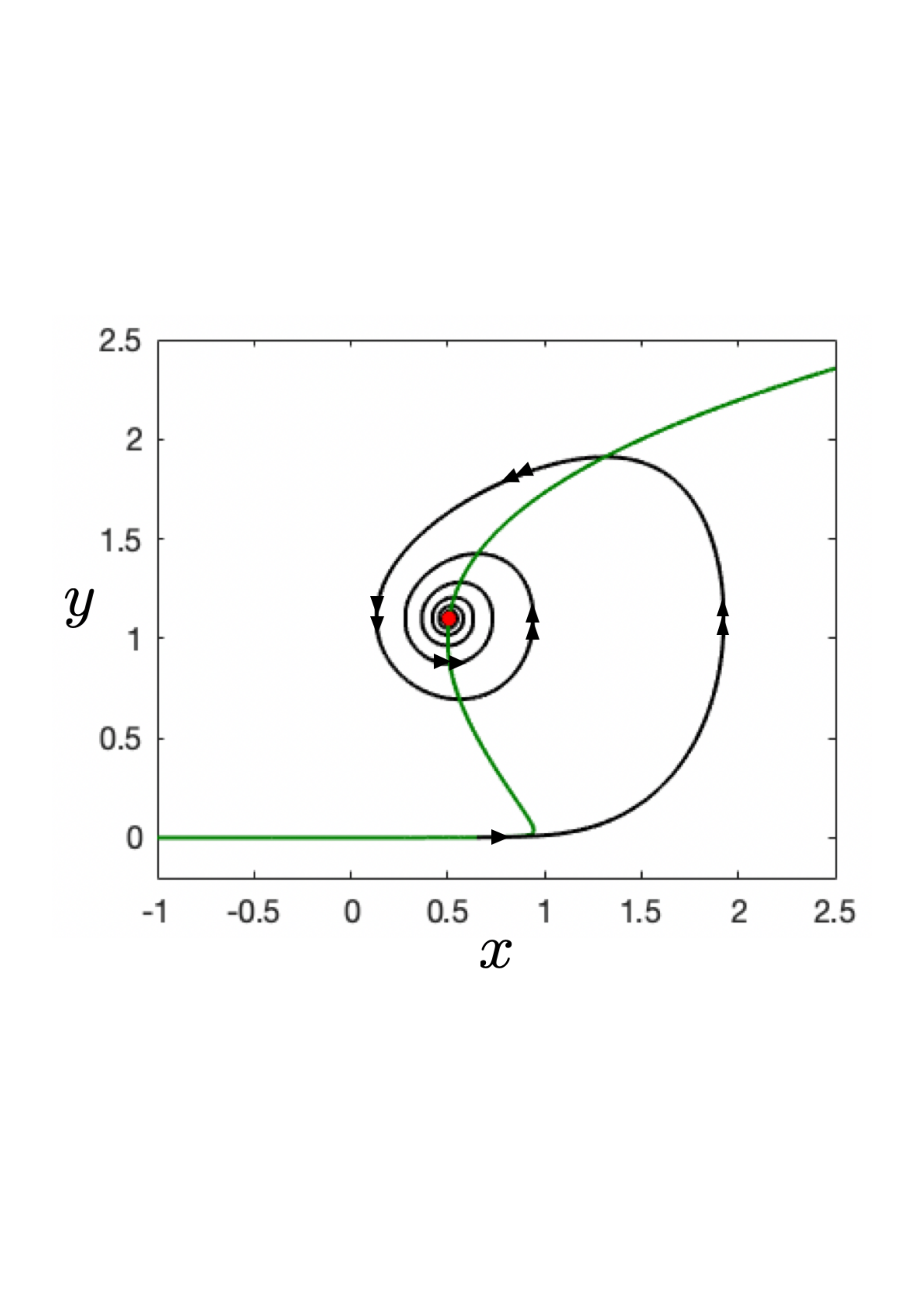}
  \caption{}
  \label{fig:sub1}
\end{subfigure}%
\begin{subfigure}{.32\textwidth}
  \centering
  \includegraphics[trim={2cm 7cm 0 7cm},scale=0.25]{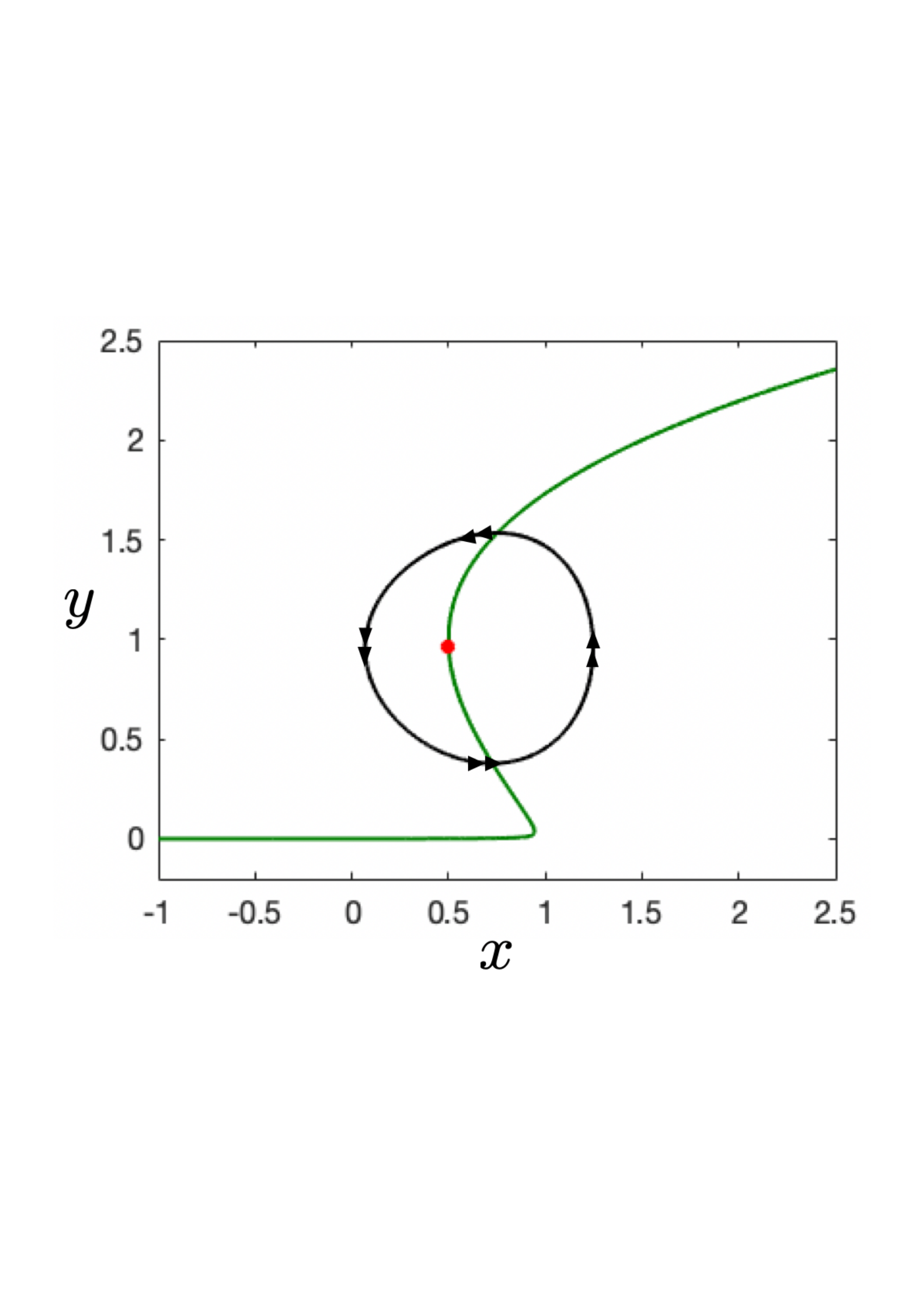}
  \caption{}
  \label{fig:sub5}
\end{subfigure}
\begin{subfigure}{.32\textwidth}
  \centering
  \includegraphics[trim={2cm 7cm 0 7cm},scale=0.25]{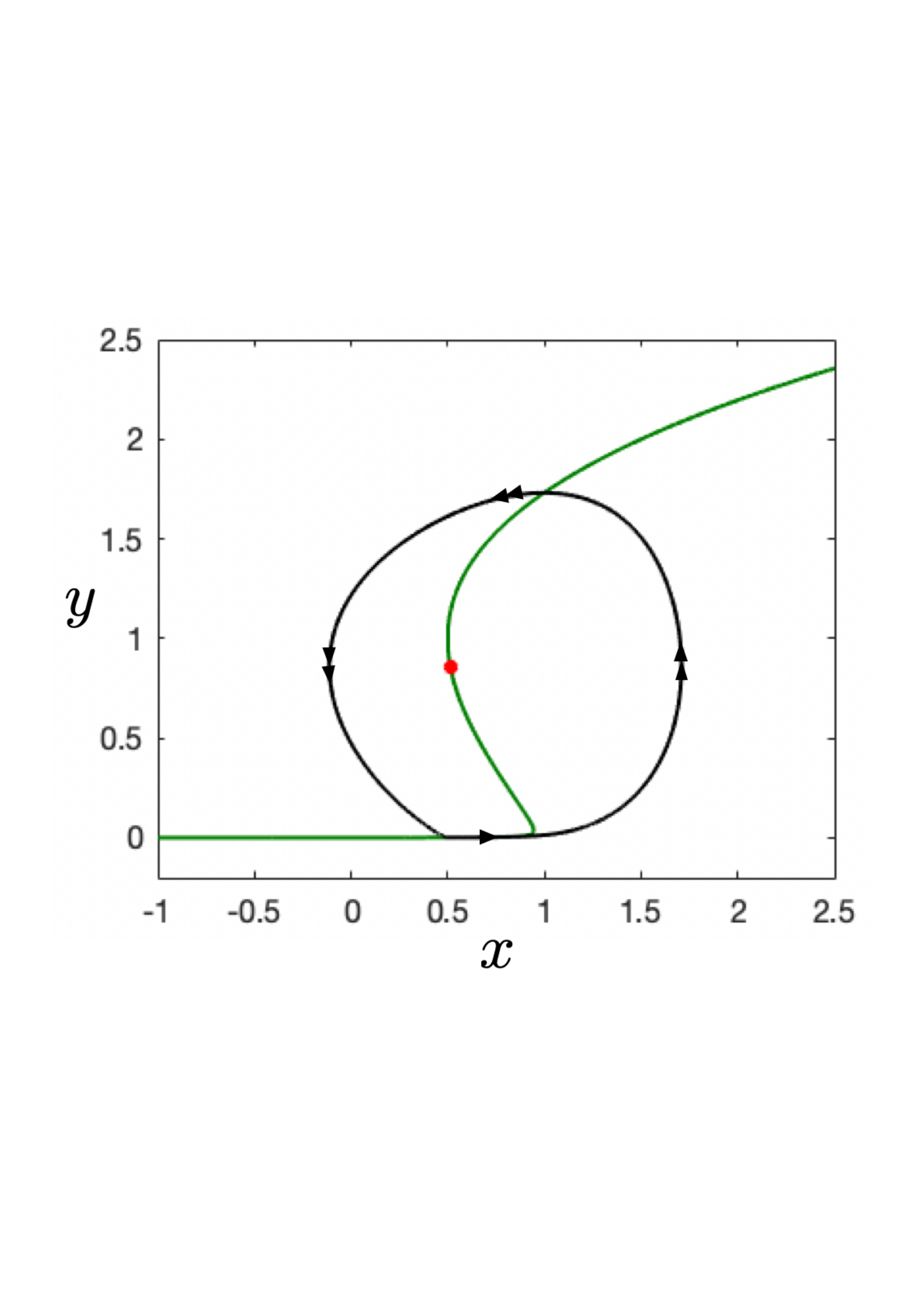}
  \caption{}
  \label{fig:sub5}
\end{subfigure}
\caption{System \eqref{24Trans} with $\delta=1$ and $(\mu_s,a_1,a_3,\epsilon)=(1,3/4,1/4,10^{-3})$. Shown are representative dynamics for different belt speeds: (a) steady sliding for $v_0=1.1$; (b) pure-slip oscillation for $v_0=0.96$; (c) stick-slip oscillation for $v_0=0.86$.}\label{delta1Fig}
\label{fig:test}
\end{figure}

\begin{figure}[t]
\captionsetup{format=plain}
\centering
\begin{subfigure}{.32\textwidth}
  \centering
  \includegraphics[trim={2cm 7cm 0 7cm},scale=0.25]{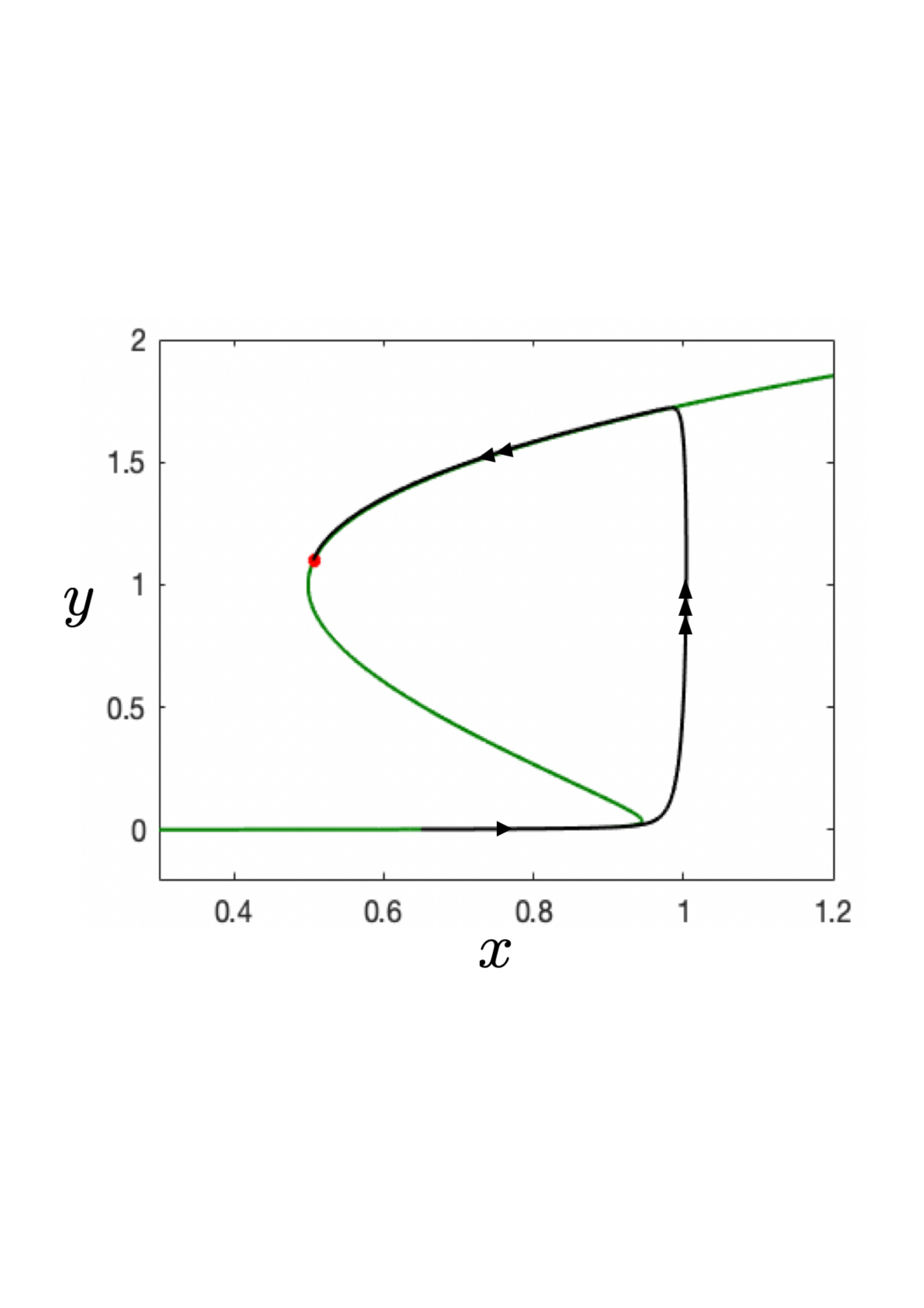}
  \caption{}
  \label{fig:sub1}
\end{subfigure}%
\begin{subfigure}{.32\textwidth}
  \centering
  \includegraphics[trim={2cm 7cm 0 7cm},scale=0.25]{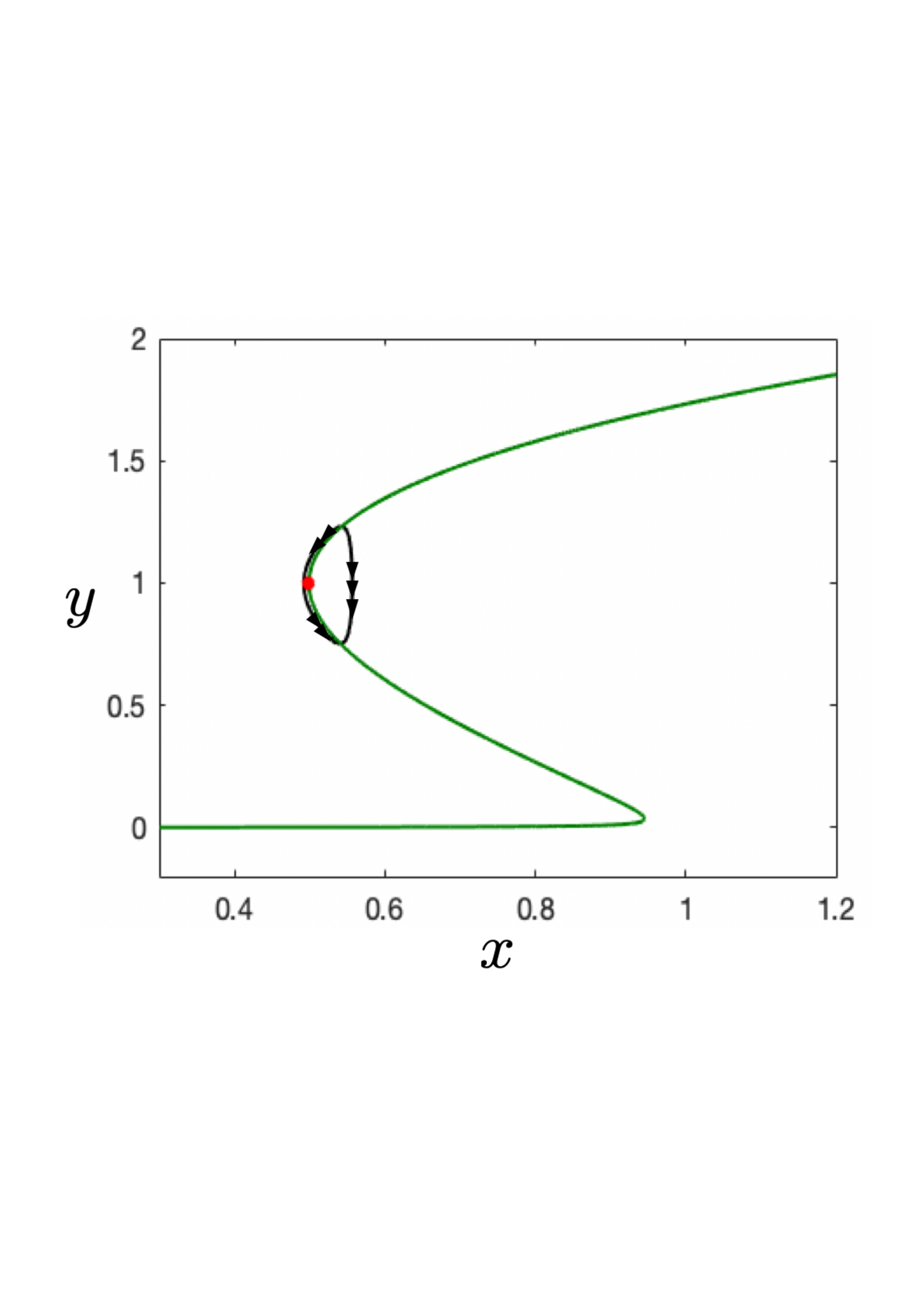}
  \caption{}
  \label{fig:sub5}
\end{subfigure}
\begin{subfigure}{.32\textwidth}
  \centering
  \includegraphics[trim={2cm 7cm 0 7cm},scale=0.25]{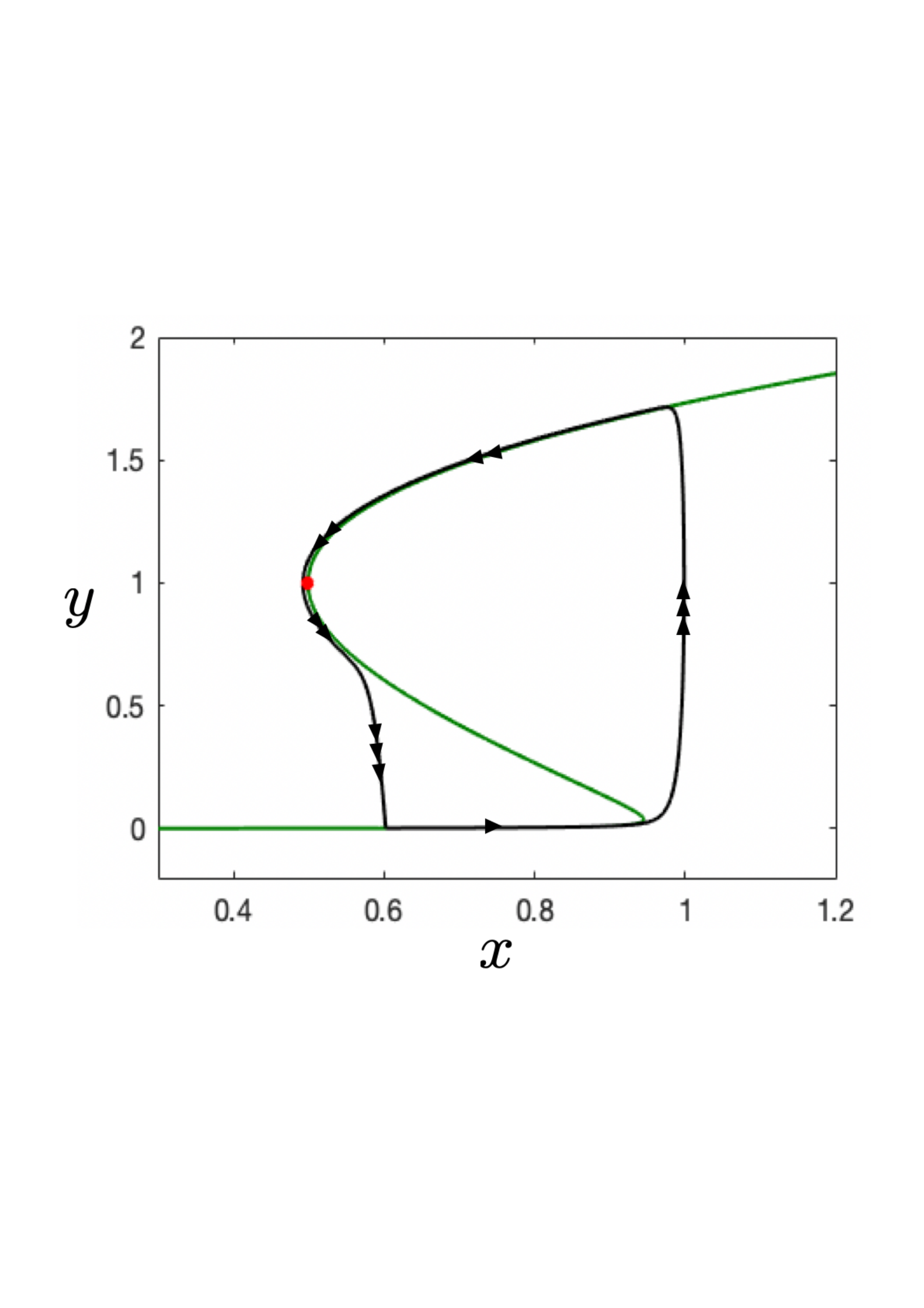}
  \caption{}\label{delta001cFig}
  \label{fig:sub5}
\end{subfigure}
\caption{System \eqref{24Trans} with $\delta=\textcolor{black}{10^{-2}}$ and $(\mu_s,a_1,a_3,\epsilon)=(1,3/4,1/4,10^{-3})$. Shown are representative dynamics for different belt speeds: (a) steady sliding for $v_0=1.1$; (b) pure-slip oscillation for $v_0=0.997166$; (c) stick-slip oscillation for $v_0=0.997165$.}\label{delta001Fig}
\label{fig:test}
\end{figure}

\

Now consider the case $0<\epsilon\ll\delta\ll1$, which corresponds physically to the case of a large normal force $(\delta=\tilde N^{-2})$. Figure \ref{delta001Fig} shows that by decreasing $\delta$ solutions begin to track the upper branch of the characteristic in phase space. The presence of two singular perturbation parameters indicates the presence of three timescales, however, which distinguishes the relaxation oscillations observed for $0<v_0<v_{ss}$ from vdP-type relaxation oscillations.

The additional timescale allows for an interpretation of the transition from steady sliding to stick-slip using standard GSPT. The layer problem obtained in the limit $\epsilon\to0$ has Jacobian at $p_0$ given by
\[
J=
\begin{pmatrix}
{0} & {-\delta v_0} \\
{v_0} & {(a_1-3a_3v_0^2)v_0}
\end{pmatrix},
\]
which has eigenvalues $\pm i\delta v_0^2$ at the Andronov-Hopf value $v_0=\sqrt{a_1/3a_3}$, i.e. there is a singular Andronov-Hopf bifurcation \cite{Krupa2001b,Dum1996,Wechselberger2015} as $\delta\to0$. Moreover, one observes a rapid decrease in the width of the interval $(v_{ss},v_m)$ as $\delta\to0$. This can be seen by comparing Figure \ref{delta001Fig}, which shows the dynamics for different belt speeds with $\delta=10^{-2}$, with Figure \ref{delta1Fig}. Pure-slip and stick-slip cases in Figure \ref{delta001Fig} are separated by a change in belt speed $v_0$ which is $\mathcal{O}(10^{-6})$. Both observations point to the existence of a \emph{canard explosion} in system \eqref{24Trans} with $0<\epsilon\ll\delta\ll1$; a dynamic phenomena characterised by a smooth transition from small-amplitude oscillations born in a singular Andronov-Hopf bifurcation to large-amplitude relaxation oscillation over an exponentially small interval in parameter space. In particular, determination of the parameter value $v_{ss}$ separating pure-slip and stick-slip solutions reduces to the problem of locating the solution which tracks the inner branch of the characteristic for the longest time.\footnote{In the language of standard GSPT, the value $v_{ss}$ corresponds to the location of the \textcolor{black}{\textit{maximal canard}}, which separates canard cycles with and without head.} A complete study of the dynamics associated with system \eqref{24Trans} constitutes future/ongoing work.

\section{Conclusion and future work}\label{ConcSec}

Two-stroke oscillation is an important phenomenon occurring across nature, as well as in a host of engineering problems. We have illustrated that two-stroke oscillators can be well described via an underlying singularly perturbed problem featuring two-stroke relaxation oscillation. Geometric singular perturbation theory can be adapted for the study of two-stroke relaxation oscillations, as we showed in the context of a number of applications deriving from the study of electronic oscillators, and mechanical oscillators with friction. Our study showed that relaxation oscillations can occur under minimal conditions which differ from those associated with the (four-stroke) vdP-type relaxation oscillations. This observation is made also in \cite{Kristiansen2017,Rankin2011,Brons2005}, where two-stroke relaxation oscillations are observed in singularly perturbed problems in standard form \eqref{SF}. The models proposed in these works describe two-stroke relaxation oscillations as perturbations of singular relaxation cycles containing segments at infinity. Our approach shows that by relaxing the requirement that the system is globally expressible in the standard form \eqref{SF} and hence allowing for non-zero curvature of the layer flow, two-stroke relaxation cycles can be described as perturbations of singular relaxation cycles residing in a compact region of phase space. Moreover, our approach is applicable to the analysis of systems like \eqref{SS2}, which cannot be understood as a standard form problem with a return mechanism at infinity.


The manuscript also raises a number of interesting questions in both theory and application. The remaining discussion focuses on a small number of these in turn. Each constitutes either current or future work.

\

{\bf Onset of stick-slip by canard explosion in a three-timescale problem.} As outlined in Section \ref{3ScaleSec}, the transition from `steady sliding'  to `stick-slip' oscillation in the model \eqref{24Trans} with $0<\epsilon\ll\delta\ll1$ appears to occur via a canard explosion, with the transition from small-amplitude sliding oscillations to large-amplitude stick-slip oscillations occurring at the transition from canard cycles without head to canard cycles with head. The observed dynamics is, more generally, a feature of all stick-slip oscillators with a characteristic $\mu(v_r)$ that has a turning point bounded away from $v_r=0$. We are interested in a complete study of the canard explosions exhibited by these systems, and in particular, with the transition from (two-timescale) canard cycles without head to (three-timescale) canard cycles with head; see Figure~\ref{delta001Fig}.

\

{\bf More general friction forces \textcolor{black}{and exponential nonlinearities}.} A natural progression from this work is to study two-stroke oscillators for which the friction has `rate-and-state' dependence. Such a generalised friction force $F_f(x,v_r,t)$ models rate-and-state dependence in a wide range of physical phenomena and engineering applications \cite{Berger2002,Dankowicz2000}. From a modelling point of view, incorporating rate dependence in particular leads one into the realm of non-autonomous dynamical systems (`open' models). If the friction force changes only slowly in time then singular perturbation techniques can still provide answers to understand the underlying non-autonomous dynamics; see e.g.~\cite{Wechselberger2013}.
 
In addition to the difficulties associated with non-autonomous dynamical systems, many of the existing models involving exponential or logarithmic type (friction) terms lead, in the context of singularly perturbed problems, to more general problems surrounding the difficulties associated with the breakdown of normal hyperbolicity at greater than algebraic rates. Significant progress has been made in this area by \textcolor{black}{in \cite{Kristiansen2017,Bossolini2017b,Kristiansen2019b}} by adapting the blow-up method in order to deal with such difficulties. \textcolor{black}{These techniques are also applicable in the study of certain two-stroke oscillations with exponential nonlinearities occurring in the aformentioned electrical oscillators originally introduced by Hester \cite{Hester1968} which is ongoing work.}

\

{\bf Canards and bifurcations two-stroke relaxation oscillators.} In the case of vdP-type (four-stroke) relaxation oscillation, the mechanism responsible for the onset of relaxation oscillation is a canard explosion, i.e. a rapid transition from small to large amplitude relaxation oscillations under an exponentially small variation in parameter space. Canard explosion in the vdP oscillator with constant forcing is well understood \cite{Krupa2001b}, and known to correlate with the passage of an equilibrium over a fold under additional parameter variation. Although the canard explosion itself is a global phenomenon, it is driven by the local dynamics, in particular the occurrence of a (singular) Andronov-Hopf bifurcation, and the presence of small (and large) canard cycles.

Given the local equivalence between general singularly perturbed problems \eqref{Gen1} and standard form problems \eqref{SF} discussed in Section \ref{LocalSec1}, then, a natural question is the following: what is the global effect of introducing analogous local dynamics in the case of a two-stroke relaxation oscillator? \W{Canard explosions observed in general systems \eqref{Gen1} capable of generating two-stroke relaxation oscillation may differ qualitatively from the vdP-type canard explosion in a number of ways (see, e.g., \cite{Wechselberger2019})}: among other distinguishing features, canard cycles can have arbitrarily large amplitudes which exceed that of the relaxation cycle significantly, and the onset of stable two-stroke relaxation oscillation is possible only if the associated (singular) Andronov-Hopf bifurcation is subcritical. In addition to the study of canard explosion under `minimal' conditions, we \W{currently} consider the possibilities for global dynamics associated with two-stroke relaxation oscillators with up to two additional equilibria on the slow manifold. The occurrence of a local (singular) Bogdanov-Takens bifurcation (see \cite{Wechselberger2015,Maesschalck2011b}) in this scenario leads to a wide variety of possibilities for global bifurcations and dynamics.

\

{\bf \textcolor{black}{Regularised boundary equilibrium bifurcation in stick-slip.}} \textcolor{black}{The bifurcations discussed in Section \ref{3ScaleSec} are known to occur in general stick-slip problems for belt speeds $v_0$ bounded uniformly away from zero. Bifurcations occurring in a neighbourhood of the regularised region $v_r = 0$ are necessarily more complex, and are associated with \textit{boundary equlibrium bifurcations} in the piecewise-smooth (PWS) literature \cite{Bernardo2008,Jeffrey2018,Kuznetsov2003}, which occur when an equilibrium collides with a discontinuity surface under variation of a parameter. In an ongoing work, we consider the case of a (regularised) `boundary focus' bifurcation which occurs in stick-slip problems as $v_0 \to 0^+$. In the PWS case, it is known that the amplitude of stick-slip cycles shrinks to zero and terminates at the point of collision. In the regularised analogue, we identify an Andronov-Hopf bifurcation as the bifurcation responsible for the termination of oscillations for $0 < \epsilon \ll1$, and describe the growth of these cycles to order one stick-slip type cycles observed for belt speeds $v_0$ bounded uniformly away from zero.}

\

{\bf The general framework for $\mathbb{R}^n$.} The framework for GSPT applied to general systems \eqref{Gen1} developed in this work extends beyond the study of planar two-stroke oscillations. In particular, the framework presented in this work generalises to higher dimensional singular perturbation problems in the general form
\begin{equation}\label{GenRn}
z'=N(z)f(z)+\epsilon G(z,\epsilon), \qquad z\in\mathbb{R}^n, \qquad 0<\epsilon\ll1,
\end{equation}
where $N(z)$ denotes a matrix of dimension $n\times (n-k)$ and $f(z)$ is a $(n-k)$-dimensional (vector-valued) function, and the existence of an $k$-dimensional \textcolor{black}{ regularly embedded submanifold} $S=\{z \in \mathbb{R}^n|f(z)=0\} \subset \mathbb R^n$ is assumed, $1\le k<n$. This work is developed in \cite{Wechselberger2019}.

\section*{Acknowledgments}
This work was supported by the Australian Research Council DP180103022 grant. \textcolor{black}{The authors would also like to thank Peter Szmolyan, Ilona Kosiuk, Kristian Uldall Kristiansen and the referees for helpful comments on an early version of the manuscript. In particular, we would like to acknowledge the work of Peter Szmolyan and Ilona Kosiuk as motivation for this work.}

\bibliographystyle{vancouver}
\bibliography{bibnew}

\begin{appendix}

\section{Local transformation of system \eqref{Gen1} to standard form}\label{app:rectifying}

\W{Due to the existence of a local fibration in a sufficiently narrow tubular neighbourhood $\mathcal B$ of $S$, the study of general slow-fast systems \eqref{Gen1} is related to the study of standard form problems \eqref{OurSF} by local (topological) equivalence. This is emphasised in the following theoretical result, which provides the means for locally rectifying the fibers near $S$.

\begin{lem}\label{GenToStnd} Given system \eqref{Gen1} with critical manifold $S$. Then there exists a local coordinate transformation such that system \eqref{Gen1} takes the standard form
\begin{equation}\label{SF2}
\begin{pmatrix}
{x'} \\
{v'}
\end{pmatrix}
=
\begin{pmatrix}
{1} \\
{0}
\end{pmatrix}
\tilde{f}(x,v)+\epsilon
\begin{pmatrix}
{g_1(x,v,\epsilon)} \\
{g_2(x,v,\epsilon)}
\end{pmatrix}
\end{equation}
in a local tubular neighbourhood $\mathcal{B}$ of $S$.
\end{lem}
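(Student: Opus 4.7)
The plan is to apply the flow-box (rectification) theorem to the auxiliary vector field $N$. By Assumption~\ref{BasicAss}, the set $V_0$ of singularities of $N$ is disjoint from $S$, so by continuity $N$ is non-vanishing in an open neighbourhood of $S$. This permits a straightening of $N$ on a suitably chosen tubular neighbourhood $\mathcal{B}$ of (a local piece of) $S$.

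Concretely, I would fix $p\in S$ and invoke the flow-box theorem to obtain a local $C^r$-diffeomorphism $\Phi:(x,v)\mapsto z$ from a neighbourhood of the origin in the $(x,v)$-plane onto $\mathcal{B}$ which rectifies $N$, i.e.
\begin{equation*}
(D\Phi(x,v))^{-1}\,N(\Phi(x,v)) = (1,0)^T.
\end{equation*}
Substituting this coordinate change into system~\eqref{Gen1} would then yield
\begin{equation*}
\begin{pmatrix} x' \\ v' \end{pmatrix} = (D\Phi)^{-1}\left[N(\Phi)\,f(\Phi) + \epsilon\, G(\Phi;\epsilon)\right] = \begin{pmatrix} 1 \\ 0 \end{pmatrix} \tilde{f}(x,v) + \epsilon \begin{pmatrix} g_1(x,v,\epsilon) \\ g_2(x,v,\epsilon) \end{pmatrix},
\end{equation*}
where $\tilde{f}(x,v):=f(\Phi(x,v))$ and $(g_1,g_2)^T:=(D\Phi)^{-1}G(\Phi(x,v);\epsilon)$. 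This is exactly the desired standard form~\eqref{SF2}; in new coordinates $S$ is the curve $\{\tilde{f}(x,v)=0\}$, and the layer problem reduces to $x'=\tilde f(x,v)$, $v'=0$, with fast fibers parallel to the $x$-axis.

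The main obstacle is essentially nonexistent at the level of the stated conclusion: the proof reduces to invoking the flow-box theorem plus a direct calculation. The only subtlety lies in the geometry of the tubular neighbourhood. A naive construction using $S$ itself as a transversal to the integral curves of $N$ breaks down at contact points (where $N$ becomes tangent to $S$), but this causes no real issue since the abstract flow-box theorem applies at every $p\in S\setminus V_0$; one may simply use a different local transversal through contact points to produce a tubular neighbourhood covering a small arc of $S$ that contains such points. The \emph{local} qualifier in the statement of the lemma is precisely what allows us to sidestep the global obstruction that would otherwise arise from the tangency at contact points.
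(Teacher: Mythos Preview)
Your proof is correct and follows essentially the same approach as the paper: both rectify the fast fibers (integral curves of $N$) in a tubular neighbourhood of $S$. The only difference is presentational: you invoke the flow-box theorem as a black box to send $N$ to $(1,0)^T$, whereas the paper keeps the original $x$-coordinate and introduces $v=L(x,y)$ with $L$ a local first integral of $N$ (so that $\langle DL,N\rangle=0$), yielding $N\mapsto(N_1,0)^T$ and $\tilde f=N_1 f$. These constructions differ only by a reparametrisation along the fibers, and your remark about choosing a transversal other than $S$ near contact points is exactly the content of the paper's ``without loss of generality'' clause on $D_yL\neq 0$.
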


\begin{proof}
Consider system \eqref{Gen1} with coordinates $z=(x,y)^T$. Assume without loss of generality that fast fibers in a local tubular neighbourhood $\mathcal{B}$ of $S$ can be described as level sets $L(x,y)=c\in I\subset\mathbb{R}$ for some smooth real-valued function $L(x,y)$ with $D_yL(x,y)|_{\mathcal B}\neq0$, i.e. each fast fiber can be locally written as a graph $y=Y^c(x)$.
We define a new local coordinate $v=L(x,y)$ which has a locally well-defined inverse $y=\tilde{M}(x,v)$, and obtain the standard form system
\[
\begin{pmatrix}
{x'} \\
{v'}
\end{pmatrix}
=
\begin{pmatrix}
{1} \\
{0}
\end{pmatrix}
N_1(x,\tilde{M}(x,v))f(x,\tilde{M}(x,v))+\epsilon
\begin{pmatrix}
{ G_1(x,\tilde{M}(x,v),\epsilon)} \\
{\langle DL(x,\tilde{M}(x,v)),G(x,\tilde{M}(x,v),\epsilon)\rangle}
\end{pmatrix},
\]
where we have used the fact that $\langle DL,N\rangle|_{\mathcal B}=0$ is the defining condition for a local fast foliation.
\end{proof}
}

\section{Proof of Theorem \ref{RCPThm}}\label{CPProof}

Without loss of generality, assume that $F=(x_F,y_F)=(0,0)$, and $D_yf|_F>0$ (one can always ensure this by adjusting $N$ if necessary). Making the preliminary transformation $u=f(x,y)$ and denoting the inverse of the transformation by $y=M(x,u)$ as in proof of Lemma \ref{FlatSLem}, we obtain the system
\begin{equation}\label{NGTilde1}
\begin{split}
\begin{pmatrix}
{x'} \\
{u'}
\end{pmatrix}
&:=
\tilde{N}(x,u)u+\epsilon\tilde{G}(x,u,\epsilon) \\
&=
\begin{pmatrix}
{N_1(x,M(x,u))} \\
{\langle  \textcolor{black}{\nabla} f(x,M(x,u),N(x,M(x,u))\textcolor{black}{)}\rangle}
\end{pmatrix}
u+\epsilon
\begin{pmatrix}
{G_1(x,M(x,u),\epsilon)} \\
{\langle  \textcolor{black}{\nabla} f(x,M(x,u)),G(x,M(x,u),\epsilon)\rangle}
\end{pmatrix},
\end{split}
\end{equation}
and the sections $\Sigma_{in/out}$ defined in \eqref{RCPSecs} with $(x_F,y_F)=(0,0)$ are mapped to
\[
\tilde{\Sigma}_{in}=\big{\{}(-\rho,M(-\rho,u))\big| u\in\tilde{J}\big{\}}, \qquad \tilde{\Sigma}_{in}=\big{\{}(\rho,M(\rho,u))\big| u\in\tilde{J}\big{\}},
\]
which can in turn be rewritten as
\begin{equation}\label{K1Segs}
\tilde{\Sigma}_{in}=\big{\{}(-\rho,u)\big| u\in\hat{J}_{in}\big{\}}, \qquad
\tilde{\Sigma}_{in}=\big{\{}(\rho,u)\big| u\in\hat{J}_{out}\big{\}},
\end{equation}
since $M(\rho,u)=\mathcal{O}(u)$. One can verify that the presence of a jump-off point at $(x,y)=(0,0)$ implies the presence of a jump-off point at $(x,u)=(0,0)$ in the system \eqref{NGTilde1}.\footnote{Since $D_yf|_F>0$, the coordinate transformation preserves orientation.} We consider the extended system obtained from \eqref{NGTilde1} by adding the trivial equation $\epsilon'=0$ and expanding about $(x,u,\epsilon)=(0,0,0)$,
\begin{equation}\label{ExpReg}
\begin{pmatrix}
{x'} \\
{u'} \\
{\epsilon'}
\end{pmatrix}
=
\begin{pmatrix}
{a_0+a_1x+a_2u+\cdots} \\
{b_1x+b_2u+\cdots} \\
{0}
\end{pmatrix}
u+\epsilon
\begin{pmatrix}
{c_0+c_1x+c_2u+c_3\epsilon+\cdots} \\
{d_0+d_1x+d_2u+d_3\epsilon +\cdots} \\
{0}
\end{pmatrix},
\end{equation}
where $a_0\neq0$, $b_1\neq0$, $d_0\neq0$ by the definition of regular contact (Definition \ref{RCPDef}). We assume in the following that $a_0$, $b_0$, $d_0$ are positive; the proof is analogous for different choices provided the relative orientations are consistent with $F$ being a jump-off point. We now define the blow-up $\Phi:S^2\times[0,\rho]\to\mathbb{R}^3$ by the mapping
\begin{equation}\label{BUMap}
(x,u,\epsilon)=\big{(}r\bar{x},r^2\bar{u},r^3\bar{\epsilon}\big{)}, \qquad \big{(}\bar{x},\bar{u},\bar{\epsilon},r\big{)}\in S^2\times[0,\rho],
\end{equation}
where in particular, since ${\epsilon}\geq0$, we need only consider the dynamics on and near the hemisphere
\[
(\bar{x},\bar{u},\bar{\epsilon})\in S^2_+=\{(\bar{x},\bar{u},\bar{\epsilon})|\bar{x}^2+\bar{u}^2+\bar{\epsilon}^2=1,\bar{\epsilon}\geq0\}.
\]
The dynamics are studied in different coordinate charts. We define entry and exit charts $K_1:\bar{x}=-1$ respectively $K_3:\bar{x}=1$, and a family rescaling chart $K_2:\bar{\epsilon}=1$. In line with conventions, we denote the image of an object $\gamma$ under the blow-up map \eqref{BUMap} by $\bar{\gamma}$, and \textcolor{black}{its} image in a specific chart $K_j$ by $\gamma_j$, for $j\in\{1,2,3\}$. Charts $K_1$ and $K_3$ allow one to describe the extension of the (extended) slow manifolds $\bar{S^a_\epsilon}$ and $\bar{S^r_\epsilon}$ respectively into the neighbourhood of $S_+^2$, while the flow on (and near) the interior of $S_+^2$ is understood in chart $K_2$.

\

In order to put the technicalities into context, we briefly summarise the key findings in charts $K_1$ and $K_3$ before considering a more detailed analysis. The main dynamical features are sketched in Figure \ref{RegBU1Fig}, which shows a birds-eye perspective of the dynamics in the blow-up, restricted to the hemisphere $S_+^2$. We refer to Figure  \ref{RegBU1Fig} throughout the proof; for now it suffices to note the existence of four singularities on the equator $S^1$. Those denoted $p_a$ and $p_r$ are partially hyperbolic and attracting/repelling along $S^1$ respectively, and $q_{in}$ and $q_{out}$ are hyperbolic singularities corresponding to the intersection with the (extended) critical fiber $\mathcal{\bar{F}}$. 

\begin{figure}[t]
\captionsetup{format=plain}
\hspace{0em}\centerline{\includegraphics[trim={0 6cm 0 6cm},scale=.4]{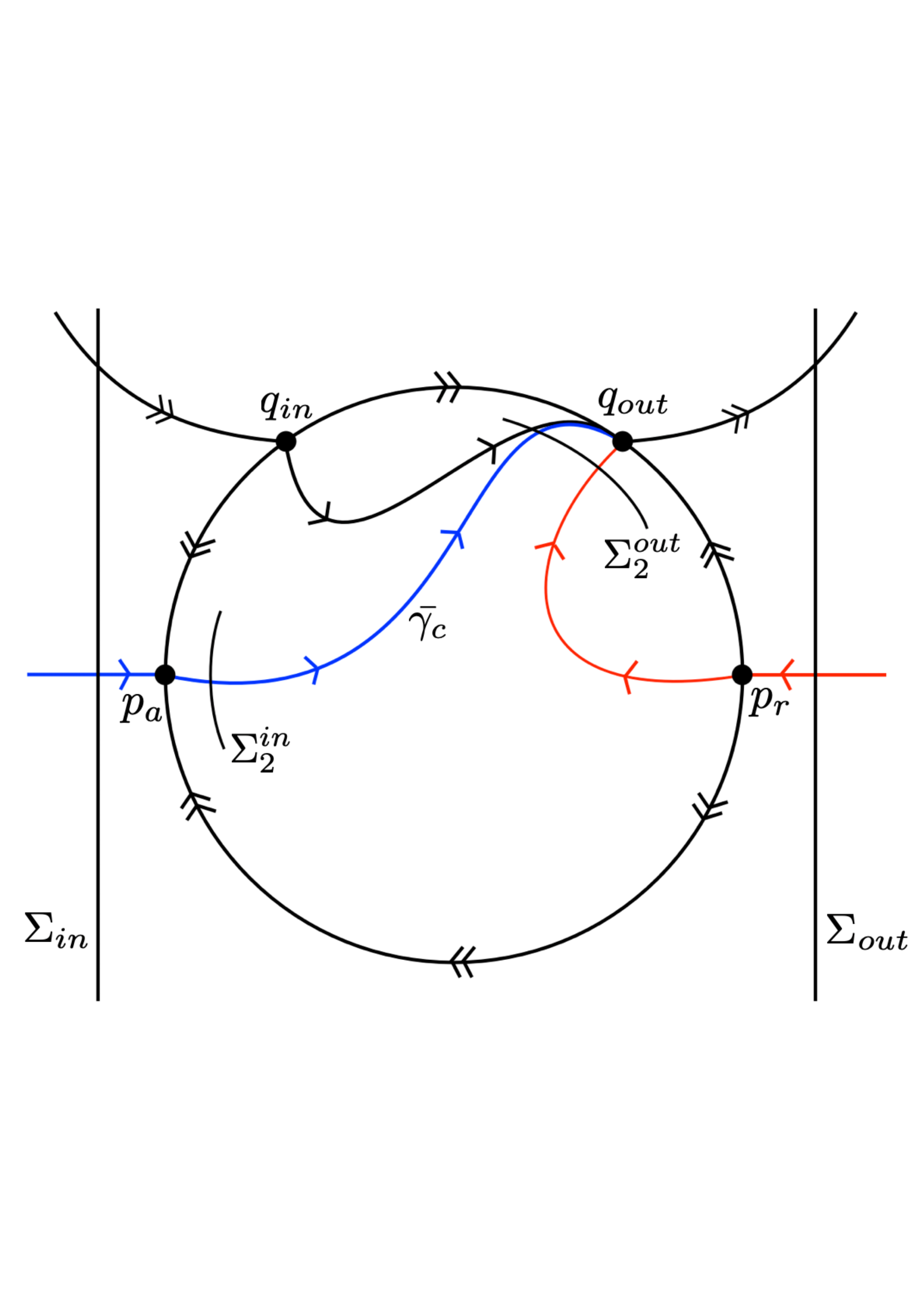}}
\caption{Dynamics on $S^{2,+}$.}\label{RegBU1Fig}
\end{figure}

\

We now present an analysis of the dynamics in charts $K_i$, $i\in\{1,2,3\}$ in turn. Most of the analysis is conceptually analogous to the work in \cite{Krupa2001a}, in which the authors study the dynamics of a regular fold point in standard form problems \eqref{OurSF}. We cite corresponding and relevant results contained in this work as the analysis proceeds.

\

\textbf{Chart $K_2$.} After an additional desingularisation $dt=r_2^{-1}d\bar{t}_2$, we obtain the following in the family rescaling chart $K_2$:
\begin{align}\label{K2Pert}
\begin{array}{lcl}
x_2'=a_0u_2+\mathcal{O}(r_2), \\
u_2'=d_0+b_1x_2u_2+\mathcal{O}(r_2), \\
r_2'=0,
\end{array}
\end{align}
where the subscript notation indicates the use of chart-specific coordinates
\[
(x,u,\epsilon)=(r_2x_2,r_2^2u_2,r_2^3).
\]
System \eqref{K2Pert} is a regular perturbation problem with perturbation parameter $r_2$, and so the dynamics are governed by the limiting system
\begin{align}\label{K2L}
\begin{array}{lcl}
x_2'=a_0u_2, \\
u_2'=d_0+b_1x_2u_2.
\end{array}
\end{align}

In the following we define the segments
\[
\Sigma_2^{in}=\big{\{}(x_2,u_2,r_2)\big|x=-\delta^{-1/3}, r_2\in[0,\delta^{1/3}\rho]\big{\}}, \qquad \Sigma_2^{out}=\big{\{}(x_2,u_2,r_2)\big|x=\delta^{-1/3}, r_2\in[0,\delta^{1/3}\rho]\big{\}},
\]
(see Figure \ref{RegBU1Fig}) and denote the transition map defined by the flow of \eqref{K2Pert} by $\Pi_2:\Sigma_2^{in}\to\Sigma_2^{out}$.

\begin{prop}\label{K2Prop} (cf. Proposition 2.3 and 2.4 in \cite{Krupa2001a}).
There exists a solution $\gamma_{c,2}$ for \eqref{K2L} such that for the initial condition $q\in\Sigma_2^{in}\cap\gamma_{c,2}$, we have the following:
\begin{itemize}
\item[(i)]
\[
\Pi_2(q)=\bigg(\delta^{-1/3},\bigg(\frac{b_1}{2a_0}\bigg)\delta^{-2/3}+\bigg(\frac{2d_0^2}{a_0b_1}\bigg)^{1/3}\Omega_0-\bigg(\frac{2d_0}{b_1}\bigg)\delta^{1/3}+\mathcal{O}(\delta),0\bigg)^T,
\]
\item[(ii)] A neighbourhood of $q$ is mapped diffeomorphically onto a neighbourhood of $\Pi_2(q)$.
\end{itemize}
\end{prop}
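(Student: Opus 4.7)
The plan is to reduce the limiting system \eqref{K2L} to a parameter-free normal form by rescaling, to identify the distinguished solution $\gamma_{c,2}$ of this normal form via matching with the attracting slow manifold as $x_2 \to -\infty$, and to read off the stated expression for $\Pi_2(q)$ by asymptotic expansion of $\gamma_{c,2}$ as $x_2 \to +\infty$. Part (ii) will then follow from smooth dependence of the flow on initial conditions and transversality at the entry/exit sections.

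First I would absorb the parameters $a_0,b_1,d_0$ via an anisotropic rescaling
\begin{equation*}
x_2 = \alpha X, \qquad u_2 = \beta U, \qquad \bar t_2 = \gamma T ,
\end{equation*}
with $\alpha,\beta,\gamma$ chosen as explicit monomials in $a_0,b_1,d_0$; in particular $\beta$ must carry the factor $(2d_0^2/(a_0 b_1))^{1/3}$ appearing in the statement, and $\gamma$ a factor of $(a_0 b_1 d_0)^{-1/3}$. The system \eqref{K2L} is thereby brought into a parameter-free Riccati-type normal form of the generic fold type (as in \cite{Krupa2001a}), whose orbits in the $(X,U)$-plane satisfy an ODE of the shape $dU/dX = (1+XU)/U$ up to universal numerical constants.

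Next I would identify the distinguished solution $\gamma_c$ by matching. As $X \to -\infty$ the attracting `slow' branch $\dot U = 0$ of the normal form has the asymptotic form $U \sim -\mathrm{const}/X > 0$, and the unique forward extension of this branch corresponds to the image of the attracting slow manifold carried into $K_2$ from chart $K_1$. Under the standard Riccati substitution $U \propto -W'/W$ the normal form linearises to a second-order ODE of Airy type, and $\gamma_c$ corresponds to the specific Airy/Bessel combination selected by the above matching condition; the universal constant $\Omega_0$ in the statement is precisely the first-subleading coefficient of this distinguished solution, exactly as in Proposition 2.3 of \cite{Krupa2001a}. A WKB-type balance in the Riccati equation then yields
\begin{equation*}
U(X) = A\,X^2 + \Omega_0 + B\,X^{-1} + \mathcal O(X^{-2})
\end{equation*}
as $X \to +\infty$, for explicit universal $A,B$. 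Evaluating at $X$ corresponding to $x_2 = \delta^{-1/3}$ and pulling back through $u_2 = \beta U$ reproduces the three explicit terms $(b_1/(2a_0))\delta^{-2/3}$, $(2d_0^2/(a_0 b_1))^{1/3}\Omega_0$ and $-(2d_0/b_1)\delta^{1/3}$ in (i); the $\mathcal O(\delta)$ remainder absorbs both the higher-order $\mathcal O(X^{-2})$ tail and the $\mathcal O(r_2)$ regular perturbation in \eqref{K2Pert}, the latter handled by regular perturbation theory on a compact rescaled-time segment of $\gamma_c$.

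Assertion (ii) is then immediate from smooth dependence of the flow on initial conditions together with the transversality $\dot X = a_0 U \neq 0$ of $\gamma_c$ to $\Sigma_2^{in}$ and $\Sigma_2^{out}$ at $q$ and $\Pi_2(q)$ respectively, which makes the local entry and exit maps diffeomorphisms. The main obstacle is the Airy-type asymptotic analysis that produces the universal constant $\Omega_0$; however, after the reduction to normal form this computation is classical and is essentially the one carried out in \cite{Krupa2001a} for the generic fold in standard form. The only genuinely new bookkeeping is tracking the dependence on the coefficients $a_0,b_1,d_0$ through the rescaling, which directly produces the factors in front of $\delta^{-2/3}$, $\Omega_0$ and $\delta^{1/3}$ in the statement.
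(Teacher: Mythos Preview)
Your proposal is correct and follows essentially the same approach as the paper: both recognise \eqref{K2L} as a Riccati equation, reduce to the parameter-free fold normal form of \cite{Krupa2001a} by rescaling, and read off the asymptotics of the distinguished solution $\gamma_{c,2}$ as $x_2\to\pm\infty$ to obtain (i), with (ii) following from regular perturbation theory and transversality. The paper's proof is terser---it simply quotes the asymptotic expansion of $\zeta(x_2)$ and defers the explicit rescaling $(\alpha,\beta,\gamma)$ to a later remark---whereas you spell out the rescaling and Airy reduction up front, but the substance is the same.
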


\begin{proof}
The key observation is that \eqref{K2L} is a Ricatti equation with special solution $\gamma_{c,2}$ (see Figure \ref{RegBU1Fig}). In particular, $\gamma_{c,2}$ can be parameterised as $(x_2,\zeta(x_2))$, where $\zeta(x_2)$ has known asymptotic behaviour described by
\[
\zeta(x_2)=\bigg(\frac{2d_0^2}{a_0b_1}\bigg)^{1/3}\Omega_0+\bigg(\frac{b_1}{2a_0}\bigg) x_2^2-\bigg(\frac{2d_0}{b_1}\bigg)\frac{1}{x_2}+\mathcal{O}\bigg(\frac{1}{x_2^3}\bigg), \qquad x_2\to\infty,
\]
where $\Omega_0$ is a known positive constant,\footnote{$\Omega_0$ is the smallest positive solution to $J_{-1/3}(2z^{3/2}/3)+J_{1/3}(2z^{3/2}/3)$, where $J_{-1/3}$ and $J_{1/3}$ are Bessel functions of the first kind. See Remark 2.4. in \cite{Krupa2001a} and also \textcolor{black}{\cite{Mishchenko1975}}.} and
\begin{equation}\label{Ricatti2}
\zeta(x_2)=-\bigg(\frac{d_0}{b_1}\bigg)\frac{1}{x_2}+\mathcal{O}\bigg(\frac{1}{x_2^4}\bigg), \qquad x_2\to-\infty.
\end{equation}
The result (i) follows, and (ii) follows by standard results from regular perturbation theory.
\end{proof}

\

\textbf{Chart $K_1$.} After an additional desingularisation $dt=r_1^{-1}d\bar{t}_1$, we obtain the following in the entry chart $K_1$:
\begin{align}\label{K1Pert}
\begin{array}{lcl}
u_1'=-b_1u_1+d_0\epsilon_1+2u_1\theta_1(u_1,\epsilon_1,r_1)+r_1((b_3+b_2u_1)u_1-d_1\epsilon_1)+\mathcal{O}(r_1^2), \\
\epsilon_1'=3\epsilon_1\theta_1(u_1,\epsilon_1,r_1), \\
r_1'=-r_1\theta(u_1,\epsilon_1,r_1),
\end{array}
\end{align}
where the subscript notation indicates the use of chart-specific coordinates
\[
(x,u,\epsilon)=(-r_1,r_1^2u_1,r_1^3\epsilon_1),
\]
and
\[
\theta_1(u_1,\epsilon_1,r_1)=a_0u_1+r_1(-a_1u_1+c_0\epsilon_1)+\mathcal{O}(r_1^2).
\]
The portion of the equator $S^1$ visible in chart $K_1$ can be identified with the invariant line $\{(u_1,0,0)|u_1\in\mathbb{R}\}$, along which we identify the partially hyperbolic singularity $p_a=(0,0,0)$, and hyperbolic saddle $q_{in}=(b_1/2a_0,0,0)$ shown in Figure \ref{RegBU1Fig}. Our main task here is to understand the dynamics near $p_a$, so we restrict our analysis to the set
\[
D_1:=\big{\{}(u_1,\epsilon_1,r_1)\big|(\epsilon_1,r_1)\in[0,\delta]\times[0,\rho] \big{\}}.
\]
The situation is sketched in Figure \ref{K1RCPFig}, which shows all the relevant objects in the analysis near $p_a$.

\begin{figure}[t]
\captionsetup{format=plain}
\hspace{0em}\centerline{\includegraphics[trim={0 1.5cm 0 1.5cm},scale=.4]{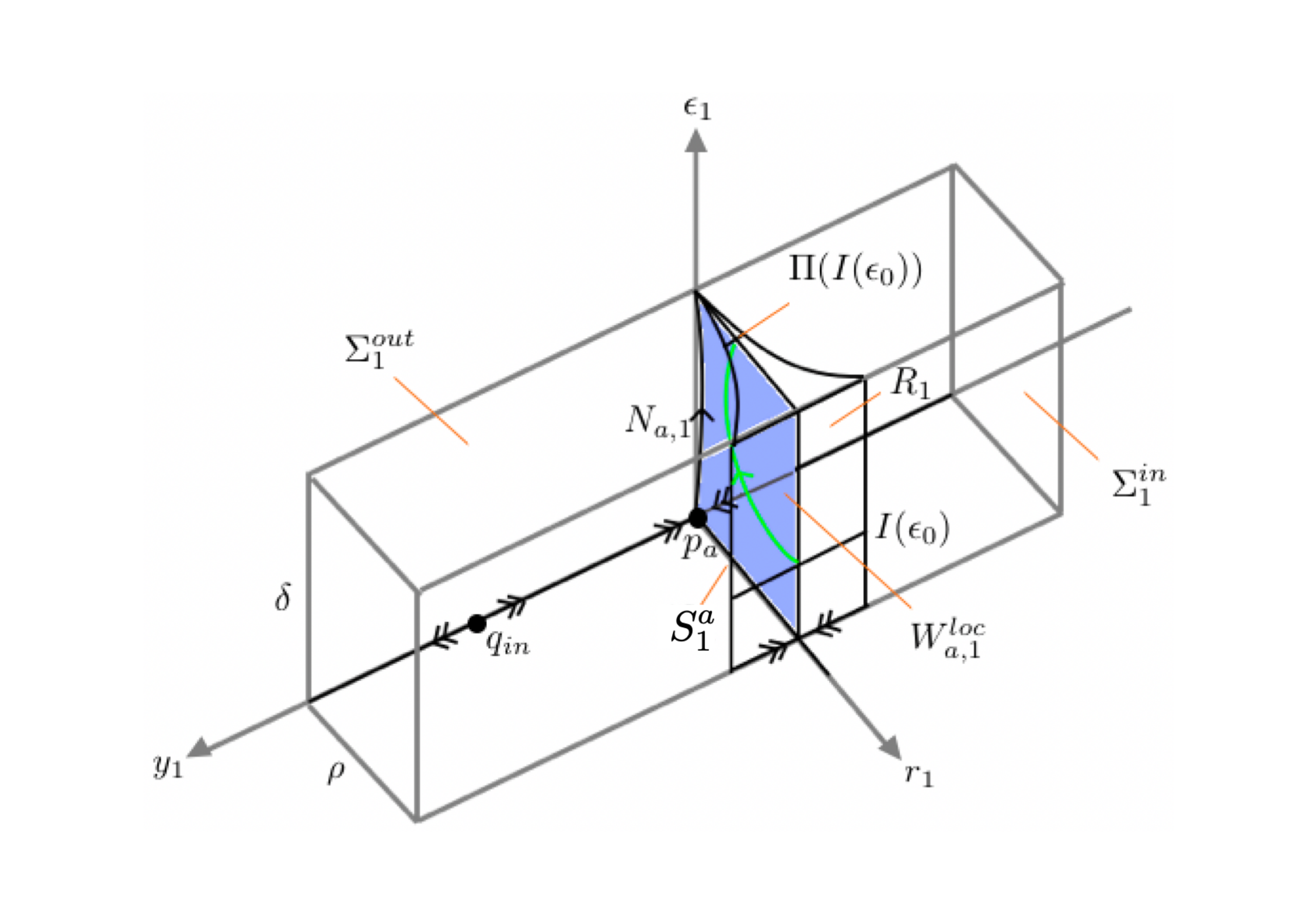}}
\caption{Setup for the local analysis near $p_a$.}\label{K1RCPFig}
\end{figure}

\begin{prop}\label{K1Prop1} (cf. Proposition 2.6 in \cite{Krupa2001a}).
Given $\delta, \rho$ sufficiently small, we have the following:
\begin{itemize}
\item[(i)] There exists a locally invariant, two-dimensional attracting centre manifold $W_{a,1}^{loc}$, which is tangent to
\[
E^c=span\{(0,0,1)^T,(d_0,b_1,0)^T\}
\]
at $p_a$ and given by a graph $u_1=g(\epsilon_1,r_1)$.
\item[(ii)] $W_{a,1}^{loc}$ contains two locally invariant, one-dimensional centre manifolds as restrictions
\[
N^a_{1}=W_{a,1}^{loc}\big|_{r_1=0} \qquad \text{and} \qquad S_1^a=W_{a,1}^{loc}\big|_{\epsilon_1=0}.
\]
The manifold $N^a_{1}$ is unique in $D_1$ and \textcolor{black}{coincides} with the image of the special trajectory $\gamma_c$ in chart $K_1$, denoted $\gamma_{c,1}$. The manifold $S^a_{1}$ \textcolor{black}{coincides} with image of the attracting (extended) critical manifold $\bar{S^a}$ in chart $K_1$.
\item[(iii)] There exists a stable invariant foliatation $\mathcal{F}^s$ with base $W_{a,1}^{loc}$ and one-dimensional fibers. Moreover, we have that $\forall c>-b_1$, $\exists \delta,\rho>0$ such that the contraction rate along $\mathcal{F}^s$ during the time interval $[0,T]$ is greater than $e^{cT}$.
\end{itemize}
\end{prop}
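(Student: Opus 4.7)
The plan is a standard center manifold / invariant foliation argument applied to \eqref{K1Pert}, combined with matching information from chart $K_2$ (Proposition \ref{K2Prop}) to identify the invariant one-dimensional manifolds. First I would compute the linearisation of \eqref{K1Pert} at $p_a=(0,0,0)$: since $\theta_1(0,0,0)=0$, the Jacobian reads
\[
J(p_a) = \begin{pmatrix} -b_1 & d_0 & 0 \\ 0 & 0 & 0 \\ 0 & 0 & 0 \end{pmatrix},
\]
whose spectrum is $\{-b_1,0,0\}$ with $b_1>0$. The one-dimensional stable eigenspace is $\mathrm{span}\{(1,0,0)^T\}$, and the two-dimensional centre eigenspace is $E^c=\mathrm{span}\{(0,0,1)^T,(d_0,b_1,0)^T\}$, matching the statement. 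Standard centre manifold theory (see, e.g., \cite{Kuehn2015}) then provides a $C^r$-smooth, locally invariant attracting centre manifold $W^{loc}_{a,1}$ tangent to $E^c$ at $p_a$. Because the stable direction is transverse to $\{u_1 = \text{fixed}\}$ and the $u_1$-equation has the non-degenerate linear part $-b_1 u_1$, $W^{loc}_{a,1}$ can be written as a graph $u_1=g(\epsilon_1,r_1)$, giving (i).

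For (ii), I would exploit the two evident invariant planes: the equations $\epsilon_1' = 3\epsilon_1\theta_1$ and $r_1' = -r_1\theta_1$ make both $\{r_1=0\}$ and $\{\epsilon_1=0\}$ invariant under the flow of \eqref{K1Pert}. Intersecting $W^{loc}_{a,1}$ with these planes yields one-dimensional invariant submanifolds $N^a_1 := W^{loc}_{a,1}|_{r_1=0}$ and $S^a_1 := W^{loc}_{a,1}|_{\epsilon_1=0}$. The manifold $S^a_1$ corresponds to $\epsilon=0$, so by construction of the blow-up it must coincide with the image of $\bar S^a$ in chart $K_1$ (since $\bar S^a$ is the unique locally invariant attracting critical curve passing through $p_a$ in the $\epsilon=0$ slice). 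Identifying $N^a_1$ with $\gamma_{c,1}$ is the substantive step: on $\{r_1=0\}$ the dynamics reduce to a planar system that is equivalent (via $(x_2,u_2)=(-r_1^{-1},u_1/r_1^2)$, i.e.\ the transition between charts $K_1$ and $K_2$) to the limiting Riccati equation \eqref{K2L}. In $K_2$-coordinates the special solution $\gamma_{c,2}$ is singled out by its asymptotic behaviour as $x_2\to -\infty$, namely $\zeta(x_2)\sim -(d_0/b_1)x_2^{-1}$ in \eqref{Ricatti2}; pulling this asymptotic back to $K_1$ via the coordinate change gives $u_1\to (d_0/b_1)\epsilon_1$ as $(\epsilon_1,r_1)\to 0$, which is exactly the tangency condition at $p_a$ of the orbit of \eqref{K1Pert} restricted to $\{r_1=0\}$. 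This singles out one specific orbit into $p_a$ and yields uniqueness of $N^a_1$ in $D_1$.

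For (iii), the existence of a $C^r$ stable invariant foliation $\mathcal F^s$ of a neighbourhood of $p_a$ with base $W^{loc}_{a,1}$ and one-dimensional leaves is the standard stable foliation theorem for normally hyperbolic / partially hyperbolic invariant manifolds (again, e.g.\ \cite{Fenichel1979}). The contraction rate along fibres is governed by the stable eigenvalue $-b_1$: at $p_a$ the linear contraction rate is exactly $e^{-b_1 t}$, and on $D_1$ the nonlinearity and the $r_1$-corrections in the $u_1$-equation contribute perturbations of order $\mathcal{O}(\delta+\rho)$ to the Lyapunov exponent along $\mathcal F^s$. Hence, given $c>-b_1$, choosing $\delta,\rho$ sufficiently small makes the exponent along leaves strictly less than $c$, giving the stated bound on $[0,T]$.

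The main obstacle I anticipate is step (ii), specifically the uniqueness of $N^a_1$ in $D_1$. Centre manifolds are generally only unique up to flat differences, so uniqueness here must be derived from the distinguished asymptotics of the Riccati orbit $\gamma_{c,2}$ in chart $K_2$ and the consistent matching of $K_1$ and $K_2$ coordinates on their overlap; verifying that the asymptotic \eqref{Ricatti2} indeed forces the candidate $N^a_1$ into a single orbit of \eqref{K1Pert} on $\{r_1=0\}$ is the most delicate technical point, whereas existence of $W^{loc}_{a,1}$ and the stable foliation are direct applications of standard invariant manifold theorems.
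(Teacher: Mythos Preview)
Your approach for (i) and (iii) is correct and essentially identical to the paper's: linearise at $p_a$, read off the spectrum $\{-b_1,0,0\}$ and eigenspaces, invoke centre manifold and stable foliation theorems. For (ii) you also correctly use the invariance of $\{r_1=0\}$ and $\{\epsilon_1=0\}$, and your matching of the Riccati asymptotic \eqref{Ricatti2} with the tangency of $N^a_1$ at $p_a$ is exactly what the paper does (it computes $\gamma_{c,1}=(\epsilon_1^{2/3}\zeta(-\epsilon_1^{-1/3}),\epsilon_1,0)=((d_0/b_1)\epsilon_1+\mathcal O(\epsilon_1^2),\epsilon_1,0)$ and checks this agrees with the graph expansion of $W^{loc}_{a,1}|_{r_1=0}$).

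The one point where you diverge from the paper is the uniqueness of $N^a_1$, which you flag as ``the most delicate technical point'' and try to extract from the Riccati asymptotics. This is over-complicated and, as you yourself note, asymptotic (Taylor) matching cannot by itself rule out centre manifolds differing by flat terms. The paper's argument is a one-liner: on $W^{loc}_{a,1}\cap\{r_1=0\}$ one computes $\epsilon_1' = (3a_0d_0/b_1)\epsilon_1^2+\mathcal O(\epsilon_1^3)>0$ for $\epsilon_1\in(0,\delta]$. Since the flow on the centre manifold is overflowing in the $\epsilon_1$-direction, the centre manifold is unique in $D_1$ by the standard criterion (orbits on any centre manifold converge to $p_a$ in backward time and hence all such manifolds coincide). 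This replaces your ``delicate'' step with a sign computation; the rest of your argument stands.
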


\begin{proof}
We prove statements (i) and (ii). Statement (iii) follows from centre manifold theory (see, e.g, \textcolor{black}{\cite{Guckenheimer1983}}). The Jacobian at $p_a$ is given by
\[
J_{p_a}=
\begin{pmatrix}
{-b_1} & {d_0} & {0} \\
{0} & {0} & {0} \\
{0} & {0} & {0}
\end{pmatrix},
\]
which has eigenvalues $-b_1<0$, $0$ and $0$. Centre manifold theory implies the existence of a locally invariant two-dimensional attracting centre manifold $W^{loc}_{a,1}$ given as a graph $u_1=g(\epsilon_1,r_1)$, which is tangent to the centre eigenspace $E^c=span\{(d_0,b_1,0)^T,(0,0,1)^T\}$ at $p_a$. Making a power series ansatz for $g(\epsilon_1,r_1)$ and matching terms gives a local expression for the manifold:
\[
W^{loc}_{a,1}=\bigg{\{}(u_1,\epsilon_1,r_1)\bigg|u_1=g(\epsilon_1,r_1)=\frac{d_0}{b_1}\epsilon_1+\bigg(\frac{b_3d_0-b_1d_1}{b_1^2}\bigg)r_1\epsilon_1-\frac{a_0d_0^2}{b_1^3}\epsilon_1^2+\mathcal{O}(3)\bigg{\}},
\]
where $\mathcal{O}(3):=\mathcal{O}(\epsilon_1^3,\epsilon_1^2r_1,\epsilon_1r_1^2,r_1^3)$. Dynamics on $W_{a,1}^{loc}$ are determined by restricting \eqref{K1Pert}:
\begin{align}\label{K1W}
\begin{array}{lcl}
u_1'=\frac{3a_0d_0^2}{b_1^2}\epsilon_1^2+\mathcal{O}(3), \\
\epsilon_1'=\frac{3a_0d_0}{b_1}\epsilon_1^2+\mathcal{O}(3), \\
r_1'=-\frac{a_0d_0}{b_1}r_1\epsilon_1+\mathcal{O}(3). \\
\end{array}
\end{align}
Restricting to the invariant plane $\{r_1=0\}$ gives
\[
N^a_{1}=\bigg{\{}(u_1,\epsilon_1,0)\bigg|u_1=g(\epsilon_1,0)=\frac{d_0}{b_1}\epsilon_1-\frac{a_0d_0^2}{b_1^3}\epsilon_1^2+\mathcal{O}(\epsilon_1^3)\bigg{\}},
\]
and restricting to the invariant plane $\{\epsilon_1=0\}$ gives $S^a_1=\{(u_1,0,r_1)|u_1=\mathcal{O}(r_1^3)\}$. One can verify that $N^a_{1}$ and $\gamma_{c,1}$ agree near $p_a$ by using equation \eqref{Ricatti2} and the form of the blow-up transformation \eqref{BUMap} to obtain
\[
\gamma_{c,1}=\big(\epsilon_1^{2/3}\zeta(-\epsilon_1^{-1/3}),\epsilon_1,0\big)=\bigg(\frac{d_0}{b_1}\epsilon_1+\mathcal{O}(\epsilon_1^2),\epsilon_1,0\bigg),
\]
in the limit $\epsilon_1\to0$. Uniqueness of $N^a_{1}$ follows from the equation $\epsilon_1'|_{N_{a,1}}>0$.
\end{proof}

Entry and exit segments $\Sigma_1^{in/out}$ shown in Figure \ref{K1RCPFig} are defined by
\[
\Sigma_1^{in}=\{(u_1,\epsilon_1,r_1)|r_1=\rho\}, \qquad \Sigma_1^{out}=\{(u_1,\epsilon_1,r_1)|\epsilon_1=\delta\}.
\]
We also define the rectangle $R_1=\{|u_1|\leq\omega\}\cap\Sigma_1^{in}$ for some sufficiently small $\omega>0$, and let $I(\epsilon_0)=R_1|_{\epsilon_1=\epsilon_0}$ for each fixed $\epsilon_0\in[0,\delta]$ (see Figure \ref{K1RCPFig}). The following result summarises the dynamics near $p_a$.

\begin{prop}\label{K1Prop2} (cf. Proposition 2.8 in \cite{Krupa2001a}).
Given $\delta,\rho,\omega$ sufficiently small, the transition map $\Pi_{1}:\Sigma_1^{in}\to\Sigma_1^{out}$ has the following properties:
\begin{itemize}
\item[(i)] $\Pi_{1}(R_1)$ is wedge-shaped in $\Sigma_1^{out}$.
\item[(ii)] $\forall\epsilon_1\in(0,\delta]$ and $c<b_1$ fixed, $\exists K>0$ such that $\Pi_{1}|_{I(\epsilon_1)}$ is a contraction with contraction rate bounded below by
\[
K\exp \bigg[-\frac{cb_1}{3a_0d_0}\bigg(\frac{1}{\epsilon_1}-\frac{1}{\delta}\bigg)\bigg].
\]
\end{itemize}
\end{prop}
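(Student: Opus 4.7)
The argument will follow the template of Proposition 2.8 in \cite{Krupa2001a} closely, since the chart $K_1$ equations \eqref{K1Pert}--\eqref{K1W} are already in a canonical form for a partially hyperbolic singularity of blown-up fold type. The two ingredients are (a) the explicit reduced flow on the attracting centre manifold $W^{loc}_{a,1}$ provided by Proposition \ref{K1Prop1}(ii), and (b) the exponential contraction along the stable foliation $\mathcal{F}^s$ provided by Proposition \ref{K1Prop1}(iii).

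The first step is to compute the passage time of the flow restricted to $W^{loc}_{a,1}$. Since $\epsilon$ is a conserved parameter of the original system \eqref{Gen1}, the blow-up transformation \eqref{BUMap} gives the exact invariant $r_1^3 \epsilon_1 = \epsilon$ in chart $K_1$. Consequently, a trajectory on $W^{loc}_{a,1}$ starting at $(r_1,\epsilon_1) = (\rho,\epsilon_0) \in \Sigma_1^{in}$ exits through $\Sigma_1^{out}$ at the point $(r_1^{out},\delta)$ with $r_1^{out} = \rho(\epsilon_0/\delta)^{1/3}$, which pins down the base-point component of $\Pi_1$. Integrating the leading-order centre-manifold equation $\epsilon_1' = (3 a_0 d_0/b_1)\,\epsilon_1^2 + \mathcal{O}(\text{higher order})$ from \eqref{K1W} yields the transit time
\[
T(\epsilon_0) \;=\; \frac{b_1}{3 a_0 d_0}\!\left(\frac{1}{\epsilon_0} - \frac{1}{\delta}\right) + \mathcal{O}(1),
\]
where the $\mathcal{O}(1)$ term is uniform for $\epsilon_0 \in (0,\delta]$ provided $\delta,\rho$ are chosen sufficiently small.

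The second step is to transfer the transit time estimate to a contraction estimate via the stable foliation. Every point $q \in I(\epsilon_0)$ lies on a single fiber of $\mathcal{F}^s$ with base $\pi(q) \in W^{loc}_{a,1} \cap \Sigma_1^{in}$; under the flow this base point is carried along $W^{loc}_{a,1}$ to $\Pi_1(\pi(q))$ in time $T(\epsilon_0)$. Given any $c < b_1$ in the statement, I set $c^\ast = -c > -b_1$ and apply Proposition \ref{K1Prop1}(iii) to conclude that the distance from $\Pi_1(q)$ to $\Pi_1(\pi(q))$ is controlled by $e^{c^\ast T(\epsilon_0)}$, which after substitution and absorption of the $\mathcal{O}(1)$ remainder into a constant $K>0$ gives
\[
\exp\!\left[-\frac{c\, b_1}{3 a_0 d_0}\!\left(\frac{1}{\epsilon_0} - \frac{1}{\delta}\right)\right],
\]
establishing (ii). The wedge statement (i) then follows: $\Pi_1(R_1)$ is a thin $\mathcal{F}^s$-tubular neighbourhood of the smooth arc $\Pi_1(W^{loc}_{a,1} \cap R_1) \subset \Sigma_1^{out}$, which is parameterised by $\epsilon_0 \in [0,\delta]$ with exit $r_1^{out} = \rho(\epsilon_0/\delta)^{1/3}$, and whose cross-sectional width at parameter $\epsilon_0$ decays super-algebraically as $\epsilon_0 \to 0^+$ by the estimate just established. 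The arc accumulates on the hyperbolic saddle $q_{out}$ on $S^1$ as $\epsilon_0 \to 0^+$, producing the wedge tip.

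The main technical obstacle is obtaining a \emph{uniform} exponential contraction rate along trajectories whose transit time $T(\epsilon_0)$ diverges as $\epsilon_0 \to 0^+$, since $p_a$ is only partially hyperbolic and the ambient stable direction degenerates precisely at $p_a$. This is handled by fixing $c^\ast$ strictly greater than $-b_1$ (equivalently $c$ strictly less than $b_1$) and then shrinking $\omega,\rho,\delta$ so that Proposition~\ref{K1Prop1}(iii) applies uniformly on the relevant neighbourhood of $p_a$, in the spirit of the Gronwall/Fenichel bundle estimates of \cite{Krupa2001a}. Non-uniqueness of $W^{loc}_{a,1}$ (and of $\mathcal{F}^s$) contributes only an $\mathcal{O}(e^{-K/\epsilon})$ ambiguity by Theorem~\ref{FenThm}(F4), which is negligible compared to the algebraic error already carried in $T(\epsilon_0)$.
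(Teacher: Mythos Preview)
Your argument is essentially the same as the paper's: compute the transit time by integrating the leading-order $\epsilon_1'$ equation on $W^{loc}_{a,1}$, then combine with the foliation contraction from Proposition~\ref{K1Prop1}(iii) to obtain both assertions. The paper's proof is in fact terser than yours, recording only the transit-time estimate \eqref{TransTime} and then citing Proposition~\ref{K1Prop1}.

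One small geometric slip: the wedge tip in $\Sigma_1^{out}$ is \emph{not} at $q_{out}$. In chart $K_1$ the visible equatorial saddle is $q_{in}$, and in any case the limit $\epsilon_0\to 0^+$ sends the base curve to the point $N_1^a\cap\Sigma_1^{out}=\gamma_{c,1}\cap\{\epsilon_1=\delta\}$, which lies on the sphere at $r_1=0$ with $\epsilon_1=\delta>0$, hence off the equator. This does not affect the estimates, only your description of where the wedge closes.
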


\begin{proof}
We need an expression for the transition time $T$ taken for solutions to pass from $\Sigma_1^{in}$ to $\Sigma_1^{out}$ (cf. Lemma 2.7 in \cite{Krupa2001a}). Direct integration of the equation for $\epsilon_1'$ in \eqref{K1W} gives the leading order estimate
\begin{equation}\label{TransTime}
T\sim\frac{b_1}{3a_0d_0}\bigg(\frac{1}{\epsilon_1}-\frac{1}{\delta}\bigg).
\end{equation}
Statements (i) and (ii) follow from the expression \eqref{TransTime} and Proposition \ref{K1Prop1}.
\end{proof}

\

\textbf{Chart $K_3$.} After an additional desingularisation $dt=r_3^{-1}d\bar{t}_3$, we obtain the following in exit chart $K_3$:
\begin{align}\label{K3Pert}
\begin{array}{lcl}
u_3'=b_1u_3+d_0\epsilon_3-2u_3\theta_3(u_3,\epsilon_3,r_3)+r_3((b_3+b_2u_3)u_3+d_1\epsilon_3)+\mathcal{O}(r_3^2), \\
\epsilon_3'=-3\epsilon_3\theta_3(u_3,\epsilon_3,r_3), \\
r_3'=r_3\theta_3(u_3,\epsilon_3,r_3),
\end{array}
\end{align}
where the subscript notation indicates the use of chart-specific coordinates
\[
(x,u,\epsilon)=(r_3,r_3^2u_3,r_3^3\epsilon_3),
\]
and
\[
\theta_3(u_3,\epsilon_3,r_3)=a_0u_3+r_3(a_1u_3+c_0\epsilon_3)+\mathcal{O}(r_3^2).
\]
The portion of the equator $S^1$ visible in chart $K_3$ can be identified with the invariant line $\{(u_3,0,0)|u_3\in\mathbb{R}\}$, along which we identify the partially hyperbolic singularity $p_r=(0,0,0)$, and hyperbolic saddle $q_{out}=(b_1/2a_0,0,0)$ shown in Figure \ref{RegBU1Fig}. We are interested in the manner by which solutions leave the neighbourhood of $S^2_+$ near $q_{out}$. Accordingly, we restrict to the set
\[
D_3:=\big{\{}(u_3,\epsilon_3,r_3)\big|(\epsilon_3,r_3)\in[0,\delta]\times[0,\rho] \big{\}},
\]
and define the sections
\[
\Sigma_3^{in}=\big{\{}(u_3,r_3,\epsilon_3)\big|r_3\in[0,\rho], \epsilon_3=\delta\big{\}}, \qquad \Sigma_3^{out}=\big{\{}(u_3,r_3,\epsilon_3)\big|r_3=\rho, \epsilon_3\in[0,\delta]\big{\}}.
\]

\begin{prop}\label{K3Prop} (cf. Propostion 2.11 in \cite{Krupa2001a}). The transition map $\Pi_3:\Sigma_3^{in}\to\Sigma_3^{out}$ has form
\[
\Pi_3(u_3,\delta,r_3)=\bigg(\Pi_{31}(u_3,\delta,r_3),\delta\bigg(\frac{r_3}{\rho}\bigg)^3,\rho\bigg)^T,
\]
where
\begin{equation}\label{K3TransU}
\Pi_{31}(u_3,\delta,r_3)\sim\frac{b_1}{2a_0}+\bigg(u_3-\frac{b_1}{2a_0}\bigg)\bigg(\frac{r_3}{\rho}\bigg)^2.
\end{equation}
In particular, if $q=\gamma_{c,3}\cap\Sigma_3^{in}$, then
\begin{equation}\label{K3TransG}
\Pi_{31}(q)\sim\frac{b_1}{2a_0}+\Omega_0\bigg(\frac{2d_0^2}{a_0b_1}\bigg)^{1/3}\bigg(\frac{r_3}{\rho}\bigg)^2\delta^{2/3}.
\end{equation}
\end{prop}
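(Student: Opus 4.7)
The plan is to extract $\Pi_3$ from two structural features of system \eqref{K3Pert}: a conservation law inherited from $\epsilon'=0$ in \eqref{ExpReg}, and the hyperbolic saddle structure at $q_{out} = (b_1/(2a_0),0,0)$. Direct computation gives
\[
\frac{d}{d\bar{t}_3}(r_3^3 \epsilon_3) = 3r_3^3 \epsilon_3 \theta_3 + r_3^3(-3\epsilon_3 \theta_3) = 0,
\]
which is the blow-up image of the invariance $\epsilon = \mathrm{const}$. An orbit entering $\Sigma_3^{in}$ at $(u_3,\delta,r_3)$ therefore exits $\Sigma_3^{out}$ at $r_3=\rho$ with $\epsilon_3^{\mathrm{exit}} = \delta(r_3/\rho)^3$, determining the second and third components of $\Pi_3$ with no further work.

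For the $u_3$-component, I would linearise \eqref{K3Pert} at $q_{out}$ and find (by the same kind of Jacobian computation performed for $p_0$ in the application at the end of Section \ref{LayerSec}) eigenvalues $-b_1$, $-3b_1/2$, $b_1/2$ along the $u_3$, $\epsilon_3$, $r_3$ directions respectively. Introducing $v_3 := u_3 - b_1/(2a_0)$ and using the conservation law to restrict to a fixed $\epsilon$-level set reduces the problem to a planar flow in $(v_3,r_3)$ with leading-order behaviour $v_3' \sim -b_1 v_3$, $r_3' \sim (b_1/2) r_3$. Eliminating time yields $v_3 r_3^2 \sim \mathrm{const}$, giving exactly \eqref{K3TransU} to leading order. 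Formula \eqref{K3TransG} then follows by matching with chart $K_2$: the coordinate change between the two charts reads $u_3 = u_2/x_2^2$ with $\epsilon_3 = x_2^{-3}$, so $\Sigma_3^{in} = \{\epsilon_3 = \delta\}$ corresponds to $x_2 = \delta^{-1/3}$. Substituting this value into the asymptotic expansion of $\zeta(x_2)$ supplied by Proposition \ref{K2Prop}(i) gives the $u_3$-coordinate of $\gamma_{c,3}\cap\Sigma_3^{in}$ as $b_1/(2a_0) + \Omega_0 (2d_0^2/(a_0 b_1))^{1/3}\delta^{2/3} + \mathcal{O}(\delta)$, and applying \eqref{K3TransU} produces \eqref{K3TransG}.

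The main obstacle is the rigorous treatment of the linearisation at $q_{out}$: the eigenvalues $(-b_1,-3b_1/2,b_1/2)$ satisfy the resonance $-3b_1/2 = 2(-b_1) + (b_1/2)$, so a $C^\infty$ linearisation via Sternberg's theorem is not automatic. One must either invoke a $C^1$ linearisation (which suffices for the stated leading-order asymptotics) or perform a normal-form computation verifying that resonant correction terms do not spoil the $(r_3/\rho)^2$ scaling. The conservation law mitigates this considerably by effectively reducing the dimension of the transition-map analysis to two along each $\epsilon$-level surface, so standard Shilnikov-type estimates along the single unstable direction apply directly; the remaining bookkeeping on error terms is essentially identical to that carried out for the analogous standard-form proposition in \cite{Krupa2001a}.
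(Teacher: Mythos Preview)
Your proposal is correct and lands on the same leading-order formulas as the paper, but the tactical route differs in one instructive way. You organise the argument around the conservation law $r_3^3\epsilon_3=\epsilon$ together with the Jacobian eigenvalues $(-b_1,-3b_1/2,b_1/2)$ at $q_{out}$, and you correctly flag the resonance $-3b_1/2=2(-b_1)+b_1/2$ as the obstruction to a smooth linearisation, proposing $C^1$ linearisation or a normal-form check to close the argument. The paper sidesteps the linearisation issue entirely: it rescales time by $d\bar t_3=\theta_3^{-1}d\tilde t$, which turns the $\epsilon_3$ and $r_3$ equations into \emph{exact} linear equations $\epsilon_3'=-3\epsilon_3$, $r_3'=r_3$ (your conservation law falls out immediately), reads off the transition time $T=\ln(\rho/r_3)$, and then approximates the remaining scalar equation $u_3'\sim b_1/a_0-2u_3$ near $q_{out}$, whose explicit solution evaluated at $T$ gives \eqref{K3TransU} directly. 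Your conservation-law reduction and the paper's time-rescaling are two faces of the same structure; the paper's version has the advantage that it never invokes a linearisation theorem at all, so the resonance is mentioned only to explain why this direct route is taken. Your derivation of \eqref{K3TransG} via the chart change $u_3=u_2/x_2^2$, $\epsilon_3=x_2^{-3}$ and the Riccati asymptotics is exactly what the paper does.
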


\begin{proof}
Although $q_{out}$ is hyperbolic, it has a resonance preventing a local transformation into the leading order linear system. An explicit solution for the leading order dynamics near $q_{out}$ can be obtained directly from \eqref{K3Pert}, though. Since $\theta_3(u_3,\epsilon_3,r_3)\sim b_1/2>0$ near $q_{out}$ we can rescale $d\bar{t}_3=(\theta_3(u_3,\epsilon_3,r_3))^{-1}d\tilde{t}$, obtaining
\begin{align}\label{K3Pert2}
\begin{array}{lcl}
u_3'=\frac{b_1u_3+d_0\epsilon_3}{\theta_3(u_3,\epsilon_3,r_3)}-2u_3, \\
\epsilon_3'=-3\epsilon_3, \\
r_3'=r_3,
\end{array}
\end{align}
where the dash notation now denotes differentiation with respect to $\tilde t_3$. The time $T$ taken for solutions of \eqref{K3Pert2} to travel from $\Sigma_{3}^{in}$ to $\Sigma_3^{out}$ can be determined from the equation for $\epsilon_3'$. Direct integration gives
\[
T=\ln \bigg(\frac{\rho}{r_3}\bigg),
\]
and the equation for $\Pi_{31}(u_3,\delta,r_3)$ is determined by expanding the right hand side of the expression for $u_3'$ near $q_{out}$, which gives
\[
u_3'\sim\frac{b_1}{a_0}-2u_3.
\]
Integrating and evaluating at $T$ yields equation \eqref{K3TransU}. Finally, expression \eqref{K3TransG} follows from the form for the image of $\gamma_c$ in chart $K_3$, given by
\[
\gamma_{c,3}=\big(\epsilon_3^{2/3}\zeta(\epsilon_3^{-1/3}),\epsilon_3,0\big)=\bigg(\frac{b_1}{2a_0}+\Omega_0\bigg(\frac{2d_0^2}{a_0b_1}\bigg)^{1/3}\epsilon_3^{2/3}+\mathcal{O}(\epsilon_3),\epsilon_3,0\bigg).
\]
\end{proof}

\begin{rem}
One can apply center manifold theory to prove the existence of a locally invariant, two-dimensional repelling center manifold $W_{r,3}^{loc}$ tangent to the center subspace at $p_r$, and appeal to similar arguments to those provided in the preceding proof to understand the leading order dynamics near $q_{in}$. We omit the details, since the dynamics near $p_r$ and $q_{in}$ are not of primary importance here.
\end{rem}

\

{\bf Proof of Theorem \ref{RCPThm}}. The proof follows conceptually the same arguments as those given in Section 2.8 of \cite{Krupa2001a}. We denote the transition maps between charts $K_i$ and $K_j$ by $\kappa_{ij}$, and note that smoothness of the maps $\kappa_{ij}$ follows from the fact that $S^2$ is a manifold.

\begin{proof}
We need to track $W_{a,1}^{loc}$ and $R_1$ under the flow. Define a map $\Pi:\Sigma_1^{in}\to\Sigma_3^{out}$ by the composition
\[
\Pi=\Pi_3\circ\kappa_{23}\circ\Pi_2\circ\kappa_{12}\circ\Pi_1.
\]
The map $\pi:\Sigma_{in}\to\Sigma_{out}$ from the statement of Theorem \ref{RCPThm} is given by blow-down $\pi=\Phi\circ\Pi\circ\Phi^{-1}$, for $\epsilon>0$.

By Proposition \ref{K1Prop2} and smoothness of the transition maps $\kappa_{ij}$, the image $\kappa_{12}\circ\Pi_1(R_1\cap W_{a,1}^{loc})$ is a smooth curve in $\Sigma_2^{in}$, transverse to $\{r_2=0\}$. Since $\kappa_{12}\circ\Pi_1(W_{a,1}^{loc}\cap\{r_1=0\})=\gamma_{c,2}\cap \Sigma_2^{in}$, it follows from Proposition \ref{K2Prop} that $\Pi_2\circ\kappa_{12}\circ\Pi_1(W_{a,1}^{loc}\cap R_1)$ is a smooth curve in $\Sigma_2^{out}$ of form
\[
\{(\delta^{-1/3},s_2^{out}(r_2),r_2)|r_2\in[0,\delta^{1/3}\rho]\},
\]
where $s_2^{out}:[0,\delta^{1/3}\rho]\to\mathbb{R}$ is a smooth function such that $\{s_2^{out}(0)\}=\gamma_{c,2}\cap\Sigma_2^{out}$. In particular, this curve is transverse to $\{r_2=0\}$, and so the curve $\kappa_{23}\circ\Pi_2\circ\kappa_{12}\circ\Pi_1(W_{a,1}^{loc}\cap R_1)$ is transverse to $\{r_3=0\}$ in $\Sigma_3^{in}$. Proposition \ref{K3Prop} implies that the image of the curve under $\Pi_3$ is of form
\[
\{(s_3^{out}(\epsilon_3),\epsilon_3,\rho)|\epsilon_3\in[0,\delta]\},
\]
with
\[
s_3^{out}(\epsilon_3)=\frac{b_1}{2a_0}+\mathcal{O}(\epsilon_3^{2/3}).
\]
Hence after applying the blow-down transformation we obtain
\[
\vert u_{s}-u_{l}\vert=\mathcal{O}(\epsilon^{2/3}) \qquad \implies \qquad  \vert M(\rho,u_s)-M(\rho,u_l)\vert=\vert y_s-y_l\vert=\mathcal{O}(\epsilon^{2/3}),
\]
where $u_s$, $u_l$ denote the $u$-coordinate of the intersections $S^a_\epsilon\cap\Sigma_{out}$, $\mathcal{F}\cap\Sigma_{out}$ respectively, and we have used the fact that $M(\rho,u)=\mathcal{O}(u)$. This proves assertion (1) in Theorem \ref{RCPThm}. Assertion (2) follows exactly as in \cite{Krupa2001a}, only with Propositions \ref{K1Prop2} and \ref{K3Prop} in place of Propositions 2.8 and 2.11 respectively in the cited work.
\end{proof}

\begin{rem}
Strictly speaking, one must also verify that the flow is regular across $\{(\bar{x},\bar{u},\bar{\epsilon})|\bar{x}=0,\bar{\epsilon}\geq0\}\times[0,\rho]$. This can be done in additional charts $\bar{u}=\pm1$.
\end{rem}

\begin{figure}[t]
\captionsetup{format=plain}
\hspace{0em}\centerline{\includegraphics[trim={0 2cm 0 2cm},scale=.4]{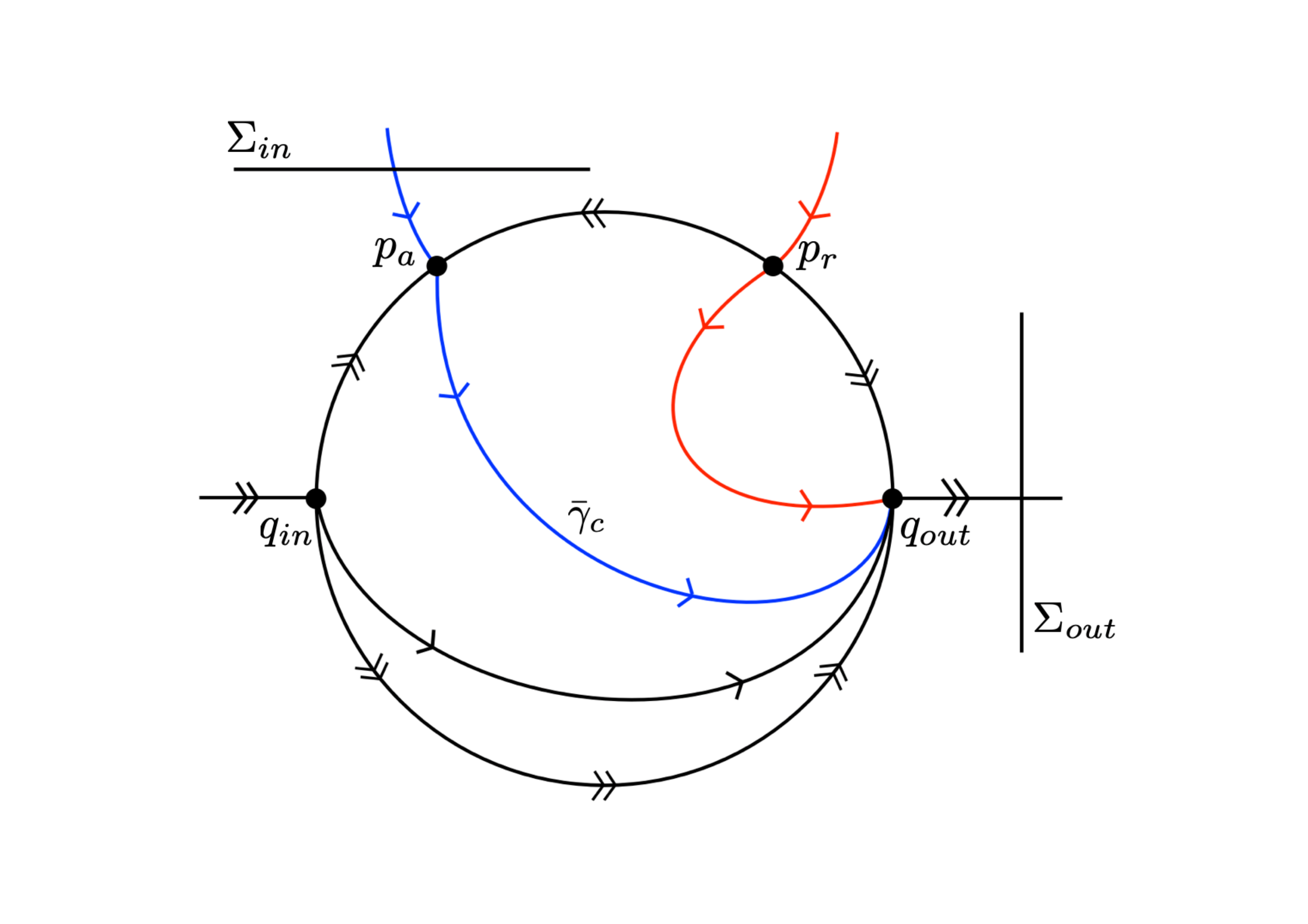}}
\caption{Regular fold point dynamics on $S^2_+$, as studied in \cite{Krupa2001a}. Compare with Figure \ref{RegBU1Fig}.}\label{JumpCompFig}
\end{figure}

\begin{rem}
In \cite{Krupa2001a}, the authors identify a Ricatti equation in chart $K_2$ of the form
\begin{align}\label{KSRicatti}
\begin{array}{lcl}
\tilde{x}_2'=\tilde{x}_2^2-\tilde{y}_2, \\
\tilde{y}_2'=-1,
\end{array}
\end{align}
in the limit $\tilde{r}_2\to0$, where we have used the tilde notation to distinguish their coordinates from ours. This system is related to the Ricatti equation in \eqref{K2L} by the coordinate transformation
\[
(\tilde{x}_2,\tilde{u}_2)=(\tilde{x}_2,\tilde{x}_2^2-\tilde{y}_2)
\]
followed by
\[
\big(x_2,u_2,t_2\big)=(\alpha^{-1}\tilde{x}_2,\beta^{-1}\tilde{u}_2,\gamma^{-1}\tilde{t}_2),
\]
where
\[
\alpha=\bigg(\frac{b_1^2}{4a_0d_0}\bigg)^{1/3}, \qquad
\beta=\bigg(\frac{a_0b_1}{2d_0^2}\bigg)^{1/3}, \qquad
\gamma=\bigg(\frac{a_0b_1d_0}{2}\bigg)^{1/3}.
\]
The dynamics in the blow-up observed in \cite{Krupa2001a} are sketched in \ref{JumpCompFig}, which should be compared with Figure \ref{RegBU1Fig}. Note that the segments chosen for the statement and proof of Theorem \ref{RCPThm} are not the same as those in \cite{Krupa2001a}.
\end{rem}

\end{appendix}

\end{document}